\DeclareMathOperator{\ann}{ann}
\DeclareMathOperator{\Ann}{Ann}
\DeclareMathOperator{\pt}{pt}
\DeclareMathOperator{\mx}{Max}
\DeclareMathOperator{\Hom}{Hom}
\DeclareMathOperator{\End}{End}
\newcommand{\V}{\mathcal{V}}
\newcommand{\U}{\mathcal{U}}
\newcommand{\ltc}{\mathcal{L}} 
\theoremstyle{plain}
\newtheorem*{theorem*}{Theorem}
\newtheorem{thm}{Theorem}[section]
\newtheorem{cor}[thm]{Corollary}
\newtheorem{lem}[thm]{Lemma}
\newtheorem{prop}[thm]{Proposition}
\theoremstyle{definition}
\newtheorem{dfn}[thm]{Definition}
\newtheorem{obs}[thm]{Remark}
\newtheorem{ej}[thm]{Example}
\newcommand{\sm}{\sigma[M]}
\begin{document}

\title[On the de Morgan's laws for modules
]{On the de Morgan's laws for modules}


\author[Medina,\\Sandoval,\\ Zald\'ivar]{Mauricio Medina-B\'arcenas\\ Martha Lizbeth Shaid Sandoval-Miranda\\ \'Angel Zald\'ivar-Corichi }

\begin{abstract} In this investigation we give a module-theoretic counterpart of the well known De Morgan's laws for rings and topological spaces. We observe that the module-theoretic De Morgan's laws are related with semiprime modules and modules in which the annihilator of any fully invariant submodule is a direct summand. Also, we give a general treatment of De Morgan's laws for ordered structures (idiomatic-quantales). At the end, the manuscript goes back to the ring theoretic realm, in this case we study the non-commutative counterpart of Dedekind domains, and we describe Asano prime rings using the strong De Morgan law.
\end{abstract}

\maketitle 

\begin{flushleft}
\textit{2010 Mathematics Subject Classification:} 3G10, 06C99, 16D10, 16D90, 16P40.

\textit{Key Words and phrases:} De Morgan's laws, idiom, quantale, semiprime modules, FI-Baer, Asano ring, projective module, prime spectrum, regular frame.
\end{flushleft}

\section{Introduction}\label{sec1}
The uses of ordered structures in the study of algebraic entities such as rings, categories and topological spaces goes back to \cite{ward1939residuated} and \cite{dowker1966quotient}. For example
frames (locales, complete Heyting algebras) can be seen as  algebraic ordered structures of topological spaces. In fact,  a decent analysis of topological spaces can be done via frame theory \cite{johnstone1986stone,picado2012frames}. Also frames can be understood as algebraic models of infinitary logic (every complete Boolean algebra is a frame).
In the ring theoretic side, the theory of quantales \cite{mulvey1986suppl} offers an amalgam to the study of rings via the quantale of (left, right) ideals, see for example \cite{rosenthal1990quantales,niefield1985note}.

Like in the case of topological spaces,  the behavior of a module can be done via its lattice of submodules as it has been explored in \cite{albu2014topics,albu2004modular,albu2014osofsky,simmonsvarious,castro2019some}. 

In \cite{BicanPr}, the authors introduce a product of submodules of a given module. Under some projectivity conditions, this product behaves well with the ordered structure in the set of submodules and the lattice of submodules becomes a quantale. The study of this quantale can be found in the  authors recent papers \cite{mauquantale,medina2018attaching,medina2018strongly}.

As we mentioned before, the connection with certain aspects of logic cannot be avoided. For example, in \cite{johnstone1979conditions}, the author shows (as a consequence of a more general result) that, for a topological space $S$, its topology $\mathcal{O}(S)$ satisfies the De Morgan's Law, that is, 
\[\neg(A\wedge B)=\neg A\vee \neg B\]
if and only if $S$ is extremely disconnected. The frame $\mathcal{O}(S)$ satisfies 
\[(A\Rightarrow B)\vee(B\Rightarrow A)=S\]
if and only if every closed subspace of $S$ is extremely disconnected (where $\Rightarrow$ denotes the implication and $\neg$ the negation in the frame $\mathcal{O}(S)$). 

In \cite{niefield1985strong,niefield1985note,niefield1995algebraic}, it is given an explanation of the De Morgan's Laws in the ring-theoretic setting and the similarities with those characterizations obtained in the topological case.

The aim of the present manuscript is to answer the natural question: {\it What characterizations of modules can be obtained in the presence of the De Morgan's laws on the quantale of submodules?} We give answers to this question and other related topics.

In \cite[Theorem 2.4]{niefield1985strong} a commutative Noetherian Dedekind domain is characterized as a Noetherian domain satisfying the strong algebraic de Morgan's law, that is, given a commutative Noetherian domain $R$, $R$ is a Dedekind domain if and only if $R=(I:J)+(J:I)$ for all ideals $I,J$ if and only if the lattice of ideals of $R_P$ is totally ordered for every prime ideal $P$ of $R$, where $R_P$ denotes the localization at $P$. It is the last condition which limits the generalization of this result to modules or to noncommutative rings. One goal in this manuscript is to generalize the previous result in the noncommutative case. We are interested in which (noncommutative) rings are characterized by the strong algebraic de Morgan's law. Using the techniques of localization in noncommutative Noetherian rings we are able to prove that, under some circumstances, the strong algebraic de Morgan's law characterize Noetherian Asano prime rings (Theorem \ref{asano}). Recall that a prime Goldie ring is said to be \emph{Asano prime ring} if every nonzero ideal is invertible \cite[5.2.6]{mcconnell2001noncommutative}.

A brief description of the present manuscript is as follows:
Section \ref{sec2} is the background material needed to make this manuscript as self-contained as possible. In Section \ref{DML},  we give some results related to  De Morgan's Laws for {\it idiomatic-quantales} 
such that $ab\leq a\wedge b$ holds for all $a, b\in A.$ Main results in this section are Theorem \ref{cuasiani} and Theorem \ref{cuasibaer}, as an
analogous  to \cite[Proposition 4.3.1]{rosenthal1990quantales} given by  P.T. Jonhstone (see also \cite{johnstone1986stone}).  In Section \ref{sec 5}, we study 
the point-free and the point sensitive conditions for modules satisfying the De Morgan's laws (see Theorem \ref{Baer1}), and we achieve a module theoretic counterpart of some results presented in \cite{niefield1995algebraic}. Following this way, in Section \ref{sec 6}, we study properties of $(N:L)$ for fully invariant submodules of a module $M,$ we study the strong algebraic De Morgan's law, and finally, in Theorem \ref{asano} is given a characterization for a Noetherian prime ring  to be an Asano prime ring.
By last,  it is worth to mention that this manuscript is a natural step in our investigation, that was initiated in \cite{mauquantale} followed by \cite{medina2018attaching} and \cite{medina2018strongly}.




\section{Preliminaries.}\label{sec2}
Throughout this paper $R$ will be an associative ring with identity, not necessarily commutative. The word \emph{ideal} will mean two-sided ideal, unless explicitly stated the side (left or right ideal). Module will mean an unital left $R$-module. Let $M$ be an $R$-module, a submodule $N$ of $M$ is denoted by $N\leq M$, whereas we write $N<M$ when $N$ is a proper submodule of $M$. Recall that $N\leq M$ is called \emph{fully invariant submodule}, denoted by $N\leq_{fi}M$, if for every endomorphism $f\in \End_R(M)$, it follows that $f(N)\subseteq N.$ Denote by 
\[\Lambda(M)=\{N\mid N\leq M\},\, \mbox{ and }\,\Lambda^{fi}(M)=\{N\mid N\leq_{fi}M\}.\]
It is well known that both $\Lambda(M)$ and $\Lambda^{fi}(M)$ are complete lattices. Moreover, they are idioms. Recall that  an {\it idiom} is a complete lattice $(A,\leq,\bigvee,\wedge,1 ,0)$, upper-continuous and modular, that is, $A$ is a complete lattice satisfying the following distributive laws: 
\[a\wedge (\bigvee X)=\bigvee\{a\wedge x\mid x\in X\}\leqno({\rm IDL})\]
holds for all $a\in A$  and $X\subseteq A$ directed, and 
\[a\leq b\Rightarrow (a\vee c)\wedge b=a\vee(c\wedge b)\leqno({\rm ML})\]
for all $a,b,c\in A$. 
A good account of the many uses of these lattices can be found in \cite{simmmons1989near} and \cite{simmons2010decomposition}.

A distinguished class of idioms is the class of those which are distributive :
\begin{dfn}
A complete lattice $(A,\leq,\bigvee,\wedge,1,0)$ is a \emph{frame}, if $A$ satisfies
\[a\wedge (\bigvee X)=\bigvee\{a\wedge x\mid x\in X\}\leqno({\rm FDL}),\]
\noindent for all $a\in A$ and $X\subseteq A$ any subset.
\end{dfn}

Of course the main example of a frame comes from topology.  Given a
space $S$ with topology $\mathcal{O}(S)$, it is known that $\mathcal{O}(S)$ is a frame.

The point-free techniques we are interested in are based on the concept of nucleus. We give a quick review of that.
The reader can see  \cite{johnstone1986stone} and \cite{rosenthal1990quantales} for more details of all these facts.

\begin{dfn}\label{nuc}
Let $A$ be an idiom. A \emph{nucleus} on $A$ is a monotone function $j\colon A\rightarrow A$ such that:
\begin{enumerate}
\item $a\leq j(a)$ for each $a\in A$.

\item $j$ is idempotent.

\item $j$ is a \emph{prenucleus}, that is, $j(a\wedge b)=j(a)\wedge j(b)$.
\end{enumerate}
\end{dfn}
\begin{prop}\label{hullker}
Given any morphism of $\bigvee$-semilattices, $f^{*}\colon A\rightarrow B$ there exists  $f_{*}\colon B\rightarrow  A$ such that 
\[f^{*}(a)\leq b\Leftrightarrow a\leq f_{*}(b),\]
  for each $a\in A$ and $ b\in B.$ 
That is, $f^{*}$ and $f_*$ form an adjunction 
{\vspace{-8pt}}
\[\xymatrix{ A\ar@<-.7ex> @/   _.5pc/[r]_--{f^{*}} & B.\ar@<-.7ex>@/ _.5pc/[l]_--{f_{*}} }\]

In fact, $f_{*}(b)=\bigvee \{x\in A\mid f^{*}(x)\leq b\},\,$ for each $b\in B.$
\end{prop}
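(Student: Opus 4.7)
The statement is a classical adjoint functor theorem for posets (a Galois connection), so the plan is simply to take the formula for $f_*$ as the definition and verify the adjunction directly, using only that $f^*$ preserves arbitrary joins.

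First I would define $f_*\colon B \to A$ by the prescribed formula $f_*(b)=\bigvee\{x\in A\mid f^*(x)\leq b\}$, which makes sense because $A$ is a complete lattice. Monotonicity of $f_*$ is automatic, since enlarging $b$ can only enlarge the set being joined. Note also that $f^*$, being a morphism of $\bigvee$-semilattices, preserves the empty join and thus sends $0$ to $0$ (so the set is never empty); more importantly, $f^*$ is monotone, since $a\leq a'$ gives $a\vee a'=a'$ and hence $f^*(a)\vee f^*(a')=f^*(a')$, i.e.\ $f^*(a)\leq f^*(a')$.

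Next I would prove the two implications of the adjunction separately. For the forward direction, if $f^*(a)\leq b$, then $a$ itself belongs to the set $\{x\in A\mid f^*(x)\leq b\}$, so $a\leq f_*(b)$ by definition of the join. For the reverse direction, suppose $a\leq f_*(b)$. Applying the monotone map $f^*$ and then using that it preserves arbitrary joins gives
\[
f^*(a)\leq f^*(f_*(b))=f^*\Bigl(\bigvee\{x\in A\mid f^*(x)\leq b\}\Bigr)=\bigvee\{f^*(x)\mid f^*(x)\leq b\}\leq b,
\]
where the final inequality holds because every element of the indexed family is bounded above by $b$.

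There is essentially no obstacle here: the only substantive input is join-preservation of $f^*$, which is built into the hypothesis, and the rest is a direct unpacking of the definition of $f_*$. I would finish by remarking that the adjunction determines $f_*$ uniquely (two right adjoints to the same functor agree), so the displayed formula is not just \emph{one} choice but the canonical one.
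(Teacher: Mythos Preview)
Your proof is correct. The argument you give is the standard direct verification for posetal Galois connections: define $f_*$ by the formula, and check both halves of the adjunction using only that $f^*$ preserves arbitrary joins. Each step is sound.

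The paper itself does not supply a proof; it simply observes that this is a special case of the General Adjoint Functor Theorem and cites \cite[Theorem~6.3.10]{leinster2014basic}. Your approach is therefore more elementary and self-contained: you work entirely at the level of complete lattices and avoid invoking the categorical machinery. What the paper's citation buys is brevity and a pointer to the wider context (the result for posets is the $0$-categorical shadow of the adjoint functor theorem); what your direct argument buys is that the reader sees immediately why join-preservation is exactly the hypothesis needed, and that the formula for $f_*$ is forced. Either route is acceptable here.
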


This is a particular case of the General Adjoint Functor Theorem. A proof of this can be found in any standard book of category theory, for instance, \cite[Theorem 6.3.10]{leinster2014basic}.

We need some other point-free structures that generalize idioms and frames.
As a generalization of quantales, the concept of quasi-quantale was introduced in \cite{mauquantale}.

\begin{dfn}
A \emph{(quasi)quantale} $A$ is a complete lattice with an associative product $A\times A\to A$ such that for all (directed) subsets $X,Y\subseteq A$ and $a\in A:$

\centerline{ $\left(\bigvee X\right)a=\bigvee\{xa\mid x\in X\}$}

and

\centerline{$ a\left(\bigvee Y\right)=\bigvee\{ay\mid y\in Y\}.$}
\end{dfn}


\begin{dfn}\label{sqqcond}
Given a subquasi-quantal $B$ of a quasi-quantale $A$, we will say $B$ satisfies the condition $(\star)$ if $0,1\in B$ and $1b,b1\leq b$ for all $b\in B$.
\end{dfn}

\begin{dfn}[\cite{mauquantale}, Definition 3.16]
Let $B$ be a subquasi-quantale of a quasi-quantale $A$. An element $1\neq p\in A$ is a \emph{prime element relative to $B$} if whenever $ab\leq p$ with $a,b\in B$ then $a\leq p$ or $b\leq p$. We define the \emph{spectrum relative to $B$ of $A$} as
\[Spec_B(A)=\{p\in A\mid p\;\text{is prime relative to }B\}.\]
In the case $A=B$ this is the usual definition of prime element. We denote the set of prime elements of $A$ by $Spec(A)$.
\end{dfn}

\begin{prop}[\cite{mauquantale}]\label{t}
Let $B$ be a subquasi-quantale satisfying $(\star)$ of a quasi-quantale $A$. Then $Spec_B(A)$ is a topological space with closed subsets given by
$ \V(b)=\{p\in Spec_B(A)\mid b\leq p\}$ with $b\in{B}$. 
In dual form, the open subsets are of the form
$\U(b)=\{p\in{Spec_B(A)}\mid b\nleq{p}\}$ with $b\in{B}$.
\end{prop}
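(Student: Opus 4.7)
The plan is to verify that the family $\C = \{\V(b) \mid b \in B\}$ satisfies the axioms for the closed sets of a topology on $\spec_B(A)$, after which the description of the open sets follows immediately by complementation. First, condition $(\star)$ gives $0, 1 \in B$: since $0 \leq p$ for every $p \in A$, one has $\V(0) = \spec_B(A)$, and since every prime element satisfies $p \neq 1$, one has $\V(1) = \emptyset$. Hence $\C$ contains the empty set and the whole space.

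For arbitrary intersections, a subquasi-quantale is closed under the joins inherited from $A$, so given $\{b_i\}_{i \in I} \subseteq B$ the join $\bigvee_i b_i$ lies in $B$, and one can compute
\[
\bigcap_{i \in I} \V(b_i) = \bigl\{p \mid b_i \leq p \text{ for all } i \in I\bigr\} = \Bigl\{p \;\Bigm|\; \bigvee_{i \in I} b_i \leq p\Bigr\} = \V\!\bigl(\textstyle\bigvee_{i \in I} b_i\bigr),
\]
which belongs to $\C$.

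The main --- though modest --- step is to establish the identity $\V(a) \cup \V(b) = \V(ab)$ for $a, b \in B$ (noting $ab \in B$ because $B$ is closed under the product). For the inclusion $\V(a) \cup \V(b) \subseteq \V(ab)$, if $a \leq p$, then $b \leq 1$ together with monotonicity of the product (which follows from its join-preservation) gives $ab \leq a \cdot 1$, and $(\star)$ upgrades this to $ab \leq a \leq p$; the case $b \leq p$ is symmetric via $1 \cdot b \leq b$. For the converse $\V(ab) \subseteq \V(a) \cup \V(b)$, the inequality $ab \leq p$ with $a, b \in B$ forces $a \leq p$ or $b \leq p$ by the very definition of a prime element relative to $B$. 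The heart of the argument --- and the only place where any real structure is used --- is this last identity, and the principal obstacle is that in a general quasi-quantale the product need not be bounded above by its factors; it is precisely condition $(\star)$, the sub-unitality of $1 \in B$, that supplies the two bounds $ab \leq a$ and $ab \leq b$ needed for the $\subseteq$ direction.
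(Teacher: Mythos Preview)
The paper does not supply its own proof of this proposition; it is quoted from \cite{mauquantale} and stated without argument, so there is nothing in the present paper to compare your proof against. Your verification is the standard one and is correct: condition $(\star)$ gives $\V(0)=\spec_B(A)$ and $\V(1)=\emptyset$; closure of $B$ under joins yields $\bigcap_i\V(b_i)=\V(\bigvee_i b_i)$; and the identity $\V(a)\cup\V(b)=\V(ab)$ follows from sub-unitality ($ab\leq a1\leq a$ and $ab\leq 1b\leq b$) together with the defining property of primes relative to $B$.

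One small point worth making explicit: you invoke monotonicity of the product ``which follows from its join-preservation'', but in a \emph{quasi}-quantale the product is only assumed to distribute over \emph{directed} suprema. This is still enough: if $x\leq y$ then $\{x,y\}$ is directed with supremum $y$, so $ay=a(x\vee y)=ax\vee ay$, whence $ax\leq ay$; similarly on the other side. With that clarification your argument is complete.
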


\begin{obs}[\cite{mauquantale}]\label{adjunction}
Let $B$ be a subquasi-quantale satisfying $(\star)$ of a quasi-quantale $A$. Let $\mathcal{O}(Spec_B(A))$ be the frame of open subsets of $Spec_B(A)$. We have an adjunction of $\bigvee$-morphisms 
{\vspace{-5pt}}
\[\xymatrix@=20mm{B\ \ \ar@/^/[r]^{\mathcal{U}} & \mathcal{O}(Spec_B(A))\ \ \ar@/^/[l]^{\mathcal{U}_*}}\]
where $\mathcal{U}_*$ is defined as
$\U_*(W)=\bigvee\{b\in{B}\mid\U(b)\subseteq{W}\}.$
The composition $\mu:=\U_*\circ\U$ is a closure operator in $B$. Note that $\U(x)=\U(\mu(x))$ (equivalently, $\V(x)=\V(\mu(x))$) for all $x\in B$.  
\end{obs}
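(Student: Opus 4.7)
The whole remark is a standard consequence of the adjoint functor machinery of Proposition \ref{hullker} once one verifies that $\mathcal{U}\colon B\to \mathcal{O}(\mathrm{Spec}_B(A))$ is a $\bigvee$-morphism. So the plan splits into three short pieces.

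\textbf{Step 1: $\mathcal{U}$ preserves arbitrary suprema.} Since $B$ is a subquasi-quantale of $A$, it is closed under arbitrary suprema, so $\bigvee X\in B$ for every $X\subseteq B$. By Proposition \ref{t}, $\mathcal{U}(b)=\{p\in \mathrm{Spec}_B(A)\mid b\nleq p\}$. The key point is the tautology
\[\bigvee X\nleq p\iff \exists\,x\in X,\ x\nleq p,\]
which holds because $p$ is an upper bound for $X$ precisely when $\bigvee X\leq p$. This yields
\[\mathcal{U}\!\left(\bigvee X\right)=\bigcup_{x\in X}\mathcal{U}(x),\]
i.e.\ $\mathcal{U}$ sends the join in $B$ to the join in $\mathcal{O}(\mathrm{Spec}_B(A))$; in particular $\mathcal{U}$ is monotone.

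\textbf{Step 2: Existence and formula for the right adjoint.} Having verified that $\mathcal{U}$ is a $\bigvee$-morphism between complete lattices, Proposition \ref{hullker} produces a right adjoint $\mathcal{U}_*\colon \mathcal{O}(\mathrm{Spec}_B(A))\to B$ characterized by
\[\mathcal{U}(b)\subseteq W\iff b\leq \mathcal{U}_*(W),\]
and given by the explicit formula $\mathcal{U}_*(W)=\bigvee\{b\in B\mid \mathcal{U}(b)\subseteq W\}$, which is exactly the stated definition.

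\textbf{Step 3: $\mu:=\mathcal{U}_*\circ\mathcal{U}$ is a closure operator, and $\mathcal{U}=\mathcal{U}\mu$.} Everything follows from the unit/counit of the Galois connection of Step 2, so no new ingredient is needed. Concretely: (i) $\mu$ is monotone as a composite of monotone maps; (ii) taking $W=\mathcal{U}(b)$ in the adjunction gives $b\leq \mathcal{U}_*(\mathcal{U}(b))=\mu(b)$, which is extensiveness; (iii) applying the adjunction to $b=\mathcal{U}_*(W)$ yields the counit inequality $\mathcal{U}(\mathcal{U}_*(W))\subseteq W$. Combining monotonicity with extensiveness and the counit gives, for $x\in B$,
\[\mathcal{U}(x)\subseteq \mathcal{U}(\mu(x))=\mathcal{U}(\mathcal{U}_*(\mathcal{U}(x)))\subseteq \mathcal{U}(x),\]
so $\mathcal{U}(x)=\mathcal{U}(\mu(x))$. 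Applying $\mathcal{U}_*$ to this identity gives $\mu(\mu(x))=\mu(x)$, so $\mu$ is idempotent and therefore a closure operator. The parenthetical equivalence $\mathcal{V}(x)=\mathcal{V}(\mu(x))$ is immediate by complementation in the topological space $\mathrm{Spec}_B(A)$.

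There is no substantive obstacle; the only content is the verification in Step 1 that arbitrary joins in $B$ match unions of the basic opens, and this in turn rests only on the order-theoretic characterization of joins and the closure of $B$ under suprema. Everything else is a direct instance of the well-known fact that a Galois connection between posets induces a closure operator on the domain.
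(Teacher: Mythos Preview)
Your argument is correct and is precisely the standard unfolding of Proposition~\ref{hullker}: verify that $\mathcal{U}$ preserves arbitrary joins, invoke the adjoint functor theorem for $\bigvee$-semilattices, and read off the closure operator and the identity $\mathcal{U}(x)=\mathcal{U}(\mu(x))$ from the unit/counit of the resulting Galois connection. Note, however, that the paper does not supply its own proof of this remark; it is stated as an observation and attributed to \cite{mauquantale}, so there is nothing in the present paper to compare your approach against beyond the general machinery of Proposition~\ref{hullker}, which you use in exactly the intended way.
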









In \cite{BicanPr} is defined a product of modules as follows:

\begin{dfn}\label{pro}
Let $M$ and $K$ be $R$-modules. Let $N\leq M$. The product of $N$ with $K$ is defined as:
\[N_MK=\sum\{f(N)\mid f\in \Hom_R(M,K)\}\]
\end{dfn}

This product generalizes the usual product of an ideal and an $R$-module. For some properties of this product see \cite[Proposition 1.3]{PepeGab}. In particular, we have a product of submodules of a given module. 

\begin{obs}

The condition $(\star)$ of Definition \ref{sqqcond} comes from our canonical example of quasi-quantale $\Lambda(M)$ with $M$ a projective $R$-module in $\sm$ and the canonical subquasi-quantale $\Lambda^{fi}(M)$. Note that $\Lambda^{fi}(M)$ satisfies condition $(\star)$. In general, $\Lambda(M)$ does not satisfies $(\star)$. 
For example, consider $M=\mathbb{Z}_2\oplus\mathbb{Z}_2$, then $\left(\mathbb{Z}_2\oplus 0\right){_MM}=M\nleq\mathbb{Z}_2\oplus 0$.

\end{obs}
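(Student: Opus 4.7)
The plan is to verify the two assertions in the Remark separately: first, that $\Lambda^{fi}(M)$ satisfies condition $(\star)$, and second, that $\Lambda(M)$ fails it, as witnessed by the stated example $M=\mathbb{Z}_2\oplus\mathbb{Z}_2$.

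For the first assertion, the key is to trace through Definition \ref{pro} carefully. Clearly $0, M\in\Lambda^{fi}(M)$. Given $N\leq_{fi}M$, I would unpack $M_M N=\sum\{f(M)\mid f\in\Hom_R(M,N)\}$ and observe that every summand lies in the codomain $N$, so $M_M N\leq N$ (this direction does not even require fully-invariance). For the other inequality, $N_M M=\sum\{f(N)\mid f\in\End_R(M)\}$, and the hypothesis $N\leq_{fi}M$ says precisely that $f(N)\subseteq N$ for every $f\in\End_R(M)$, so the whole sum remains inside $N$.

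For the second assertion, the plan is to produce a concrete failure of $N_M M\leq N$. Take $M=\mathbb{Z}_2\oplus\mathbb{Z}_2$ (as a $\mathbb{Z}$-module, equivalently a $\mathbb{Z}_2$-vector space) and $N=\mathbb{Z}_2\oplus 0$. The swap endomorphism $\tau(a,b)=(b,a)$ witnesses that $N$ is not fully-invariant, and since $N_M M$ contains both $\mathrm{id}_M(N)=N$ and $\tau(N)=0\oplus\mathbb{Z}_2$, we get $N_M M\supseteq N+\tau(N)=M$, i.e.\ $N_M M=M\nleq N$.

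The only subtlety worth flagging is the bookkeeping about which side of the product engages fully-invariance: in $N_M M$ it is $N$ that is pushed forward along endomorphisms of $M$, which is exactly what fully-invariance controls; this explains both why $\Lambda^{fi}(M)$ satisfies $(\star)$ and why dropping fully-invariance (as in the counterexample) produces the failure.
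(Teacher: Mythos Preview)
Your argument is correct and aligns with the paper's approach: the Remark itself contains no proof beyond stating the example, so you have simply (and correctly) supplied the routine verifications the paper leaves to the reader. Your unpacking of $M_MN\leq N$ and $N_MM\leq N$ via Definition~\ref{pro}, and your use of the swap endomorphism to exhibit $(\mathbb{Z}_2\oplus 0)_MM=M$, are exactly the intended justifications.
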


\begin{dfn}[Definition 13 \cite{raggiprime}]
Let $N\leq M$ be a proper fully invariant submodule. $N$ is \emph{prime} in $M$ if whenever $L_MK\leq N$ with $L,K\leq_{fi} M$ then $K\leq N$ or $L\leq N$. We say that $M$ is a prime module if $0$ is prime in $M$.
\end{dfn}

\begin{dfn}[\cite{raggisemiprime}]
Let $N\leq M$ be a proper fully invariant submodule. $N$ is \emph{semiprime} in $M$ if whenever $L_ML\leq N$ with $L\leq_{fi} M$ then $L\leq N$. We say that $M$ is a semiprime module if $0$ is semiprime in $M$.
\end{dfn}

Given modules $M$ and $N$, it is said $N$ is \emph{$M$-generated} if there exists an epimorphism $\rho:M^{(X)}\to N$ for some set $X$, where $M^{(X)}$ denotes the direct sum of copies of $M$ indicated by $X$. The module $N$ is said to be \emph{$M$-subgenerated} if $N$ can be embedded in an $M$-generated module. The full subcategory of $R$-Mod consisting of all $M$-subgenerated modules is denoted by $\sm$. The category $\sm$ is a Grothendieck category and many properties can be found in \cite{wisbauerfoundations}. The product of submodules of a module $M$ is neither associative nor distributive over sums from the right, in general. If we assume that $M$ is projective in $\sigma[M]$ then the product is associative and distributive over sums \cite[Proposition 5.6]{beachy2002m}, \cite[Proposition .1.3]{PepeGab} and \cite[Lemma 2.1]{maustructure}. Moreover if $N,L\leq_{fi}M$ then $N_ML\leq_{fi}M$. See \cite[Remark 4.2]{mauquantale}.





\begin{obs}

In the case $A=\Lambda(M)$ and $B=\Lambda^{fi}(M)$, following \cite{medina2018attaching} we write $LgSpec(M)=Spec_{B}(A)$ and we call it \emph{the large spectrum of $M$} and for $Spec_B(B)$ we just write $Spec(M)$. Note that when $R=M$, $Spec(M)$ is the usual prime spectrum. As it was noticed in \cite[Example 4.14]{mauquantale}, if $M$ is quasi-projective then $\mx(M)\subseteq LgSpec(M)$. Moreover, if $M$ is projective in $\sm$, $\emptyset\neq \mx(M)\subseteq LgSpec(M)$ by \cite[22.3]{wisbauerfoundations}.

\end{obs}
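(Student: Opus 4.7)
The remark contains two assertions to prove: (i) every maximal submodule of a quasi-projective module $M$ lies in $LgSpec(M)$, and (ii) when $M$ is projective in $\sigma[M]$, $\mx(M)$ is nonempty. I would handle them separately, since the first is a lifting argument and the second is an existence statement.

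For (i), let $N\in\mx(M)$ and suppose $L_MK\leq N$ with $L,K\leq_{fi}M$. I would argue by contradiction: assume $K\not\leq N$. Since $N$ is maximal, $K+N=M$, so the composition $K\hookrightarrow M\twoheadrightarrow M/N$ is an epimorphism. Using the quasi-projectivity of $M$, the canonical projection $\pi\colon M\to M/N$ lifts through this epimorphism: there exists $f\in\Hom_R(M,K)$ such that $\pi(f(m))=\pi(m)$, i.e.\ $m-f(m)\in N$ for every $m\in M$. Now, by the very definition of the product in Definition~\ref{pro}, $f(L)\subseteq L_MK\leq N$. Hence for any $\ell\in L$ we have both $f(\ell)\in N$ and $\ell-f(\ell)\in N$, which forces $\ell\in N$. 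Therefore $L\leq N$, which exhibits $N$ as prime relative to $\Lambda^{fi}(M)$, i.e.\ $N\in LgSpec(M)$.

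For (ii), the task is only to show that $\mx(M)\neq\emptyset$ under the stronger hypothesis that $M$ is projective in $\sigma[M]$ (which implies quasi-projective, so part (i) then yields $\mx(M)\subseteq LgSpec(M)$). Here I would invoke \cite[22.3]{wisbauerfoundations}: a module projective in $\sigma[M]$ is self-projective and has nontrivial radical behavior that, together with a routine Zorn's lemma argument applied to the family of proper submodules of $M$ ordered by inclusion, forces the existence of a maximal proper submodule. The upper-continuity of $\Lambda(M)$ guarantees that the union of a chain of proper submodules is again proper (it would otherwise contain $M$ as a finitely generated directed supremum, contradicting properness), so Zorn applies.

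The only nontrivial step is part (i), and the key subtlety there is recognizing that the hypothesis $L_MK\leq N$ must be combined with the lifting $f\colon M\to K$ of $\pi$, rather than trying to deduce $L\leq N$ directly from the product inclusion. Quasi-projectivity is used in exactly one place, to produce $f$, and it is essential: without it one cannot convert the epimorphism $K\to M/N$ into an $R$-linear section-like map $M\to K$, and the argument breaks. Part (ii) is by contrast a standard citation, so I would simply reference Wisbauer without reproducing the proof.
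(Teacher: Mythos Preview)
Your argument for part (i) is essentially correct and fills in what the paper leaves as a bare citation to \cite[Example 4.14]{mauquantale}. One small point worth tightening: quasi-projectivity, strictly speaking, lets you lift maps $M\to M/N$ through epimorphisms \emph{with domain $M$}, not through the epimorphism $K\to M/N$. The lift $f\colon M\to K$ you need does follow, but via an extra step: since $K+N=M$ and $K\cap N$ is their intersection, $M/(K\cap N)\cong K/(K\cap N)\oplus N/(K\cap N)$; composing the projection $M\to M/(K\cap N)$ with the idempotent onto $K/(K\cap N)$ and lifting \emph{that} through $M\to M/(K\cap N)$ (which is a genuine quasi-projectivity lift) yields an endomorphism of $M$ whose image lies in $K$ and which agrees with the identity modulo $N$. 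So your conclusion stands, but the one-line appeal to quasi-projectivity hides this manoeuvre.

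Part (ii), however, contains a genuine error. Upper-continuity of $\Lambda(M)$ does \emph{not} guarantee that the union of a chain of proper submodules is proper; your parenthetical (``it would otherwise contain $M$ as a finitely generated directed supremum'') silently assumes that $M$ is compact in $\Lambda(M)$, i.e.\ finitely generated, which is not part of the hypothesis. For a concrete failure of the Zorn step as you phrase it, take any non-finitely-generated module such as the Pr\"ufer group $\mathbb{Z}_{p^\infty}$: the chain $\mathbb{Z}_p\subset\mathbb{Z}_{p^2}\subset\cdots$ is a chain of proper submodules whose union is the whole module. The content of \cite[22.3]{wisbauerfoundations} is precisely that projectivity in $\sigma[M]$ forces $\Rad(M)\neq M$ for $M\neq 0$, and the proof there uses projectivity in an essential way (via trace ideals / the fact that a projective module is a summand of a suitable direct sum), not a lattice-theoretic Zorn argument. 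The paper itself offers no argument beyond the citation, so your instinct to cite Wisbauer is correct; just drop the faulty Zorn sketch.
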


\section{The de Morgan's Laws on idiomatic-quantales}\label{DML}

The study of the De Morgan's Laws in (pseudo)multipicative lattices as well as (quasi)quantales, goes back to \cite{ward1939residuated}, and more recently in \cite[Chapter 4]{rosenthal1990quantales}. 

In this section, we give some results related to the  De Morgan's Laws for an {\it idiomatic-quantale} (i.e. an idiom which is also a quantale) such that $ab\leq a\wedge b$ holds for all $a, b\in A$



Let $A$ be a idiomatic-quantale. Since the product of the quantale commutes with arbitrary suprema, we have the adjoint arrow for each variable.

\[(a:\_)\colon A\to A\, \text{ and } \,(\_:a)\colon A\to A\]

given by \[(a:b)=\bigvee\{x\mid ax\leq b\} \mbox{ and } (b:a)=\bigvee\{x\mid xa\leq b\},\] respectively, for all $b\in A$. Note that when $b=0$ then $(a:0)=\ann^{r}(a)$, {\it the right annihilator of $a$ in $A$}, and  $(0:a)=\ann^{l}(a)$ {\it the left annihilator of $a$ in $A$}.
For our purposes in this work, we will consider left annihilators.

Let us say that an idiomatic-quantale satisfies the De Morgan's laws if for all $a, b\in A$:\begin{itemize}

\item[(1)] $\ann(a\vee b)=\ann(a)\wedge\ann(b)$.

\item[(2)] $\ann(a b)=\ann(a)\vee\ann(b)$

The \emph{algebraic De Morgan law} (DML) states that :

\item[(3)]\noindent \[ \ann(a\wedge b)=\ann(a)\vee\ann(b)\]

\end{itemize}

\begin{dfn}
Let $A$ be an idiomatic-quantale (iq for short). We say that $A$ is \emph{semiprime} whenever \[a^{2}=0\Rightarrow a=0,\] for $a\in A$.

\end{dfn}

\begin{lem}\label{ann}
Let $A$ be a semiprime iq then \[\ann(ab)=\ann(a\wedge b)\] holds for all $a,b \in A$.
\end{lem}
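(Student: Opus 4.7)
The plan is to establish the equality by showing both inequalities separately, with the nontrivial direction exploiting semiprimality via a clever witness whose square is zero.

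For the easy inclusion $\ann(a\wedge b)\leq \ann(ab)$, I would use only the hypothesis $ab\leq a\wedge b$ and the fact that multiplication in a quantale is monotone in each variable (this follows from distributivity over arbitrary suprema: $u\leq v$ gives $v=u\vee v$, whence $vw=uw\vee vw\geq uw$). Thus if $x(a\wedge b)=0$ then $x(ab)\leq x(a\wedge b)=0$, and taking the supremum of all such $x$ yields $\ann(a\wedge b)\leq \ann(ab)$.

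For the reverse inclusion, write $x=\ann(ab)$. Since the product distributes over suprema, the supremum $\ann(ab)$ is itself an annihilator, so $x(ab)=0$. Set $c=x(a\wedge b)$; the goal is to show $c=0$, which by semiprimality reduces to showing $c^{2}=0$. Here the two uses of the hypothesis $uv\leq u\wedge v$ come in. First, $a\wedge b\leq a$ together with monotonicity gives $c\leq xa$. Second, applying the standing hypothesis to the product $x(a\wedge b)$ itself gives $c=x(a\wedge b)\leq x\wedge(a\wedge b)\leq b$. Combining these via monotonicity and associativity yields
\[
c\cdot c\;\leq\;(xa)\cdot b\;=\;x(ab)\;=\;0,
\]
so $c^{2}=0$ and semiprimality forces $c=0$, i.e. $x(a\wedge b)=0$, i.e. $\ann(ab)\leq \ann(a\wedge b)$.

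The main obstacle is recognizing the right witness: once one writes down $c=\ann(ab)\cdot(a\wedge b)$, the conclusion $c^{2}=0$ follows almost mechanically from the two inequalities $c\leq\ann(ab)\cdot a$ and $c\leq b$, but spotting that one must apply the hypothesis $uv\leq u\wedge v$ to the composite element $x(a\wedge b)$ (rather than only to $a\wedge b$) is the nonobvious move. After that, semiprimality is invoked exactly once and does all the remaining work.
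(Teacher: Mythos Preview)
Your proof is correct and follows essentially the same strategy as the paper: both arguments set $c=\ann(ab)\,(a\wedge b)$, show $c^{2}=0$, and invoke semiprimality, with the easy inclusion handled via $ab\leq a\wedge b$. Your bound $c^{2}\leq(xa)b=x(ab)=0$ (using $c\leq xa$ and $c\leq b$) is in fact a slight streamlining of the paper's chain $c^{2}\leq(xa)(xb)=x(ax)b\leq x(ab)=0$, which requires the extra step $ax\leq a$.
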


\begin{proof}
First, consider  any $a,b\in A,$ then \[[\ann(ab)\ann(ab)]^{2}\leq (\ann(ab)a)(\ann(ab)b).\] Since $a\wedge b\leq a$ then $\ann(ab)(a\wedge b)\leq\ann(ab)a$, and so \[[\ann(ab)(a\wedge b)]^{2}\leq\ann(ab)a(\ann(ab)(a\wedge b))\leq (\ann(ab)a)(\ann(ab)b).\] On the other hand, notice that \[(\ann(ab)a)(\ann(ab)b)=\ann(a)(a\ann(ab)b\leq\ann(ab)ab=0. \] Thus by the semiprime condition, it follows  $\ann(ab)(a\wedge b)=0.$ So, by the supremus property on the (left) annihilator, we get $\ann(ab)\leq \ann(a\wedge b)$. 

Finally, by our hypothesis in this section, $ab\leq a\wedge b.$ Thus, it is always hold that $\ann(a\wedge b)\leq \ann(ab).$ 

\end{proof}

\begin{prop}\label{semipri}
The following conditions are equivalent for   an idiomatic-quantale $A$:

\begin{itemize}
\item[(1)] $A$ is semiprime and satisfies DML.

\item[(2)] $\ann(ab)=\ann(a)\vee\ann(b)$ for all $a,b\in A$.

\item[(3)] $\ann(a)$ is complemented for all $a\in A$ and $A$ satisfies the DML.
\end{itemize}
\end{prop}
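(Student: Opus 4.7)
The plan is to establish a cyclic chain $(1) \Rightarrow (2) \Rightarrow (3) \Rightarrow (1)$. The implication $(1) \Rightarrow (2)$ is essentially immediate from Lemma \ref{ann}: semiprimeness gives $\ann(ab) = \ann(a \wedge b)$, and DML then identifies this with $\ann(a) \vee \ann(b)$.

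For $(2) \Rightarrow (3)$ I would first recover DML. The inequality $\ann(a) \vee \ann(b) \leq \ann(a \wedge b)$ always holds by antitonicity of $\ann$; the reverse uses the standing hypothesis $ab \leq a \wedge b$ together with (2): $\ann(a \wedge b) \leq \ann(ab) = \ann(a) \vee \ann(b)$. For the complementedness of $\ann(a)$ I would propose $c := \ann(\ann(a))$ as the candidate complement. The relation $c \vee \ann(a) = 1$ follows by applying (2) to $\ann(\ann(a) \cdot a) = \ann(0) = 1$, and the relation $c \wedge \ann(a) = 0$ follows from $(c \wedge \ann(a))^{2} \leq c \cdot \ann(a) = 0$ once one knows $A$ is semiprime. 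Semiprimeness is itself extracted from (2): if $x^{2} = 0$ then $\ann(x) = \ann(x^{2}) = \ann(0) = 1$, from which $x = 0$ is read off via $1 \cdot x = 0$.

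For $(3) \Rightarrow (1)$, DML is given, so only semiprimeness remains. If $x^{2} = 0$ then $x \leq \ann(x)$; taking $c$ to be the complement of $\ann(x)$, one gets $c \wedge x \leq c \wedge \ann(x) = 0$, hence $c \cdot x \leq c \wedge x = 0$, while $\ann(x) \cdot x = 0$ by definition. Sup-continuity of the product then yields $1 \cdot x = (c \vee \ann(x)) \cdot x = c \cdot x \vee \ann(x) \cdot x = 0$, so $x = 0$.

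The main subtlety in both directions is the step passing from $\ann(x) = 1$ (equivalently $1 \cdot x = 0$) to the conclusion $x = 0$. This is where the top element $1$ must behave as a left identity of the quantale product; it is automatic in the motivating example $\Lambda^{fi}(M)$ whenever $M$ is a projective generator of $\sm$, but it needs to be recorded as an implicit assumption on the idiomatic-quantale. Everything else in the argument is formal manipulation using antitonicity of $\ann$, sup-distributivity of the product, modularity, and the inequality $ab \leq a \wedge b$.
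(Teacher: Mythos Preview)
Your cycle $(1)\Rightarrow(2)\Rightarrow(3)\Rightarrow(1)$ is exactly the scheme the paper follows, and your $(1)\Rightarrow(2)$ is identical to theirs. Your treatment of $(2)\Rightarrow(3)$ is actually more complete than the paper's: the paper records only the single line $1=\ann(0)=\ann(\ann(a)a)=\ann(a)\vee\ann(\ann(a))$, and neither verifies the meet condition for complementedness nor re-derives DML from $(2)$; you do both. For $(3)\Rightarrow(1)$ the paper argues slightly differently: from $a^{2}=0$ it gets $a\leq\ann(a)$, then asserts $\ann(\ann(a))\wedge a=0$ together with $a\leq\ann(\ann(a))$ and reads off $a=0$, rather than computing $1\cdot a$ as you do.

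The ``subtlety'' you flag is not merely cosmetic; it is a genuine missing hypothesis, and the paper's version of the step hides the same gap. Your inference $1\cdot x=0\Rightarrow x=0$ needs $x\leq 1x$; the paper's inference $a\leq\ann(\ann(a))$ (for the \emph{left} annihilator) needs $a\cdot\ann(a)=0$. Neither follows from the blanket assumption $ab\leq a\wedge b$ alone. Concretely, take any chain with at least three elements and the zero product $xy=0$: then $\ann(x)=1$ for every $x$, so both $(2)$ and $(3)$ hold trivially, yet $A$ is not semiprime. Thus your instinct to record explicitly that $1$ acts as a left unit for the product is well placed; once that is assumed your argument is complete, and in fact tidier than the paper's.
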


\begin{proof} The implication $(1)\implies(2)$ is a direct consequence of Lemma \ref{ann}.

$(2)\implies(3)$ It is enough to observe that \[1=\ann(0)=\ann(\ann(a)a)=\ann(a)\vee\ann(\ann(a)).\]

$(3)\implies(2)$ Now, suppose (3) and consider any $a\in A$ with $a^{2}=0$. Then, by the property of the left annihilator, $a\leq\ann(a).$ Then, by (3) we get that $\ann(\ann(a))\wedge a=0$ and since $a\leq\ann(\ann(a))$ then $a=0$.

\end{proof}

\begin{obs}
Observe that $\ann\ann\colon A\to A$ is monotone idempotent function. 
Moreover, if $A$ satisfies DML, 
then $\ann\ann$ is a multiplicative nucleus. Even more, the quotient $A_{\ann\ann}$ is a complemented subidiom.
\end{obs}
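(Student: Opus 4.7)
The proof splits into three parts.

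First, I would establish that $\ann\ann$ is monotone and idempotent by a standard Galois-connection argument. Because the product distributes over joins, the set $\{x\mid xa=0\}$ is closed under arbitrary suprema, so $\ann(a)\cdot a = 0$; this gives extensivity $a\leq\ann\ann(a)$. Order-reversal of $\ann$ follows from the implication $a\leq b$ and $xb=0 \Rightarrow xa\leq xb=0$, and composing it with itself yields monotonicity of $\ann\ann$. Idempotence reduces to $\ann\ann\ann=\ann$: extensivity applied to $\ann(a)$ gives $\ann(a)\leq\ann\ann\ann(a)$, and applying the order-reversing $\ann$ to $a\leq\ann\ann(a)$ gives the opposite inequality.

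Second, under DML the prenucleus identity for $\ann\ann$ follows by the two-step computation
\[
\ann\ann(a\wedge b) \;=\; \ann\!\bigl(\ann(a)\vee\ann(b)\bigr) \;=\; \ann\ann(a)\wedge\ann\ann(b),
\]
where the first equality is DML and the second is the always-valid identity $\ann(x\vee y)=\ann(x)\wedge\ann(y)$ (which comes from order-reversal of $\ann$ together with the distributivity of the product over joins). Combined with the first step this makes $\ann\ann$ a nucleus in the sense of Definition \ref{nuc}. For multiplicativity in the quantic sense, I would observe that $\ann\ann(a)\cdot\ann\ann(b)\leq\ann\ann(a)\wedge\ann\ann(b) = \ann\ann(a\wedge b)$, while Lemma \ref{ann} gives $\ann(ab)=\ann(a\wedge b)$ in the semiprime case, so $\ann\ann(ab)=\ann\ann(a\wedge b)$ and hence $\ann\ann(a)\ann\ann(b)\leq\ann\ann(ab)$.

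Third, for complementation of $A_{\ann\ann}$, I would show that for each fixed point $a=\ann\ann(a)$ the element $\ann(a)$ serves as its complement; note that $\ann(a)\in A_{\ann\ann}$ because $\ann\ann\ann=\ann$. Since $\ann\ann$ is a prenucleus the meet in $A_{\ann\ann}$ agrees with the meet in $A$, and $(a\wedge\ann(a))^2 \leq \ann(a)\cdot a = 0$ forces $a\wedge\ann(a)=0$ by the semiprime condition. For the join, the De Morgan identity (1) together with $a=\ann\ann(a)$ gives $\ann(a\vee\ann(a)) = \ann(a)\wedge\ann\ann(a) = \ann(a)\wedge a = 0$, so the join in $A_{\ann\ann}$ is $\ann\ann(a\vee\ann(a)) = \ann(0) = 1$.

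The main obstacle is the complementation step: the vanishing of $a\wedge\ann(a)$ in $A$ uses the semiprime condition, so I would read the remark as living inside the semiprime setting of Proposition \ref{semipri} that immediately precedes it. Under pure DML the prenucleus and multiplicativity properties come for free from the Galois calculus above, but without semiprimeness there is no evident way to collapse the square $(a\wedge\ann(a))^2=0$ to $0$ itself, which is exactly what is needed to produce complements in the quotient.
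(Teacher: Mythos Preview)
The paper states this remark without proof, so there is no argument in the text to compare yours against. Your Galois-connection treatment of monotonicity and idempotence, and your two-line derivation of the prenucleus identity from DML plus the always-valid law $\ann(x\vee y)=\ann(x)\wedge\ann(y)$, are exactly the standard route and are correct as written.

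Your caveat about semiprimeness is not just a matter of caution; it is genuinely required. Under DML alone the complementation claim fails: take the three-element chain $A=\{0,n,1\}$ with $n^2=0$ and $1$ acting as a two-sided unit. This is an idiomatic quantale with $ab\leq a\wedge b$, one checks directly that DML holds, and every element is $\ann\ann$-fixed, yet $n$ has no complement. The same example shows that $\ann\ann(0)=\ann(1)$ need not be $0$ in general, so even identifying the bottom of $A_{\ann\ann}$ already uses semiprimeness. Thus your reading of the remark as sitting inside the semiprime context of Proposition~\ref{semipri} is the only reading under which the ``complemented subidiom'' claim is true, and your proof under that hypothesis (using $(a\wedge\ann(a))^2=0$ for the meet and the identity $\ann(a\vee\ann(a))=\ann(a)\wedge\ann\ann(a)$ for the join) is correct. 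Likewise, your use of Lemma~\ref{ann} to obtain the quantic-nucleus inequality $\ann\ann(a)\,\ann\ann(b)\leq\ann\ann(ab)$ is appropriate and again rests on the semiprime hypothesis.
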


Notice  that  in \cite[Theorem 3.21]{mauquantale}, the authors gave  the construction of a multiplicative nucleus $\mu\colon A\rightarrow A$ such that $A_{\mu}\cong\mathcal{O}(Spec(A))$. Therefore $\mu$ is the point-free version of the radical $\sqrt{\_}$ in commutative ring theory.

\begin{lem}\label{cuasi}
Let $A$ be a semiprime iq then $\ann(a)\in A_{\mu}\,$ for all $a\in A$.
\end{lem}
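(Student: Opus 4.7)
The plan is to exploit that $\mu$, as constructed in \cite[Theorem 3.21]{mauquantale}, is a multiplicative nucleus, together with the observation that in a semiprime iq the bottom element is $\mu$-closed, i.e.\ $\mu(0)=0$. The latter is the point-free version of the standard statement that the intersection of all primes of a semiprime ring is zero, and is either invoked from \cite{mauquantale} or established by a short Krull-type argument based on the semiprime hypothesis.

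First, I would record the basic identity $\ann(a)\cdot a = 0$: writing $\ann(a) = \bigvee\{x\in A\mid xa=0\}$ and using that the quantale product distributes over arbitrary suprema, we get $\ann(a)\cdot a = \bigvee\{xa\mid xa=0\} = 0$. Applying $\mu$ and invoking its multiplicativity together with $\mu(0)=0$ yields
\[\mu(\ann(a))\cdot \mu(a)\;\leq\;\mu(\ann(a)\cdot a)\;=\;\mu(0)\;=\;0.\]
Since $a\leq \mu(a)$ and the product is monotone, it follows that $\mu(\ann(a))\cdot a = 0$, so by the defining property of $\ann(a)$ as a supremum we obtain $\mu(\ann(a))\leq \ann(a)$. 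The reverse inequality $\ann(a)\leq \mu(\ann(a))$ is automatic because $\mu$ is a closure operator, and therefore $\ann(a) = \mu(\ann(a))$, i.e.\ $\ann(a)\in A_\mu$.

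An alternative, slightly more hands-on route is to verify directly that $\ann(a)$ is a semiprime element of $A$ and then invoke the characterization of elements of $A_\mu$. Given $x^2\leq \ann(a)$, multiplying by $a$ gives $x^2 a = 0$; using the standing iq hypothesis $ax\leq a\wedge x\leq x$, one computes $(xa)^2 = x(ax)a \leq x\cdot x\cdot a = x^2 a = 0$, and the semiprimeness of $A$ then forces $xa=0$, so $x\leq \ann(a)$. This presentation has the virtue of isolating exactly where the iq hypothesis and semiprimeness are used.

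The main obstacle is the verification that $\mu(0)=0$ in a semiprime iq (equivalently, that there are enough primes above each nonzero element to separate $0$). This is the non-trivial point at which the semiprime hypothesis must enter; once that is in hand, the rest of the argument is a one-line quantale calculation. Everything else, namely the multiplicativity of $\mu$, the closure property $a\leq\mu(a)$, and the distributivity of the product over $\bigvee$, is formal.
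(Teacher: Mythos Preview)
Your main argument is correct and essentially identical to the paper's: both compute $\mu(\ann(a))\,a\leq \mu(\ann(a))\,\mu(a)=\mu(\ann(a)\,a)=\mu(0)=0$ using multiplicativity of $\mu$ together with $\mu(0)=0$ from semiprimeness, and then conclude $\mu(\ann(a))\leq\ann(a)$. Your alternative route (showing directly that $\ann(a)$ is a semiprime element) is a pleasant variation but is not the path taken in the paper.
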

\begin{proof}
It is enough to prove  $\mu(\ann(a))\leq\ann(a)$.
Indeed,  \[\mu(\ann(a))a\leq \mu(\ann(a))\mu(a)=\mu(\ann(a)a)=\mu(0)=0,\] and notice that the last equality holds by the semiprime condition on $A$. Thus, $\mu(\ann(a))a=0$, and so $\mu(\ann(a))\leq\ann(a)$, as required. 
\end{proof}

In fact, the above result can be done in a more general setting, that is to say, in a multiplicative structure on $A$ (in which the annihilators make sense) with a multiplicative nucleus $j\colon A\rightarrow A$ satisfying that $j(0)=0$.

\begin{lem}\label{cuasi2}
Let $A$ be a iq such that $\mu(0)=0,$ then $\ann(a)=\ann(\mu(a))$ for all $a\in A$.
\end{lem}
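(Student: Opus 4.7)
The plan is to prove the two inclusions separately, using only that $\mu$ is a multiplicative nucleus with $\mu(0)=0$ (no semiprime hypothesis is needed, which explains why this lemma is stated separately from Lemma \ref{cuasi}).

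For the easy direction $\ann(\mu(a))\leq\ann(a)$, I would just use that $\mu$ is a closure operator, so $a\leq\mu(a)$. Then any $x$ with $x\mu(a)=0$ satisfies $xa\leq x\mu(a)=0$, hence $x\leq\ann(a)$. Taking the supremum gives the inclusion.

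For the reverse inclusion $\ann(a)\leq\ann(\mu(a))$, I would first observe that $\ann(a)\cdot a=0$: since the product of the quantale distributes over arbitrary suprema on each side,
\[
\ann(a)\cdot a \;=\; \Bigl(\bigvee\{x\mid xa\leq 0\}\Bigr)\cdot a \;=\; \bigvee\{xa\mid xa=0\}\;=\;0.
\]
Applying $\mu$ and using the hypothesis $\mu(0)=0$ yields $\mu(\ann(a)\cdot a)=0$. Now the multiplicativity of the nucleus ($\mu(x)\mu(y)\leq\mu(xy)$) gives
\[
\mu(\ann(a))\cdot\mu(a)\;\leq\;\mu(\ann(a)\cdot a)\;=\;0.
\]
Since $\ann(a)\leq\mu(\ann(a))$, this forces $\ann(a)\cdot\mu(a)=0$, i.e.\ $\ann(a)\leq\ann(\mu(a))$, completing the proof.

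The only subtle point (where I would expect a reader to pause) is the verification that $\ann(a)\cdot a=0$; this is not a defining identity but follows from the continuity of the quantale product, so it is worth spelling out. Everything else is a direct application of the nucleus axioms combined with the single external hypothesis $\mu(0)=0$.
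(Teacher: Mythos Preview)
Your proof is correct and follows essentially the same route as the paper: both arguments rest on the identity $\ann(a)\cdot a=0$, the multiplicativity inequality $\mu(x)\mu(y)\leq\mu(xy)$, and the hypothesis $\mu(0)=0$ to conclude $\ann(a)\cdot\mu(a)=0$. The only difference is that the paper invokes Lemma~\ref{cuasi} (in its generalized form with $\mu(0)=0$) to obtain the equality $\mu(\ann(a))=\ann(a)$, whereas you bypass that lemma entirely by using only the inflationary property $\ann(a)\leq\mu(\ann(a))$, which already suffices; this makes your version slightly more self-contained.
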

\begin{proof}
It will be enough to observe that $\ann(a)\leq\ann(\mu(a))$ for each $a\in A$.
 By the multiplicative condition of $\mu$ and Lemma \ref{cuasi} we get $\mu(\ann(a)a)=\ann(a)\mu(a),$ and since $0=\mu(0)=\mu(\ann(a)a)$, the conclusion is obtained. 
\end{proof}

The next result is analogous  to \cite[Proposition 4.3.1]{rosenthal1990quantales} given by  P.T. Jonhstone. 

\begin{thm}\label{cuasiani} The following conditions are equivalent for a semiprime quantale 
$A$.
\begin{enumerate}[\rm(1)]
\item $A$ satisfies DML.
\item $\mathcal{O}(Spec(A))$ satisfies DML.
\item $Spec(A)$ is extremely disconnected.
\end{enumerate}
\end{thm}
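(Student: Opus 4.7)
I split the argument into $(2)\Leftrightarrow(3)$ and $(1)\Leftrightarrow(2)$. The former is Johnstone's classical characterization that a topological space is extremally disconnected iff its frame of opens satisfies DML (cf.\ \cite[Proposition 4.3.1]{rosenthal1990quantales}); since $\mathcal{O}(Spec(A))$ is the topology of $Spec(A)$ by Proposition \ref{t}, this applies verbatim.

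For $(1)\Leftrightarrow(2)$ the strategy is to transport DML across the isomorphism $A_\mu\cong\mathcal{O}(Spec(A))$ afforded by the multiplicative nucleus $\mu$ of \cite[Theorem 3.21]{mauquantale}. The key preparatory observation is that, for $c\in A_\mu$, the frame-pseudocomplement $\neg_\mu c$ coincides with $\ann_A(c)$: one inclusion is immediate from $dc\leq d\wedge c$ together with $\ann_A(c)\in A_\mu$ (Lemma \ref{cuasi}); the other uses the semiprime condition via $(\ann_A(c)\wedge c)^2\leq\ann_A(c)\cdot c=0$ to force $\ann_A(c)\wedge c=0$. Combined with Lemma \ref{cuasi2}, which rewrites every annihilator as $\ann_A(a)=\ann_A(\mu(a))$ with $\mu(a)\in A_\mu$, this identification lets us move DML between $A$ and $A_\mu$.

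The direction $(1)\Rightarrow(2)$ is then clean: DML in $A$ applied to $c,d\in A_\mu$ gives $\ann_A(c\wedge d)=\ann_A(c)\vee_A\ann_A(d)$; the left side lies in $A_\mu$, so the $A$-join on the right does too and agrees with the $A_\mu$-join, which via $\neg_\mu=\ann_A$ on $A_\mu$ is exactly frame DML.

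For $(2)\Rightarrow(1)$, applying frame DML in $A_\mu$ to $\mu(a),\mu(b)$ together with the prenucleus identity $\mu(a\wedge b)=\mu(a)\wedge\mu(b)$ and Lemma \ref{cuasi2} yields
\[
\ann_A(a\wedge b)\;=\;\mu\bigl(\ann_A(a)\vee_A\ann_A(b)\bigr).
\]
The remaining, and main, technical step is to upgrade this identity by showing the $A$-join on the right is already $\mu$-closed. This reduces to the condition $\mu(x)=1\Rightarrow x=1$, i.e.\ that every proper element of $A$ lies below some prime; this ``enough primes'' property is a standard Zorn-type feature of the spectral framework considered here, and once granted it collapses the closure on the right and closes the argument.
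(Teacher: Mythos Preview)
Your overall plan coincides with the paper's proof: both use Johnstone's result for $(2)\Leftrightarrow(3)$, both identify $\neg_\mu c=\ann(c)$ on $A_\mu$ (via Lemma~\ref{cuasi} and the semiprime condition) to get $(1)\Rightarrow(2)$, and both pass through Lemma~\ref{cuasi2} and the prenucleus identity $\mu(a\wedge b)=\mu(a)\wedge\mu(b)$ for $(2)\Rightarrow(1)$. In that last direction the paper simply writes the chain
\[
\ann(a\wedge b)=\neg(\mu(a)\wedge\mu(b))=\neg\mu(a)\vee\neg\mu(b)=\ann(a)\vee\ann(b),
\]
leaving the symbol $\vee$ ambiguous between $\vee_\mu$ and $\vee_A$; you are more explicit in noting that what one actually obtains is $\ann(a\wedge b)=\mu\bigl(\ann(a)\vee_A\ann(b)\bigr)$ and that one must still collapse the closure.

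Where your plan has a genuine gap is in that final collapse. The claimed reduction ``$\ann(a)\vee_A\ann(b)\in A_\mu$ reduces to $\mu(x)=1\Rightarrow x=1$'' is not explained, and as stated it does not follow: the condition $\mu(x)=1\Rightarrow x=1$ concerns only the top element and says nothing about $\mu$-closedness of an arbitrary join of annihilators. Even if one restricts to the special case $b=\ann(a)$ (where the join does hit $1$ after applying $\mu$), obtaining complementedness of $\ann(a)$ alone does not deliver DML without further argument. Moreover, the ``enough primes'' assertion $\mu(x)=1\Rightarrow x=1$ is \emph{not} a general feature of idiomatic quantales: the usual Zorn argument producing a prime above a given proper element requires compactness of $1$ (or a similar finiteness hypothesis), which is not assumed here. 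So this step, which you yourself flag as the ``main technical step,'' remains unjustified in your plan.
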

\begin{proof}

Suppose (1). Consider any $a,b\in A_{\mu},$ then by Lemma \ref{cuasi} we get $\ann(a),\ann(b)\in A_{\mu}.$ Thus, $\neg a=\ann(a)$ (the negation in the frame $A_{\mu}$), and so, we conclude (2).

Now if we suppose that in the frame $A_{\mu}$ DML holds, consider any $a,b\in A$ then again by Lemma \ref{cuasi} we get $\ann(a), \ann(b)\in A_{\mu} $ then \[\ann(a\wedge b)=ann(\mu(a\wedge b))=\ann(\mu(a)\wedge\mu(b))=\neg(\mu(a)\wedge\mu(b))=\neg\mu(a)\vee\neg\mu(b)\]\[=\ann(\mu(a))\vee\ann(\mu(b))=\ann(a)\vee\ann(b).\] as required.

The equivalence (2)$\Leftrightarrow$ (3) it follows by 
\cite[Theorem 1]{johnstone1979conditions}.

\end{proof}

\begin{thm}\label{cuasibaer}
The following conditions are equivalent for a semiprime iq $A$.\begin{itemize}
\item[(1)] $A$ satisfies DLM.

\item[(2)] $\ann(ab)=\ann(a)\vee\ann(b)$ for all $a,b\in A$.

\item[(3)] $\ann(a)$ is complemented for all $a\in A$ and $A$ satisfies DML.

\item[(4)]
$\mathcal{O}(Spec(A))$ satisfies DML.
\item[(5)] $Spec(A)$ is extremely disconected.

\end{itemize}
\end{thm}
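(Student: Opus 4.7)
The plan is to observe that Theorem \ref{cuasibaer} is essentially a consolidation of the two results just proved, namely Proposition \ref{semipri} and Theorem \ref{cuasiani}, and therefore requires no genuinely new argument. Since both earlier results assume $A$ is a semiprime iq, exactly the hypothesis of the theorem, we may quote them directly.

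First, I would establish the block of equivalences (1)$\Leftrightarrow$(2)$\Leftrightarrow$(3) by citing Proposition \ref{semipri}, which gives precisely these three equivalent conditions for a semiprime iq. Then I would establish the block (1)$\Leftrightarrow$(4)$\Leftrightarrow$(5) by citing Theorem \ref{cuasiani}, which gives exactly the equivalence of DML in $A$, DML in $\mathcal{O}(Spec(A))$, and extremal disconnectedness of $Spec(A)$. Since both blocks share (1), the two chains of equivalences glue together to produce the full five-fold equivalence claimed in the statement.

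There is really no hard step here; the only thing to be careful about is verifying that the list of conditions matches the two earlier statements verbatim (item (3) in Theorem \ref{cuasibaer} repeats ``$A$ satisfies DML'' together with the complementation of annihilators, and this is the very form used in Proposition \ref{semipri}(3); likewise (4) and (5) match the formulation in Theorem \ref{cuasiani}(2)--(3)). Because the statement is essentially a packaging lemma, the proof is a one-line appeal: by Proposition \ref{semipri} we have $(1)\Leftrightarrow (2)\Leftrightarrow (3)$, and by Theorem \ref{cuasiani} we have $(1)\Leftrightarrow (4)\Leftrightarrow (5)$, whence all five conditions are equivalent.
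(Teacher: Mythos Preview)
Your proposal is correct and follows exactly the paper's own proof, which reads in its entirety: ``It is a consequence of Proposition \ref{semipri} and Theorem \ref{cuasiani}.'' You have simply spelled out which block of equivalences comes from which result, and your observation that condition (1) is the common pivot linking the two chains is precisely the intended argument.
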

\begin{proof}
It is a consequence of Proposition \ref{semipri} and Theorem \ref{cuasiani}.
\end{proof}

In \cite{simmons1989compact} the author introduces a frame to study the \emph{rather below relation} $\curlyeqprec$ for a class of quantales named \emph{carries}. These are just two sided quantales, which are also upper continuous lattices and the quantale product is compatible with the order.
Note that every idiomatic two sided quantale is a carrier. The idea of studying these structures is to give a general overview of the regularity for a frame $\Omega$. Recall that for any $a,x\in\Omega$ we say that $x$ is \emph{rather below} $a$, denoted by $x\curlyeqprec a$, if $a\vee\neg x=1$. Let $\Omega^{\neg}$ denote the set of all elements of $\Omega$ such that $a=\bigvee\{x\mid x\curlyeqprec a\}$. It is said that a frame $\Omega$ is \emph{regular} if $\Omega=\Omega^{\neg}$ (see \cite{johnstone1986stone} for details).

Thus as it is shown in \cite{simmons1989compact}, the rather below relation can be stated in terms of the product: 

Consider the operator $r\colon A\rightarrow A$ with $A$ a quantale given by \[r(a)=\bigvee\{x\in A\mid x \curlyeqprec a\}\] where $a\curlyeqprec b$ if and only if $\ann(a)\vee b=1$, see \cite[page 501]{simmons1989compact} for more details.
Let us denote  \[\Psi(A):=\{a\in A\mid r(a)=a\}.\] As in \cite{simmons1989compact}, this set is a frame and in general is not regular, but $\Psi(A)$ has its own $r$, therefore we can extract a regular part  of $A$ called \emph{the regular core} by iterating the above process:
Let $A^{r(2)}:=\Psi(A)^{r}$. Inductively, it is defined:
\[A^{r(0)}:=A\;\; A^{r(\alpha+1)}:= A^{r(\alpha)r}\;\; A^{r(\lambda)}:=\bigcap\{A^{r(\alpha)}\mid\alpha<\lambda\}\] 
for each non-limit ordinal $\alpha$ and limit ordinal $\lambda$ respectively. This chain is decreasing, therefore by a cardinality argument it eventually stabilizes in some ordinal. Let us denote the least of those ordinals by $\infty$ and $A^{reg}:=A^{r(\infty)}$. In \cite[Theorem 3.4]{simmons1989compact} it is proved that $A^{reg}$ is a regular frame and every regular subframe of $A$ is contained in it. 

Next, we will see the implications of Proposition \ref{semipri} in connection with the regularity of the frame $\Psi(A)$.

\begin{lem}\label{need}
Let $A$ be a iq such that $\ann(a)$ is complemented for each $a\in A$. Then $r(\ann(a))=\ann(a)$ for every $a\in A$.  
\end{lem}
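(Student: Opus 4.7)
The plan is to establish both inclusions $\ann(a)\le r(\ann(a))$ and $r(\ann(a))\le\ann(a)$, with essentially all of the content residing in the forward one.

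First I would handle the forward inclusion by showing $\ann(a)\curlyeqprec\ann(a)$, that is, $\ann(\ann(a))\vee\ann(a)=1$. By hypothesis $\ann(a)$ admits a complement $c\in A$, so $\ann(a)\vee c=1$ and $\ann(a)\wedge c=0$. The iq inequality $xy\le x\wedge y$ then gives
\[c\cdot\ann(a)\le c\wedge\ann(a)=0,\]
and maximality of the left annihilator yields $c\le\ann(\ann(a))$. Consequently
\[1=\ann(a)\vee c\le\ann(a)\vee\ann(\ann(a))\le 1,\]
so $\ann(a)\vee\ann(\ann(a))=1$, which by definition of $r$ gives $\ann(a)\le r(\ann(a))$.

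For the reverse inclusion I would invoke the general bound $r(b)\le b$ available in the carrier (two-sided iq) setting in which the operator $r$ is defined in this paper. Explicitly: if $\ann(x)\vee b=1$, then distributivity of the product over joins yields
\[1\cdot x=(\ann(x)\vee b)\cdot x=\ann(x)\cdot x\vee bx=0\vee bx=bx,\]
using $\ann(x)\cdot x=0$; combined with two-sidedness $1\cdot x=x$ one concludes $x=bx\le b$. Taking the supremum over all such $x$ gives $r(b)\le b$, and specialising to $b=\ann(a)$ finishes the argument.

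The main obstacle will be the first step: extracting the canonical identity $\ann(a)\vee\ann(\ann(a))=1$ from the hypothesis that $\ann(a)$ has \emph{some} lattice-complement. The iq inequality $xy\le x\wedge y$ is exactly what transports an abstract complement into $\ann(\ann(a))$, upgrading it to the canonical double annihilator; without this the complement alone would not suffice. The reverse inclusion is then routine and relies only on the standard conventions for $r$ in the carrier setting, and does not interact with the complementation hypothesis at all.
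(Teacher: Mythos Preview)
Your argument is correct and mirrors the paper's: the complementation hypothesis yields $\ann(a)\curlyeqprec\ann(a)$ and hence $\ann(a)\le r(\ann(a))$ (this is precisely what the paper compresses into ``by the hypothesis''), while the reverse inclusion is exactly the deflatoriness of $r$ that the paper invokes without proof. One small caveat on your explicit justification of $r(b)\le b$: the step $1\cdot x=x$ requires the top element to be a multiplicative unit, whereas ``two-sidedness'' in the quantale sense only guarantees $1\cdot x\le x$; since the paper simply cites deflatoriness from \cite{simmons1989compact} rather than proving it, this does not affect the correspondence between your proof and the paper's.
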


\begin{proof}
First, by the hypothesis,  we get $\ann(a)\leq r(\ann(a))$.
On the other hand, notice that $r$ is deflatory, so in particular, $r(\ann(a))\leq \ann(a) $ 
\end{proof}

\begin{lem}\label{corrr}
In a semiprime iq $A$, if $ab=0$ then $ba=0$ and $a\wedge b=0$.
\end{lem}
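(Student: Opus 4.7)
The plan is to establish both conclusions by squaring elements and using the semiprime hypothesis $a^2=0 \Rightarrow a=0$. The fact that the product distributes over arbitrary suprema in a quantale gives two tools for free: $0\cdot x = x\cdot 0 = 0$ for all $x$ (since $0=\bigvee\emptyset$), and the product is monotone in each variable (if $x\le y$ then $y=x\vee y$, whence $yz = xz\vee yz$, so $xz\le yz$, and similarly on the right). These are the only properties of $A$ I will need beyond semiprimeness and the standing assumption $ab\le a\wedge b$.

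For the first claim, assume $ab=0$ and compute $(ba)^2 = b(ab)a = b\cdot 0\cdot a = 0$ by associativity and the absorbing property of $0$. Then semiprimeness forces $ba=0$.

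For the second claim, I use $a\wedge b\le a$ and $a\wedge b\le b$ together with monotonicity of the product: $(a\wedge b)^2 = (a\wedge b)(a\wedge b)\le ab = 0$, so $(a\wedge b)^2=0$, and semiprimeness again gives $a\wedge b=0$. Alternatively one could note that the standing inequality $xy\le x\wedge y$ applied with $x=y=a\wedge b$ yields $(a\wedge b)^2\le a\wedge b$, but the direct monotonicity argument above already lands in $ab=0$ without needing that inequality.

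There is no real obstacle here; the only thing to be careful about is invoking distributivity over the empty supremum to conclude $0\cdot a=0$ in a quasi-quantale, but this is immediate from the definition since $0=\bigvee\emptyset$ and $(\bigvee\emptyset)a=\bigvee\emptyset=0$, and symmetrically on the right. Everything else is one-line manipulation.
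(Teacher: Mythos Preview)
Your proof is correct and follows essentially the same argument as the paper: square $ba$ and $a\wedge b$, observe both squares vanish (the first via associativity and $ab=0$, the second via monotonicity of the product), and invoke semiprimeness. The extra care you take in justifying $0\cdot x=x\cdot 0=0$ and monotonicity is sound, though note the ambient structure here is an idiomatic-\emph{quantale} (full distributivity over suprema), not merely a quasi-quantale, so the empty-supremum argument goes through without issue.
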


\begin{proof}
Indeed, $(ba)^{2}=(ba)(ba)=b(ab)a=b(0)a=0$ then $ba=0$.

For the other requirement note that $(a\wedge b)^{2}=(a\wedge b)(a\wedge b)\leq ab=0$ thus the semiprime condition gives the result.
\end{proof}


\begin{prop}\label{cori}
Let $A$ be a semiprime iq satisfying DML then the frame $\Psi(A)$ satisfies DML. Moreover, $\Psi(A)$ is a 
regular frame.  
\end{prop}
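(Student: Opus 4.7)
The plan is to exploit the fact that, under the hypothesis, the annihilator operator $\ann\colon A\to A$ takes values in $\Psi(A)$ (Lemma \ref{need}, using that $\ann(a)$ is complemented by Proposition \ref{semipri}), and that $\ann\ann\ann=\ann$ (order-reverse $a\le\ann\ann(a)$ to obtain $\ann\ann\ann(a)\le\ann(a)$, combined with the general $\ann(a)\le\ann\ann\ann(a)$). These two observations let me translate the assumed DML in $A$ into the frame-theoretic DML and regularity of $\Psi(A)$.

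First I would identify the pseudocomplement in the frame $\Psi(A)$. For $c\in\Psi(A)$, by Lemma \ref{corrr} we have $\ann(c)\wedge c=0$, and $\ann(c)\in\Psi(A)$ by Lemma \ref{need}; conversely, if $x\in\Psi(A)$ satisfies $x\wedge c=0$, then the standing assumption $xc\le x\wedge c$ gives $xc=0$, hence $x\le\ann(c)$. Therefore the pseudocomplement of $c$ in $\Psi(A)$ is exactly $\neg c=\ann(c)$. Next, for $a,b\in\Psi(A)$, the meet in $\Psi(A)$ agrees with the meet in $A$, so
\[
\neg(a\wedge b)=\ann(a\wedge b)=\ann(a)\vee\ann(b)
\]
by DML in $A$. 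Since $\ann(a)\vee\ann(b)=\ann(a\wedge b)\in\Psi(A)$, this join coincides with the join taken in the frame $\Psi(A)$, so $\neg(a\wedge b)=\neg a\vee\neg b$ in $\Psi(A)$. This establishes DML for $\Psi(A)$.

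For regularity, fix $a\in\Psi(A)$; I need $a=\bigvee\{y\in\Psi(A)\mid y\curlyeqprec a\}$. Because $r(a)=a$, it suffices, given $x\curlyeqprec a$, to produce some $y\in\Psi(A)$ with $x\le y$ and $y\curlyeqprec a$. Set $y:=\ann(\ann(x))$. Then $x\le y$, and $y\in\Psi(A)$ by Lemma \ref{need}. Using $\ann\ann\ann=\ann$,
\[
\ann(y)\vee a=\ann(\ann(\ann(x)))\vee a=\ann(x)\vee a=1,
\]
so $y\curlyeqprec a$. Consequently
\[
a=r(a)=\bigvee\{x\mid x\curlyeqprec a\}\le \bigvee\{\ann\ann(x)\mid x\curlyeqprec a\}\le\bigvee\{y\in\Psi(A)\mid y\curlyeqprec a\}\le a,
\]
proving $\Psi(A)$ is regular.

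The only step with any real subtlety is the identification of the pseudocomplement in $\Psi(A)$ with the quantale annihilator; everything else is a routine consequence of it together with the closure operator $r$ and the identity $\ann\ann\ann=\ann$. Once this is in hand, the DML translates mechanically and the regularity follows by enlarging each rather-below witness to its double annihilator.
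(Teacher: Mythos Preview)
Your proof is correct and follows essentially the same route as the paper's: identify $\neg a=\ann(a)$ in $\Psi(A)$ via Lemma~\ref{need} and Lemma~\ref{corrr}, transfer DML from $A$, and for regularity replace each rather-below witness $x$ by $\ann\ann(x)\in\Psi(A)$ using $\ann\ann\ann=\ann$. Your write-up is more explicit about the pseudocomplement identification and the triple-annihilator identity, while the paper compresses the regularity conclusion into the statement ``$r^{2}=r$,'' but the underlying argument is the same.
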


\begin{proof}
By Theorem \ref{cuasibaer}, $A$ satisfies DML then $r(\ann(a))=\ann(a)$ for every $a\in A.$ The semiprime condition on $A$ ensures by Lemma \ref{corrr} $\ann(a)\wedge a=0.$ Thus, if $a\in\Psi(A)$ we have $\ann(a)=\neg a.$ Therefore, $\Psi(A)$ satisfies DML.

Observe that in this situation, if $a\vee\ann(x)=1$ and $a\in\Psi(A)$, then \[x\ann(x)\leq x\wedge\ann(x)=\ann(x)x=0,\] 
therefore, $x\leq\ann\ann(x).$ Thus \[a\leq \bigvee\{\ann\ann(x)\in\Psi(A)\mid a\vee\ann\ann\ann(x)=1\}\leq r(a),\] and hence $r^{2}=r$. Thus the regular core is just $\Psi(A)$.

\end{proof}

Given a complete lattice $\ltc,$ recall that an element $c\in \ltc$ is said to be \textit{compact} if  for every $X\subseteq \ltc$ such that $c\leq \bigvee X,$ there exists a finite subset $F\subseteq X$ satisfying $c\leq \bigvee F.$
Also, recall that a lattice $\ltc$ is said to be a \emph{compact lattice} if and only if $1_\ltc$ is compact in $\ltc$. Recall that an idiomatic quantale $A$ is  said to be \emph{normal} if for every $a,b\in A$ with $a\vee b=1,$ there exist $a',b'\in A$ such that $a\vee b'=1=a'\vee b$ and $a'b'=0,$ see also  
\cite[Definition 3.1]{medina2018strongly}.

\begin{cor}\label{nor}
Let $A$ be a compact normal iq. If $A$ is semiprime and DML holds then $\pt\Psi(A)$ is a extremely disconnected Hausdorff space.
\end{cor}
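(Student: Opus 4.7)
The plan is to apply Theorem \ref{cuasiani} to the frame $\Psi(A)$ itself, combined with Proposition \ref{cori}. By Proposition \ref{cori}, the hypotheses that $A$ is semiprime and satisfies DML already force $\Psi(A)$ to be a regular frame on which DML holds. Now $\Psi(A)$, viewed as an idiomatic-quantale with product given by meet, is trivially semiprime, since $a\wedge a = a$ in any frame, so $a^{2}=0$ forces $a=0$.

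Applying Theorem \ref{cuasiani} to the semiprime idiomatic-quantale $\Psi(A)$ immediately yields that $\spec(\Psi(A))$ is extremely disconnected. In a frame, the prime elements coincide with the meet-irreducible elements, which are in bijection with the locale-theoretic points; hence $\spec(\Psi(A))\cong\pt\Psi(A)$, and $\pt\Psi(A)$ is extremely disconnected.

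For the Hausdorff property I would use the regularity of $\Psi(A)$ directly. Given two distinct points $p\neq q$, pick $a\in\Psi(A)$ distinguishing them, say $p\not\leq a$ while $q\leq a$. By regularity, $a=\bigvee\{x\mid x\curlyeqprec a\}$, so there exists $x\curlyeqprec a$ with $p\not\leq x$; then $a\vee\neg x = 1$, so $q$ lies in the open $\U(\neg x)$ while $p$ lies in $\U(x)$, and these opens are disjoint because $x\wedge\neg x = 0$ in the frame. The compactness and normality of $A$ enter at the step of ensuring that $\Psi(A)$ is itself a compact normal frame, so that together with regularity one has spatiality, i.e.\ $\pt\Psi(A)$ genuinely reflects the frame structure and the extreme disconnectedness conclusion is substantive rather than vacuous.

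The principal obstacle is the transfer of compactness and normality from $A$ to $\Psi(A)$: this amounts to checking that finite covers of $1$ in $\Psi(A)$ survive because $1\in\Psi(A)$ and the $\Psi(A)$-joins of such covers are $A$-joins, and that the normality witnesses $a',b'\in A$ supplied by the hypothesis can be replaced by their images under an appropriate closure so as to lie in $\Psi(A)$ while still satisfying $a'\vee b =1 = a\vee b'$ and $a'\wedge b' = 0$ (using the DML and Lemma \ref{corrr} to turn the product relation $a'b'=0$ into a meet relation). Once these routine but delicate checks are in place, the conclusion follows formally from Theorem \ref{cuasiani} and the regularity argument above.
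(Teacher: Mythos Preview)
Your approach is correct for the corollary as literally stated, but it takes a different route from the paper and misidentifies where the compactness and normality hypotheses are actually used.

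The paper's proof is a single line invoking Proposition~\ref{cori} together with \cite[Theorem 3.5]{simmons1989compact}: Simmons' theorem, for a compact normal carrier, furnishes the homeomorphism $\pt\Psi(A)\cong\mx(A)$, from which the Hausdorff property follows immediately (and the extreme disconnectedness is then implicit from $\Psi(A)$ satisfying DML and being spatial). So in the paper, compactness and normality are consumed by the Simmons reference, and the payoff is the concrete identification with $\mx(A)$. Your argument instead stays internal: Theorem~\ref{cuasiani}, applied to the frame $\Psi(A)$ regarded as a semiprime iq under the meet product, gives $\spec(\Psi(A))=\pt\Psi(A)$ extremely disconnected, and the Hausdorff property is read off directly from regularity. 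Both arguments are sound; yours is more self-contained, while the paper's is shorter and yields the extra identification $\pt\Psi(A)\cong\mx(A)$.

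Two remarks. First, in your Hausdorff paragraph the inequalities are reversed: for a prime element $p$ one has $p\in\U(a)$ precisely when $a\nleq p$, not $p\nleq a$. Once this is fixed the separation argument goes through verbatim. Second, your final paragraph is unnecessary. Your first three paragraphs already establish the corollary without ever invoking compactness or normality of $A$: Proposition~\ref{cori} needs only semiprimality and DML, regularity of $\Psi(A)$ alone forces $\pt\Psi(A)$ Hausdorff, and Theorem~\ref{cuasiani} requires no spatiality hypothesis. The transfer of compactness and normality to $\Psi(A)$ that you sketch is therefore not needed for the statement as written; those hypotheses appear in the corollary because the \emph{paper's} intended proof routes through Simmons' theorem, not because your line of argument demands them.
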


\begin{proof}
By proposition \ref{cori} and \cite[Theorem 3.5]{simmons1989compact} we have $\pt\Psi(A)\cong Max(A)$ which is Hausdorff.
\end{proof}

In the next section we will see the module theoretic counterpart of Theorem \ref{cuasibaer}, Proposition \ref{cori} and Corollary \ref{nor}.


\section{De Morgan's Laws  }\label{sec 5}

The following are the module theoretic counterpart of some results given in \cite{niefield1995algebraic}. First, we shall provide some properties of the annihilator $Ann_M(-)$ on modules, using the results in the previous section, which will be useful for the following sections.

\begin{dfn}\cite{beachy2002m} Let $M$ and $K$ be $R$-modules. The \emph{annihilator of $K$ in $M$} is defined as:

\centerline{$\Ann_M(K)=\bigcap\{Ker(f)\mid f\in\Hom_{R}(M,K)\}.$}
\end{dfn}

This annihilator is a fully invariant submodule of $M$ and it is the greatest submodule of $M$ such that $\Ann_M(K)_MK=0$. 






\begin{obs}
If $M$ is a module, the lattice of (resp. fully invariant) submodules $\Lambda(M)$ (resp. $\Lambda^{fi}(M)$) is an idiom. Moreover, if $M$ is projective in $\sm$, this idiom is an idiomatic-quantale. Therefore, the results in Section \ref{DML} can be applied here.
\end{obs}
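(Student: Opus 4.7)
The remark bundles three independent claims: both $\Lambda(M)$ and $\Lambda^{fi}(M)$ are idioms; both become quantales under the product of Definition \ref{pro} when $M$ is projective in $\sm$; and the extra $(\star)$-type behaviour needed for Section \ref{DML} lives on the fully invariant side. The plan is to dispose of them in that order, since each piece is classical and the work really amounts to assembling facts already in print.

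For the idiom structure on $\Lambda(M)$, completeness is standard (arbitrary intersections of submodules are submodules, so all infima exist), modularity is Dedekind's modular law $(N+K)\cap L = N + (K\cap L)$ whenever $N\leq L$, and upper continuity reduces to the identity
\[ N\cap \sum_{i\in I} N_i \;=\; \sum_{i\in I}(N\cap N_i) \]
for any directed family $\{N_i\}$; the nontrivial inclusion uses that a directed sum of submodules coincides with the set-theoretic union, so any element of the left-hand side already lies in a single $N_i$. For $\Lambda^{fi}(M)$, the observation is that whenever each $N_i$ is fully invariant and $f\in \End_R(M)$, one has $f\bigl(\bigcap_i N_i\bigr) \subseteq \bigcap_i f(N_i) \subseteq \bigcap_i N_i$ and $f\bigl(\sum_i N_i\bigr) = \sum_i f(N_i) \subseteq \sum_i N_i$. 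Hence $\Lambda^{fi}(M)$ is a complete sublattice of $\Lambda(M)$ with the same meets and joins, and modularity together with upper continuity transfer verbatim.

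For the quantale structure, one needs the product $N_MK$ to be associative and to distribute over arbitrary joins in each variable. Distributivity in the left argument is immediate from $f\bigl(\sum_i N_i\bigr) = \sum_i f(N_i)$; distributivity in the right argument and associativity are precisely what the hypothesis that $M$ is projective in $\sm$ buys, being the content of \cite[Proposition 5.6]{beachy2002m}, \cite[Proposition 1.3]{PepeGab} and \cite[Lemma 2.1]{maustructure}. The product restricts to $\Lambda^{fi}(M)$ because a product of fully invariant submodules is fully invariant, by \cite[Remark 4.2]{mauquantale}.

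The only genuine obstacle is the quantale step, and even there the heavy lifting is external: the proof is essentially a guided tour through Section \ref{sec2} and its citations, with no new calculation required. The one point worth emphasising when invoking Section \ref{DML} is that the extra inequality $ab\leq a\wedge b$ used there holds on $\Lambda^{fi}(M)$: given $N,K\leq_{fi} M$ and $f\colon M\to K$, composition with the inclusion $K\hookrightarrow M$ combined with full-invariance of $N$ yields $f(N)\subseteq N$, and trivially $f(N)\subseteq K$, so $N_MK\subseteq N\cap K$.
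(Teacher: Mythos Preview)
Your proposal is correct and, in fact, supplies considerably more detail than the paper itself. The paper treats this statement as a bare remark with no accompanying proof: the idiom claims are already asserted in Section~\ref{sec2} (``It is well known that both $\Lambda(M)$ and $\Lambda^{fi}(M)$ are complete lattices. Moreover, they are idioms''), and the quantale upgrade under projectivity in $\sm$ is likewise recorded there via the same citations you use (\cite[Proposition 5.6]{beachy2002m}, \cite[Proposition 1.3]{PepeGab}, \cite[Lemma 2.1]{maustructure}, \cite[Remark 4.2]{mauquantale}). Your explicit verification of upper continuity via directed unions, of closure of $\Lambda^{fi}(M)$ under meets and joins, and of the inequality $N_MK\subseteq N\cap K$ on the fully invariant side are all sound and fill in exactly the steps the paper leaves implicit. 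In short: same approach, same references, but you have unpacked what the paper merely gestures at.
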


\begin{lem}\label{semiprimelemma}
Let $M$ be a semiprime module projective in $\sm$ and let $N,L\leq M$. If $N_ML=0$ then $L_MN=0$ and $N\cap L=0$.
\end{lem}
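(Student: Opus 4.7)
The plan is to mimic the argument of Lemma \ref{corrr}, using associativity of the product $\cdot_M$ (valid since $M$ is projective in $\sm$, by \cite[Proposition 5.6]{beachy2002m}) together with semiprimeness. The one genuine subtlety is that for modules the semiprime hypothesis is only available on \emph{fully invariant} submodules, while $N$ and $L$ need not be fully invariant; so in both parts I would first pass to a fully invariant submodule whose square vanishes, and only then invoke semiprimeness.

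For $L_M N = 0$: from the definition of $\Ann_M$ one has $N_M L = 0 \Leftrightarrow N \leq \Ann_M(L)$. Since $\Ann_M(L)$ is fully invariant, so is $L_M \Ann_M(L)$: for any $g \in \End_R(M)$ and $f \in \Hom_R(M, \Ann_M(L))$, the composite $g \circ f$ again factors through $\Ann_M(L)$, so $g(f(L)) \subseteq L_M \Ann_M(L)$. Moreover $\Ann_M(L)_M L = 0$ is immediate from the definition of $\Ann_M$, and associativity yields
\[
(L_M \Ann_M(L))_M (L_M \Ann_M(L)) = L_M (\Ann_M(L)_M L)_M \Ann_M(L) = 0.
\]
Applying semiprimeness to the fully invariant submodule $L_M \Ann_M(L)$ gives $L_M \Ann_M(L) = 0$, and since $N \leq \Ann_M(L)$, monotonicity of the product yields $L_M N \leq L_M \Ann_M(L) = 0$.

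For $N \cap L = 0$: set $X := N \cap L$ and consider $X_M M$, which is always fully invariant (for any $g\in \End_R(M)$ and $f\in \End_R(M)$, the composite $g\circ f$ again lies in $\End_R(M)$). Using $M_M X \leq X$, associativity, monotonicity, and $X_M X \leq N_M L = 0$,
\[
(X_M M)_M (X_M M) \leq (X_M X)_M M \leq (N_M L)_M M = 0.
\]
Semiprimeness then forces $X_M M = 0$, and since $X \leq X_M M$ (take the identity endomorphism), $N \cap L = X = 0$. The main obstacle is precisely the ``fully invariant vs.\ arbitrary'' mismatch flagged above; once it is resolved by the detours through $\Ann_M(L)$ and $X_M M$, the rest is the same associativity-plus-semiprime trick as in Lemma \ref{corrr}.
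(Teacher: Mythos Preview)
Your proof is correct. The paper's own proof is a one-line citation to Lemma~\ref{corrr} (together with an external reference), so you have essentially written out what that citation is meant to convey, while carefully handling the point the paper leaves implicit: the module-theoretic semiprime hypothesis is stated only for \emph{fully invariant} submodules, so one must pass to a fully invariant submodule before invoking it. Your use of $X_MM$ in the second part is exactly the standard way to do this (and is in fact what shows that $\Lambda(M)$, not just $\Lambda^{fi}(M)$, is a semiprime iq under the given hypotheses, after which Lemma~\ref{corrr} applies verbatim).

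The only minor difference is in the first part: the most direct transcription of Lemma~\ref{corrr} would compute $(L_MN)_M(L_MN)=L_M(N_ML)_MN=0$ and then pass to the fully invariant $(L_MN)_MM$; you instead detour through $L_M\Ann_M(L)$. Both routes work and are equally short; yours has the small advantage of isolating the fully invariant object $\Ann_M(L)$ explicitly, while the direct route stays closer to the lattice-theoretic argument of Lemma~\ref{corrr}.
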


\begin{proof}
It follows from Lemma \ref{corrr}. (See also \cite[Lemma 1.9]{maugoldie})
\end{proof}


\begin{lem}\label{annprodinter}
	Let $M$ be a semiprime module projective in $\sm$ and let $N,L\in\Lambda^{fi}(M)$. Then $\Ann_{M}(N\cap L)=\Ann_{M}(N_{M}L)$.
\end{lem}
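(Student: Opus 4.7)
The plan is to reduce this statement directly to the quantale-theoretic Lemma \ref{ann}. Since $M$ is projective in $\sigma[M]$, the lattice $\Lambda^{fi}(M)$ is an idiomatic-quantale: the product $-_M-$ is associative, distributes over arbitrary suprema, and sends pairs of fully invariant submodules to fully invariant submodules. Moreover, for $N,L\leq_{fi}M$ one has $N_ML\leq N\cap L$ (every $f\colon M\to L$, post-composed with the inclusion $L\hookrightarrow M$, becomes an endomorphism of $M$ which sends $N$ into $N$, so $f(N)\leq N\cap L$), which is exactly the hypothesis $ab\leq a\wedge b$ assumed throughout Section \ref{DML}. Also, the meet in $\Lambda^{fi}(M)$ is set-theoretic intersection.

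Next I would reconcile the two notions of annihilator. By definition $\Ann_M(K)$ is the largest submodule $P$ of $M$ with $P_MK=0$, and it is automatically fully invariant. Hence for $K\leq_{fi}M$ it also coincides with the quantale left-annihilator in $\Lambda^{fi}(M)$:
\[
\Ann_M(K)=\bigvee\{P\leq_{fi}M\mid P_MK=0\}=\ann(K).
\]
The hypothesis that $M$ is a semiprime module is, verbatim, the condition that the idiomatic-quantale $\Lambda^{fi}(M)$ is semiprime in the sense of Section \ref{DML}.

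With these identifications in place, applying Lemma \ref{ann} to $a=N$ and $b=L$ in $\Lambda^{fi}(M)$ gives
\[
\Ann_M(N_ML)=\ann(NL)=\ann(N\wedge L)=\Ann_M(N\cap L),
\]
which is the statement to be proved. I do not foresee any real obstacle; the only point that requires a moment's thought is the identification $\Ann_M(K)=\ann(K)$ inside $\Lambda^{fi}(M)$, and once this is observed the lemma is a transcription of Lemma \ref{ann}. (If one prefers an intrinsic argument, one can just repeat the squaring trick of Lemma \ref{ann} in the module setting: writing $P=\Ann_M(N_ML)$, one shows $[P_M(N\cap L)]_M[P_M(N\cap L)]\leq P_MN_ML=0$ using $N\cap L\leq N$, $N\cap L\leq L$ and $a\cdot\ann(ab)\leq a$, and then invokes the semiprime hypothesis via Lemma \ref{semiprimelemma} to conclude $P_M(N\cap L)=0$; the reverse inclusion is immediate from $N_ML\leq N\cap L$ and the order-reversing character of $\Ann_M$.)
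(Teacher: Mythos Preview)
Your proposal is correct and follows exactly the paper's approach: the paper's proof is the single line ``It follows from Lemma \ref{ann},'' and you carry out precisely this reduction, spelling out the identifications (that $\Lambda^{fi}(M)$ is a semiprime idiomatic-quantale with $ab\leq a\wedge b$, and that $\Ann_M$ coincides with the quantale left annihilator) that the paper leaves implicit.
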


\begin{proof}
It follows from Lemma \ref{ann}.
%
\end{proof}

\begin{lem}\label{ann1}
Let $M$ be a semiprime module projective in $\sm$. Then, for all $N, L, N'\text{ and } L'\in\Lambda^{fi}(M)$ such that $\Ann_M(N)=\Ann_M(N')$ and $\Ann_M(L)=\Ann_M(L')$, it is satisfied that
 \[\Ann_{M}(N_ML)=\Ann_{M}({N'}_ML')\] 
\end{lem}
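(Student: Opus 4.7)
The plan is to set up the problem as an instance of the idiomatic-quantale machinery and reduce everything to two ingredients: associativity of the product $(\_)_M(\_)$ (available because $M$ is projective in $\sigma[M]$, see the discussion preceding the remark that $\Lambda^{fi}(M)$ is closed under the product) and the ``swap'' property from Lemma \ref{semiprimelemma}, which says that in our semiprime setting $X_MY=0$ already implies $Y_MX=0$. By symmetry it suffices to prove one inclusion, so I would fix $K\leq_{fi}M$ with $K_M(N_ML)=0$ and chase $K$ into $\Ann_M(N'_ML')$.

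The chain I have in mind is the following. From $K_M(N_ML)=0$ and associativity, $(K_MN)_ML=0$, so $K_MN\leq \Ann_M(L)=\Ann_M(L')$; hence $(K_MN)_ML'=0$. Applying Lemma \ref{semiprimelemma} to the (fully invariant) submodules $K_MN$ and $L'$ gives $L'_M(K_MN)=0$, i.e.\ $(L'_MK)_MN=0$. Therefore $L'_MK\leq \Ann_M(N)=\Ann_M(N')$, which yields $(L'_MK)_MN'=0$, equivalently $L'_M(K_MN')=0$. A second application of Lemma \ref{semiprimelemma} reverses the factors and delivers $(K_MN')_ML'=0$, i.e.\ $K_M(N'_ML')=0$, as desired. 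Interchanging the roles of the primed and unprimed submodules closes the argument.

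The only points that require care are bookkeeping ones: each time I invoke Lemma \ref{semiprimelemma} I must have an \emph{actual} zero product in hand (the lemma does not swap inequalities), and each application of associativity must be on fully invariant submodules, which is why the observation $K_MN, L'_MK\in\Lambda^{fi}(M)$ (from $M$ being projective in $\sigma[M]$) is essential at every step. A mild alternative would be to first use Lemma \ref{annprodinter} to rewrite the claim as $\Ann_M(N\cap L)=\Ann_M(N'\cap L')$, but even then the same swap-and-replace chain is needed, so nothing is gained. The core difficulty is psychological rather than technical: one must trust that the four-step swap chain really does transport both hypotheses $\Ann_M(N)=\Ann_M(N')$ and $\Ann_M(L)=\Ann_M(L')$ across the non-commutative product without ever invoking DML or any stronger structure than semiprimeness.
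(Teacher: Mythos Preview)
Your proof is correct and follows essentially the same route as the paper's: both arguments start from $K_M(N_ML)=0$ with $K=\Ann_M(N_ML)$ (you take $K$ generic, which is harmless), use associativity and the swap from Lemma~\ref{semiprimelemma} to replace $L$ by $L'$, then repeat to replace $N$ by $N'$, and finish by symmetry. The only cosmetic difference is bookkeeping: the paper groups the factors so as to arrive at $N_M(L'_MK)=0$ before invoking $\Ann_M(N)=\Ann_M(N')$, summarizing several cycle moves as ``after two applications of Lemma~\ref{semiprimelemma}'', whereas you spell out the equivalent chain $(K_MN)_ML'=0\Rightarrow (L'_MK)_MN=0\Rightarrow (L'_MK)_MN'=0\Rightarrow (K_MN')_ML'=0$ explicitly.
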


\begin{proof}
We have that $\Ann_{M}(N_{M}L)_M N_{M}L=0$. Since $\Ann_M(L)=\Ann_M(L')$, $(\Ann_{M}(N_{M}L)_M N)_{M}L'=0$. It follows from Lemma \ref{semiprimelemma} that \[N_M(L'_M \Ann_{M}(N_{M}L))=0.\] 
Since $\Ann_M(N)=\Ann_M(N')$, after two applications of Lemma \ref{semiprimelemma} we get $N'_M(L'_M \Ann_{M}(N_{M}L))=0$. Again Lemma \ref{semiprimelemma} implies that $\Ann_{M}(N_{M}L)\subseteq \Ann_{M}(N'_{M}L')$. The other comparison is similar.
\end{proof}

In the previous section was introduced the frame $\Psi(A)$ for an idiomatic-quantale $A$. The module-theoretic counterpart of this frame was presented and studied in \cite[Section 5]{medina2018attaching}. Given a module $M$, we defined the following spatial frame \[\Psi(M)=\{N\in\Lambda^{fi}(M)\mid \forall n\in N, N+\Ann_M(Rn)=M\}.\]

If $M$ is a self-progenerator in $\sigma[M]$, the frame $\Psi(M)$  is characterized as the fixed points of an operator named $Ler\colon \Lambda^{fi}(M)\to  \Lambda^{fi}(M)$ given by $Ler(N)=\{m\in M \mid N+\Ann_M(Rm)=M \}$ and this operator coincides with the operator $r$ of last section that defines $\Psi(A)$ \cite[Proposition 5.19]{medina2018attaching}.

Properties of this operator are given in \cite{medina2018attaching}. Also, in \cite{medina2018strongly}, more characterizations of this operator and the frame $\Psi(M)$ are obtained when $M$ is a strongly harmonic or Gelfand module.

\begin{dfn}\label{Baer}
A module $M$ is called \emph{FI-Baer} if $\Ann_{M}(N)$ is a direct summand of $M$ for every $N\in\Lambda^{fi}(M)$
\end{dfn}

\begin{obs}
In \cite[Definition 3.2]{rizvibaer} are defined \emph{quasi-Baer modules}. Given $N\in\Lambda^{fi}(M)$, $\Hom_R(M,N)$ is a two-sided ideal of $\End_R(M)$. Then by \cite[Remark 3.3]{rizvibaer} in every quasi-Baer module $M$, $\Ann_M(N)$ is a direct summand for all $N\in\Lambda^{fi}(M)$.
\end{obs}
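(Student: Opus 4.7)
The plan is to verify the two assertions packed into the remark: first, that $\Hom_R(M,N)$ can be viewed as a two-sided ideal of $S:=\End_R(M)$ when $N$ is fully invariant, and second, that the FI-Baer condition (Definition \ref{Baer}) then follows from the quasi-Baer hypothesis of Rizvi--Roman simply by unpacking the definition of $\Ann_M(N)$.

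For the first step I would fix $N\in\Lambda^{fi}(M)$ and identify $\Hom_R(M,N)$ with a subset of $S$ by post-composing with the inclusion $i:N\hookrightarrow M$. Closure under right multiplication is immediate: for $f\in\Hom_R(M,N)$ and $g\in S$ one has $(f\circ g)(M)\subseteq f(M)\subseteq N$. Closure under left multiplication is exactly where full invariance is used: $(g\circ f)(M)\subseteq g(N)\subseteq N$ since $g(N)\subseteq N$ by the fully invariant hypothesis on $N$. Hence $\Hom_R(M,N)$ is a two-sided ideal of $S$.

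For the second step I would observe that the definition of $\Ann_M(N)$ gives
\[
\Ann_M(N)=\bigcap\{\ker(f)\mid f\in\Hom_R(M,N)\}=\{m\in M\mid \varphi(m)=0\text{ for all }\varphi\in\Hom_R(M,N)\},
\]
which is precisely the right annihilator in $M$ of the ideal $\Hom_R(M,N)\subseteq S$, when $M$ is regarded as a left $S$-module. If $M$ is quasi-Baer in the sense of \cite[Definition 3.2]{rizvibaer}, then by definition the right annihilator in $M$ of every two-sided ideal of $S$ is a direct summand of $M$. Applying this with the ideal $\Hom_R(M,N)$ yields that $\Ann_M(N)$ is a direct summand, proving the claim.

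There is no substantive obstacle here; the content of the remark is really an unpacking of definitions, and the only nontrivial observation is that full invariance of $N$ is exactly what guarantees that $\Hom_R(M,N)$ is closed under left multiplication by arbitrary endomorphisms, so that the quasi-Baer hypothesis can be applied to it.
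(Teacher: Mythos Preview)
Your argument is correct and matches the paper's approach. The paper does not supply a separate proof for this remark; it simply records the two facts with citations to \cite{rizvibaer}, and you have accurately unpacked those citations: full invariance of $N$ gives the two-sided ideal property of $\Hom_R(M,N)$, and the identification $\Ann_M(N)=r_M(\Hom_R(M,N))$ lets the quasi-Baer hypothesis apply directly.
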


The following example shows that there is an FI-Baer module which is not quasi-Baer.

\begin{ej} Let $p\in\mathbb{Z}$ be a prime number and consider the $\mathbb{Z}$-module $M=\mathbb{Z}_{p^\infty}$. Given any proper submodule $N$ of $M$, $\Hom_R(M,N)=0$. This implies that $\Ann_M(N)=M$. It follows that $M$ is an FI-Baer module. On the other hand, consider the submodule $\mathbb{Z}_p\leq M$ and let $S=\End_\mathbb{Z}(M)$. Recall that $S$ is an integral domain, then 
$0\neq\ell_S(\mathbb{Z}_p)=\{f\in S\mid f(\mathbb{Z}_p)=0\}<S.$
Since $S$ is a domain, $\ell_S(\mathbb{Z}_p)$ cannot be generated by an idempotent. Therefore, $M$ is not quasi-Baer.
\end{ej}

\begin{prop}\label{Semiler}
Let $M$ be a semiprime module self-projective in $\sm$. If $M$ is $FI$-Baer, then $Ler(\Ann_{M}(N))=\Ann_{M}(N)$ for all $N\in\Lambda^{fi}(M)$.
\end{prop}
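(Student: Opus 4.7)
The overall plan is to use the FI-Baer hypothesis to write $\Ann_M(N)=eM$ for an idempotent $e\in S:=\End_R(M)$, upgrade $e$ to a central idempotent, and thereby produce a decomposition $M=\Ann_M(N)\oplus \Ann_M(\Ann_M(N))$ inside $\Lambda^{fi}(M)$. The easy inclusion $\Ann_M(N)\subseteq Ler(\Ann_M(N))$ will then drop out: for $m\in\Ann_M(N)$, $Rm\subseteq\Ann_M(N)$, so $\Ann_M(Rm)\supseteq\Ann_M(\Ann_M(N))$, and combining with the decomposition gives $\Ann_M(N)+\Ann_M(Rm)=M$.

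To make $e$ central, I would use that $\Ann_M(N)$ is fully invariant: for every $\phi,\psi\in S$ this yields $\phi e=e\phi e$ and $(1-e)\psi e=0$ (because $\psi e$ lands in $eM=\ker(1-e)$). Setting $g:=e\phi(1-e)$, a direct computation gives $g^{2}=0$ and, more strongly, $gSg=0$. Since $M$ is semiprime and projective in $\sm$, the ring $S$ is semiprime, so $gSg=0$ forces $g=0$, i.e.\ $e\phi=\phi e$. With $e$ central, $(1-e)M$ is fully invariant, and the iq inequality $ab\le a\wedge b$ in $\Lambda^{fi}(M)$ together with $(1-e)M\cap\Ann_M(N)=0$ gives $\bigl((1-e)M\bigr)_M\Ann_M(N)=0$, so $(1-e)M\le \Ann_M(\Ann_M(N))$. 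The sandwich $M=\Ann_M(N)+(1-e)M\le \Ann_M(N)+\Ann_M(\Ann_M(N))\le M$ collapses to the required decomposition.

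For the nontrivial inclusion, take $m\in Ler(\Ann_M(N))$, so $\Ann_M(N)+\Ann_M(Rm)=M$. Applying the central projection $1-e$ and using that $\Ann_M(Rm)$ is fully invariant, one obtains $\Ann_M(\Ann_M(N))=(1-e)M=(1-e)\Ann_M(Rm)\le \Ann_M(Rm)$; equivalently $\bigl(\Ann_M(\Ann_M(N))\bigr)_M Rm=0$. Lemma~\ref{semiprimelemma} then flips the factors to $(Rm)_M\Ann_M(\Ann_M(N))=0$, which by definition means $f(Rm)=0$ for every $f\in\Hom_R(M,\Ann_M(\Ann_M(N)))$. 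Choosing $f=1-e$ gives $R(1-e)m=0$, hence $(1-e)m=0$ and $m=em\in\Ann_M(N)$.

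The real obstacle is the centrality of $e$, which rests on the passage from $M$ being a semiprime module (projective in $\sm$) to $S=\End_R(M)$ being a semiprime ring; this is the only genuinely nontrivial ring-theoretic input. Everything else is bookkeeping with the iq structure of $\Lambda^{fi}(M)$, the definition of $Ler$, and Lemma~\ref{semiprimelemma}.
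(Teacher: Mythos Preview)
Your argument is essentially correct, but it takes a considerably longer road than the paper. The paper's proof is a one-liner: it invokes Lemma~\ref{need} for the idiomatic-quantale $\Lambda^{fi}(M)$ together with the identification $r=Ler$ (recorded just before the proposition). In that abstract setup the inclusion $Ler(\Ann_M(N))\subseteq\Ann_M(N)$ is free because $r$ is deflatory, and the reverse inclusion follows from complementedness of $\ann(a)$, which is exactly the FI-Baer hypothesis. So what you label the ``nontrivial inclusion'' and prove by projecting with $1-e$ and flipping via Lemma~\ref{semiprimelemma} is, in the paper's framework, the trivial one; your genuine work (producing the decomposition $M=\Ann_M(N)\oplus\Ann_M(\Ann_M(N))$) corresponds to the paper's use of complementedness.

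Two smaller remarks. First, your detour through semiprimeness of $S=\End_R(M)$ to make $e$ central is avoidable: the paper already has Proposition~\ref{semi} (in a semiprime self-projective $M$, the complement of a fully invariant summand is again fully invariant) and Lemma~\ref{idem} (both summands fully invariant forces the projection to be central), which give centrality of $e$ without any ring-theoretic input. Your $gSg=0$ computation is fine, but the passage ``$M$ semiprime projective in $\sm$ $\Rightarrow$ $S$ semiprime'' is an extra fact you would need to cite, and the stated hypothesis is only \emph{self}-projective. Second, once you know $Ler$ is deflatory, your entire last paragraph can be deleted. What your concrete approach buys is independence from the Simmons machinery for $r$; what the paper's approach buys is a two-line proof.
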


\begin{proof}
It follows from Lemma \ref{need} because in this case, $r=Ler$.
%
\end{proof}

\begin{cor}
Let $M$ be a semiprime module self-projective in $\sm$. If $M$ is $FI$-Baer, then $\Psi(M)=(\Lambda^{fi}(M))^{reg}$.
\end{cor}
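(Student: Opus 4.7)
The plan is to apply Proposition~\ref{cori} to the idiomatic-quantale $A=\Lambda^{fi}(M)$; the proof of that proposition shows $r^2=r$ and concludes $A^{reg}=\Psi(A)$ under the hypotheses that $A$ is a semiprime iq satisfying DML. Semiprimeness of $\Lambda^{fi}(M)$ is immediate from $M$ being a semiprime module, so the task reduces to establishing DML for $\Lambda^{fi}(M)$.

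To verify DML I invoke Theorem~\ref{cuasibaer}: in a semiprime iq, DML is equivalent to $\ann(a)$ being complemented for every $a$. So I need to show that for every $N\in\Lambda^{fi}(M)$, the annihilator $\Ann_M(N)$ admits a complement inside $\Lambda^{fi}(M)$. The FI-Baer hypothesis provides a decomposition $M=\Ann_M(N)\oplus L$ in $\Lambda(M)$, and the crucial step is to upgrade it to a decomposition in $\Lambda^{fi}(M)$ by showing that the complement $L$ is fully invariant. Using that $\Ann_M(N)\leq_{fi} M$, every $f\in\Hom_R(M,L)$ satisfies $f(\Ann_M(N))\subseteq \Ann_M(N)\cap L=0$, so by the very definition of the module product, $(\Ann_M(N))_M L=0$. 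Lemma~\ref{semiprimelemma} then yields $L_M \Ann_M(N)=0$, giving $L\subseteq \Ann_M(\Ann_M(N))$. Combined with $\Ann_M(\Ann_M(N))\cap \Ann_M(N)=0$ (another application of Lemma~\ref{semiprimelemma}) and $\Ann_M(N)+L=M$, this forces $L=\Ann_M(\Ann_M(N))$, which is a fully invariant submodule.

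With the complemented-annihilator property secured in $\Lambda^{fi}(M)$, Theorem~\ref{cuasibaer} gives DML, and Proposition~\ref{cori} then delivers $\Psi(M)=(\Lambda^{fi}(M))^{reg}$. I expect the only genuinely non-routine step to be the full invariance of the complement $L$; everything else is a direct appeal to the framework of Section~\ref{DML}. I note that Proposition~\ref{Semiler} serves as a natural stepping stone, since through Lemma~\ref{need} its proof already contains essentially the same complemented-annihilator computation, re-expressed as $Ler(\Ann_M(N))=\Ann_M(N)$.
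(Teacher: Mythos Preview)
Your proof has a gap at the step where you invoke Theorem~\ref{cuasibaer}. You assert that in a semiprime iq, DML is equivalent to ``$\ann(a)$ complemented for every $a$,'' but look again at condition~(3) of that theorem (inherited verbatim from Proposition~\ref{semipri}): it reads ``$\ann(a)$ is complemented for all $a\in A$ \emph{and} $A$ satisfies DML.'' The equivalence (1)$\Leftrightarrow$(3) therefore only records that DML (plus semiprimeness) forces complemented annihilators, not the converse implication you need. Your computation producing the decomposition $M=\Ann_M(N)\oplus\Ann_M(\Ann_M(N))$ is correct and worthwhile, but Theorem~\ref{cuasibaer} does not let you extract DML from it.

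The paper's proof is shorter and bypasses DML altogether: it simply combines Proposition~\ref{Semiler} with \cite[Theorem~5.20]{medina2018attaching}. Your detour can be repaired in two ways. First, observe that the second half of the proof of Proposition~\ref{cori}---the part establishing $r^2=r$ and hence $A^{reg}=\Psi(A)$---uses only that annihilators are $r$-fixed (Lemma~\ref{need}, which needs just complemented annihilators) together with semiprimeness (via Lemma~\ref{corrr}); DML itself is not needed for that conclusion. So from your complemented-annihilator decomposition you can go directly to the result, and this is essentially the paper's route in abstract form. Second, if you insist on obtaining DML, you would have to invoke the module-specific implication (3)$\Rightarrow$(1) of Theorem~\ref{Baer1}, whose proof is nontrivial and which is stated under the stronger hypothesis ``projective in $\sigma[M]$'' rather than the ``self-projective'' assumed here.
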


\begin{proof}
It follows from Proposition \ref{Semiler} and \cite[Theorem 5.20]{medina2018attaching}.
\end{proof}

\begin{prop}\label{semi}
Let $M$ be a semiprime module self-projective in $\sm$. Suppose that $M=N\oplus L$. If $N\in\Lambda^{fi}(M)$ then $L\in\Lambda^{fi}(M)$
\end{prop}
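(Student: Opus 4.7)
The plan is to show that any endomorphism $f\in\End_R(M)$ sends $L$ into itself, by establishing that the ``off-diagonal'' map $\pi_N\circ f\circ \iota_L\colon L\to N$ is zero, where $\pi_N\colon M\to N$ is the projection with kernel $L$ and $\iota_L\colon L\to M$ is the inclusion.

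The first move is to verify that $N_ML=0$. By definition, $N_ML=\sum\{h(N)\mid h\in\Hom_R(M,L)\}$. For any such $h$, the composition $\iota_L\circ h$ lies in $\End_R(M)$, so fully-invariance of $N$ gives $\iota_Lh(N)\subseteq N$; but also $\iota_Lh(N)\subseteq L$, whence $h(N)\subseteq N\cap L=0$. Therefore $N_ML=0$.

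Now I invoke Lemma \ref{semiprimelemma}, which applies since $M$ is semiprime and self-projective in $\sm$: from $N_ML=0$ we obtain $L_MN=0$. Unwinding the definition again, this says $\sum\{g(L)\mid g\in\Hom_R(M,N)\}=0$, i.e. every $R$-homomorphism $M\to N$ vanishes on $L$. Applying this to $g=\pi_N\circ f\in\Hom_R(M,N)$ yields $\pi_N(f(L))=0$, so $f(L)\subseteq\ker\pi_N=L$, proving $L\leq_{fi}M$.

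The only genuinely nontrivial point is the first step, where one must notice that fully-invariance of $N$ as a submodule of $M$ forces the product $N_ML$ (whose defining homomorphisms land in $L$, not in $M$) to vanish, by pre-composing with the inclusion $\iota_L$. Everything after that is a straightforward application of the symmetry of nilpotent products in a semiprime module provided by Lemma \ref{semiprimelemma}.
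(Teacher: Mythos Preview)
Your proof is correct and follows essentially the same approach as the paper's. Both arguments first obtain $N_ML=0$ from the full invariance of $N$ and the fact that $N\cap L=0$, then invoke semiprimeness to flip this to $L_MN=0$, and finally deduce that $L$ is fully invariant. The only cosmetic difference is in this last step: you unpack $L_MN=0$ as ``every $g\colon M\to N$ kills $L$'' and apply it to $g=\pi_N\circ f$, whereas the paper computes $L_MM=(L_MN)\oplus L^2=L^2\leq L$ using the direct-sum decomposition --- two phrasings of the same observation.
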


\begin{proof}
We have that $N_{M}L\subseteq N\cap L=0$, therefore $N_{M}L=0$.
Now $N_{M}L=0=L_{M}N$, since $M$ is semiprime. Thus $L_{M}M=\left( L_{M}N\right) \oplus L^{2}=L^{2}\leq L$. It follows that $L=L_{M}M$.
\end{proof}

\begin{lem}\label{idem}
If $M=N\oplus L$ with $N, L\in\Lambda^{fi}(M)$ then the corresponding idempotent $e_{N}\colon M\rightarrow N$ is central.
\end{lem}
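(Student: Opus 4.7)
The plan is to verify directly that $e_N$ commutes with every endomorphism of $M$, using only the fact that both $N$ and $L$ are fully invariant. The decomposition $M = N \oplus L$ means every $m \in M$ has a unique expression $m = n + \ell$ with $n \in N$ and $\ell \in L$, and by definition $e_N(m) = n$ while $(1 - e_N)(m) = e_L(m) = \ell$.

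Given any $f \in \End_R(M)$, I would observe that $f(N) \subseteq N$ and $f(L) \subseteq L$ because both summands are fully invariant. Consequently, for $m = n + \ell$ as above, the equation $f(m) = f(n) + f(\ell)$ is precisely the $N$-$L$ decomposition of $f(m)$, since $f(n) \in N$ and $f(\ell) \in L$. Applying $e_N$ to both sides yields $e_N(f(m)) = f(n) = f(e_N(m))$, so $e_N \circ f = f \circ e_N$ for every $f \in \End_R(M)$, which is exactly the statement that $e_N$ is central.

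There is no real obstacle here; the content of the statement is simply that full invariance of each summand of an internal direct sum is equivalent to the projection being central, and this drops out of the uniqueness of the $N \oplus L$ decomposition applied to $f(m)$. I would not need to invoke semiprimeness, self-projectivity, or any of the structural hypotheses of the surrounding section — the lemma holds at the purely module-theoretic level for any direct sum into fully invariant pieces.
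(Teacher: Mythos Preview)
Your proof is correct and follows essentially the same approach as the paper's: both compute $e_N f(m)$ and $f e_N(m)$ on an arbitrary element $m = n + \ell$ and use full invariance of $N$ and $L$ to see that each equals $f(n)$. The paper's version is just the one-line computation, while you additionally spell out why $f(m) = f(n) + f(\ell)$ is the $N\oplus L$-decomposition of $f(m)$, but the substance is identical.
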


\begin{proof}
Let $f\in \End_{R}(M)$ then $fe_{N}(m)=fe_{N}(n+l)=f(n)$ and $e_{N}f(m)=e_{N}f(n)+e_{N}f(l)=f(n)$.
\end{proof}

Recall that a module $M$ is \emph{FI-retractable} if for any non-zero fully invariant submodule $K$ of $M,$ $\Hom_R(M, K) \neq 0.$
Conditions on FI-retractable module has been study previously in the literature, for instance see
\cite{mostafanasab2015endoprime}.
 It can be seen that if $M$ is semiprime, then $M$ is $FI$-retractable, following a similar proof to that given in \cite[Lemma 1.24]{maugoldie}.

\begin{dfn} 
A module $M$ satisfies the \emph{ algebraic de Morgan's law} (DML) if \[\Ann_{M}(N\cap L)=\Ann_{M}(N)+\Ann_{M}(L).\] for all $N, L\in\Lambda^{fi}(M).$
\end{dfn}

\begin{ej}
Consider the $\mathbb{Z}$-module $M=\mathbb{Z}_{p^\infty}$. We claim that $M$ satisfies DML. Consider $N$ and $L$ two submodules of $M$. We have the following cases:
\begin{enumerate}
\item $N$ and $L$ are proper submodules. Then $N\cap L$ is proper in $M$. Hence $\Ann_M(N)=M=\Ann_M(L)$ and $\Ann_M(N\cap L)=M$.
\item $N=M$ and $L$ is proper. Then $N\cap L=L$ and so $\Ann_M(N\cap L)=\Ann_M(L)=M$. On the other hand, $\Ann_M(N)=0$ and $\Ann_M(L)=M$.
\item $N=M=L$. In this case, $\Ann_M(N\cap L)=0$ and $\Ann_M(N)=0=\Ann_M(L)$.
\end{enumerate}
Therefore, $M$ satisfies DML.
\end{ej}



\begin{thm}\label{Baer1}
The following conditions are equivalent for a module $M$ projective in $\sm$:
\begin{enumerate}[\rm(1)]
\item $M$ is semiprime and satisfies DML.

\item $M$ is FI-retractable and $\Ann_{M}(N_{M}L)=\Ann_{M}(N)+\Ann_{M}(L)$ for all $N,L\in\Lambda^{fi}(M)$.

\item $M$ is FI-retractable and $M=\Ann_{M}(N)\oplus \Ann_{M}(\Ann_{M}(N))$ for all $N\in\Lambda^{fi}(M)$.

\item $M$ is a FI-retractable FI-Baer module and for all $N\in\Lambda^{fi}(M)$ the projection \[\pi\colon M\rightarrow \Ann_{M}(N)\] is a central idempotent of $\End_R(M)$.

\item $M$ is semiprime and $SP(M)$ satisfies DML.

\item $M$ is semiprime and $Spec(M)$ is extremely disconnected.
\end{enumerate}

\end{thm}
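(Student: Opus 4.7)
The strategy is to translate the quantale-theoretic results of Section \ref{DML} —specifically Proposition \ref{semipri} and Theorem \ref{cuasibaer}— to the module setting by applying them to the idiomatic-quantale $A = \Lambda^{fi}(M)$. This is legitimate since $M$ is projective in $\sm$, and under the translation $\Ann_M$ plays the role of $\ann$. I would organize the argument as two cycles meeting at (1): the algebraic chain $(1) \Rightarrow (2) \Rightarrow (3) \Rightarrow (4) \Rightarrow (1)$, and the topological equivalences $(1) \Leftrightarrow (5) \Leftrightarrow (6)$. The latter follows directly from Theorem \ref{cuasibaer}, together with the identifications $\Psi(\Lambda^{fi}(M)) = SP(M)$ (from \cite[Proposition 5.19]{medina2018attaching}) and $Spec(\Lambda^{fi}(M)) = Spec(M)$ by the conventions introduced after Proposition \ref{t}.

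For the algebraic cycle, $(1) \Rightarrow (2)$ combines Proposition \ref{semipri}(1)$\Rightarrow$(2) with the fact that semiprime modules are FI-retractable (following the proof of \cite[Lemma 1.24]{maugoldie}). For $(2) \Rightarrow (3)$, I would substitute $L = \Ann_M(N)$ in (2) to obtain $M = \Ann_M(N) + \Ann_M(\Ann_M(N))$; derive semiprimality from (2) together with FI-retractability (if $N_M N = 0$, the identity forces $\Ann_M(N) = M$, hence $\Hom_R(M,N) = 0$ and $N = 0$); and then apply Lemma \ref{semiprimelemma} to $\Ann_M(\Ann_M(N))_M \Ann_M(N) = 0$ to obtain directness of the sum. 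For $(3) \Rightarrow (4)$, Lemma \ref{idem} yields centrality of the projection because both summands are fully invariant. For $(4) \Rightarrow (3)$, I would identify $(1-e)M = \Ann_M(\Ann_M(N))$ when $e$ is the central idempotent associated to $\Ann_M(N)$: for the inclusion $\subseteq$, any $g \in \Hom_R(M, \Ann_M(N))$ satisfies $g = eg = ge$ by centrality, so $g$ vanishes on $(1-e)M$; for $\supseteq$, applying the central idempotent $e$ itself, viewed as an element of $\Hom_R(M, \Ann_M(N))$, to $\Ann_M(\Ann_M(N))$ yields $0$ by the defining property of the annihilator, whence $\Ann_M(\Ann_M(N)) \subseteq \ker(e) = (1-e)M$.

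For $(4) \Rightarrow (1)$, semiprimality follows from a variant of the previous central-idempotent argument: if $N_M N = 0$ with $N$ fully invariant, then $N \leq \Ann_M(N) = eM$, and any $f \in \Hom_R(M, N)$ satisfies both $f = ef$ and $f(eM) = 0$, hence $f = 0$ by centrality of $e$, so $N = 0$ by FI-retractability. The main obstacle is deducing DML. My approach is to observe that under (4) and semiprimality the set $\{\Ann_M(N) : N \in \Lambda^{fi}(M)\}$ coincides with the set of central-idempotent direct summands of $M$ (one verifies $\Ann_M((1-e)M) = eM$), and hence forms a Boolean subalgebra of $\Lambda^{fi}(M)$. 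Via Lemma \ref{annprodinter}, DML reduces to the closure operator $N \mapsto \Ann_M(\Ann_M(N))$ preserving finite intersections, i.e., $\Ann_M(\Ann_M(N\cap L)) = \Ann_M(\Ann_M(N)) \cap \Ann_M(\Ann_M(L))$. I would establish this by working in the block decomposition of $M$ induced by the commuting central idempotents $e_N, e_L$ attached to $\Ann_M(N)$ and $\Ann_M(L)$: since $\Hom_R$ between distinct central-idempotent blocks vanishes, any FI submodule $K$ lying in both closures and in $\Ann_M(N \cap L)$ is a central-idempotent direct summand confined to a single block, where combining $K_M(N \cap L) = 0$ with semiprimality and FI-retractability forces $K = 0$. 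This last step —verifying that the double-annihilator closure is stable under meets in the presence of the central-idempotent complementation— is the technically delicate part of the proof.
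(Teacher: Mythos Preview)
Your algebraic cycle $(1)\Rightarrow(2)\Rightarrow(3)\Rightarrow(4)$ matches the paper's argument closely, and your derivation of semiprimality in $(4)\Rightarrow(1)$ is essentially the paper's. Two points deserve comment.

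First, a misidentification: $SP(M)$ is \emph{not} $\Psi(\Lambda^{fi}(M))$. In the paper's usage (and in \cite{maugoldie}), $SP(M)$ is the frame of semiprime submodules, i.e.\ the fixed points of the nucleus $\mu$ of Remark~\ref{adjunction}; one has $SP(M)\cong\mathcal{O}(Spec(M))$. The frame $\Psi$ is the regular-core construction, which is a different object. With the correct identification $SP(M)=(\Lambda^{fi}(M))_\mu$, your appeal to Theorem~\ref{cuasibaer} for $(1)\Leftrightarrow(5)\Leftrightarrow(6)$ is valid, and this is exactly how the paper proceeds (via Lemma~\ref{cuasi2} to show $\Ann_M(N)=\Ann_M(\mu(N))$ and $\neg N=\Ann_M(N)$ in $SP(M)$).

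Second, the DML half of $(4)\Rightarrow(1)$: your Boolean-algebra reduction to ``$\Ann_M\Ann_M$ preserves finite meets'' is correct but tautological---unwinding it, the statement $B\cap\Ann_M(N\cap L)=0$ (with $B=(1-e_N)(1-e_L)M$) is literally equivalent to DML in the Boolean algebra of central-idempotent summands. Your justification ``$K_M(N\cap L)=0$ with semiprimality and FI-retractability forces $K=0$'' is the crux, and as written it is a gap: $K_M(N\cap L)=0$ only says $K\subseteq\Ann_M(N\cap L)$, which is assumed. What actually closes the argument is: by Lemma~\ref{annprodinter}, $K\subseteq\Ann_M(N_ML)$, so $(K_MN)_ML=0$; since $K\subseteq(1-e_L)M$ and this summand is fully invariant, $K_MN\subseteq(1-e_L)M\cap\Ann_M(L)=0$; hence $K\subseteq\Ann_M(N)\cap(1-e_N)M=0$. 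The paper bypasses this detour by working directly with the complements $N'=(1-e_N)M$, $L'=(1-e_L)M$: it first checks $\Ann_M(N')=\Ann_M(N)$, then decomposes any $x\in\Ann_M(N'{}_ML')$ along $M=\Ann_M(N)\oplus N'$ to land in $\Ann_M(N')+\Ann_M(L')$, and finally invokes Lemma~\ref{ann1} to transfer back to $\Ann_M(N_ML)$. The two routes are equivalent in content, but the paper's is more explicit and avoids the appearance of circularity.
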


\begin{proof}
(1)$\Rightarrow$(2) Since $M$ semiprime, $M$ is FI-retractable. It follows from Lemma \ref{annprodinter} that
\[\Ann_{M}(N\cap L)=\Ann_{M}(N_{M}L)\] for all $L, N\in\Lambda^{fi}(M)$. Thus, by hypothesis
\[\Ann_{M}(N_{M}L)=\Ann_{M}(N)+\Ann_{M}(L).\]

(2)$\Rightarrow$(3) Let us first see that (2) implies that $M$ is a semiprime module. Consider $N\in\Lambda^{fi}(M)$ such that $N_{M}N=0$ then $M=\Ann_{M}(N_{M}N)=\Ann_{M}(N)+\Ann_{M}(N)=\Ann_{M}(N)$. Since $M$ is FI-retractable, $N=0$, proving that $M$ is semiprime. It follows that 
\[M=\Ann_{M}(\Ann_{M}(N)_{M}N)=\Ann_{M}(\Ann_{M}(N))+\Ann_{M}(N)\]
by (2). This decomposition is direct by Lemma \ref{semiprimelemma}.

(3)$\Rightarrow$(4) The Baer property is immediate and the central idempotent requirement follows from Lemma \ref{idem}.

(4)$\Rightarrow$(1) Let $K$ be a fully invariant submodule of $M$ such that $K_{M}K=0$. By hypothesis, $M=\Ann_{M}(K)\oplus L$. Let $\pi$ be the projection onto $\Ann_{M}(K)$, then $\pi(K)=K$. Consider any morphism $f\colon M\rightarrow K$. By hypothesis, $\pi f=f\pi$. Then 
\[f(M)=\pi f(M)= f\pi(M)=f(\Ann_M(K))=0.\]
Therefore $K=0$ because $M$ is FI-retractable. Thus $M$ is a semiprime module.

It only remains to prove that DML holds. Let $N, L\in\Lambda^{fi}(M)$ then \[M=\Ann_{M}(N)\oplus N'\;\text{ and }\; M=\Ann_{M}(L)\oplus L'.\]

We claim that $\Ann_{M}(N)=\Ann_{M}(N')$ and $\Ann_{M}(L)=\Ann_{M}(L')$. By symmetry, it is enough to prove just one of these equalities.

Observe that $\Ann_{M}(N)_MN'\subseteq \Ann_{M}(N)\cap N'=0$ thus $\Ann_{M}(N)\subseteq \Ann_{M}(N')$. Now if $K$ is a submodule such that $K_MN'=0$ then \[K\subseteq K_{M}M= \left[ K_{M}\Ann_{M}(N)\right] \oplus \left[ K_{M}N'\right] =K_M \Ann_{M}(N)\subseteq \Ann_{M}(N).\] 
Similarly, $\Ann_M(L)=\Ann_M(L')$.
Note that  $\Ann_{M}(N'_{M}L')_{M}N'\subseteq \Ann_{M}(L')$. 
Now consider $x\in \Ann_{M}(N'_ML')$ and let $e_{N'}$ denote the canonical projection onto $N'$. Then $x=e_{N'}(x)+(1-e_{N'})(x)$. It follows $(1-e_{N'})(x)\in \Ann_M(N)=\Ann_{M}(N')$. Hence $\Ann_{M}(N'_{M}L')\subseteq \Ann_{M}(L')+\Ann_{M}(N')$ and so, the equality holds. By Lemma \ref{ann1}, \[\Ann_{M}(N_ML)=\Ann_{M}(N'_ML')=\Ann_{M}(N')+\Ann_{M}(L').\] 
It follows from Lemma \ref{annprodinter} that $M$ satisfies DML.

(1)$\Rightarrow $(5),(6) 
Let $N$ be a semiprime submodule of $M$. By \cite{maugoldie}, $\Ann_M(N)$ is in $SP(M)$. Note that if $N,L\in SP(M)$, then $N_ML=N\cap L$. Since $\Ann_M(N)_MN=0$, it follows that $\neg N=\Ann_M(N)\in SP(M)$. Since $M$ satisfies DML, so does $SP(M)$. We have that $SP(M)\cong\mathcal{O}(Spec(M))$ as frames. Hence $Spec(M)$ is extremely disconnected by	 \cite{johnstone1979conditions} and \cite[Proposition 4.3.1]{rosenthal1990quantales}.

(5)$\Rightarrow $(6) This equivalence is just an application of \cite[Theorem 1]{johnstone1979conditions}, see also \cite[Proposition 4.3.1]{rosenthal1990quantales}

(5)$\Rightarrow $(1) 
Let $N,L\in\Lambda^{fi}(M).$  
Since $SP(M)$ satisfies DML. Let $L,N\in\Lambda^{fi}(M)$
Then, $\Ann_M(L\cap N)=\Ann_M(L_MN)$ by \ref{annprodinter}

First,
notice that \[0=\mu(0)=\mu(\Ann_M(N)_MN)=\mu(\Ann_M(N))_M\mu(N)\geq \Ann_M(N)_M \mu(N)\]
where $\mu$ is the nucleus given by Remark \ref{adjunction}.
Then, $\Ann_M(N)\leq \Ann_M(\mu(N)). $
It always holds $\Ann_M(\mu(N))\leq \Ann_M(N),$ then we conclude $\Ann_M(\mu(N))=\Ann_M(N)$ for each $N\in \Lambda^{fi}(M).$

Also, notice that for each $L\in SP(M)=\Lambda^{fi}(M)_{\mu},$ we have $\neg L=\Ann_M(L).$ 

Then, applying (5) and the fact that $Ann(L\cap N)=Ann(L_MN)$ by Lemma \ref{annprodinter}, we get that for all $L,N\in\Lambda^{fi}(M),$

\[\Ann_M(L\cap N)=\Ann_M(L_MN)=\Ann_M(\mu(L_MN))=\Ann_M(\mu(L)\wedge \mu(N)) \]\[=\Ann_M(\mu(L))+\Ann_M(\mu(N))=\Ann_M(L)+\Ann_M(N).\]

This is, $\Ann_M(L\cap N)=\Ann_M(L)+\Ann_M(N).$ Therefore, $M$ satisfies DML.

\end{proof}




\begin{cor}\label{psidml}
	Let $M$ be self-progenerator in $\sm$. If $M$ is semiprime and satisfies DML, then the frame $\Psi(M)$ satisfies DML. Moreover $\Psi(M)$ is a regular frame.
\end{cor}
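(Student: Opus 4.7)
The plan is to recognize this corollary as the module-theoretic shadow of Proposition \ref{cori} applied to the idiomatic-quantale $A = \Lambda^{fi}(M)$. Because $M$ is a self-progenerator in $\sm$, the preliminaries ensure that $\Lambda^{fi}(M)$ is an idiomatic-quantale; the module-theoretic semiprimeness of $M$ coincides with the quantale-theoretic semiprimeness of $\Lambda^{fi}(M)$; and the module DML is exactly DML inside $\Lambda^{fi}(M)$. Moreover, by \cite[Proposition 5.19]{medina2018attaching}, the operator $Ler$ coincides with the abstract operator $r$ of Section \ref{DML}, so that $\Psi(M) = \Psi(\Lambda^{fi}(M))$. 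Under this dictionary, Proposition \ref{cori} delivers both conclusions simultaneously.

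To make the argument essentially self-contained (without re-deriving Proposition \ref{cori}), I would first invoke Theorem \ref{Baer1} to conclude that $M$ is FI-Baer, and then apply Proposition \ref{Semiler} to deduce that $\Ann_M(N) = Ler(\Ann_M(N)) \in \Psi(M)$ for every $N \in \Lambda^{fi}(M)$. Combined with Lemma \ref{semiprimelemma}, which gives $\Ann_M(N) \cap N = 0$, this identifies $\Ann_M(N)$ with the Heyting negation $\neg N$ of $N$ inside the frame $\Psi(M)$ whenever $N \in \Psi(M)$. Then DML in $\Psi(M)$ drops out of DML in $\Lambda^{fi}(M)$: for $N, L \in \Psi(M)$,
\[\neg(N \cap L) = \Ann_M(N \cap L) = \Ann_M(N) + \Ann_M(L) = \neg N \vee \neg L.\]

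For the regularity claim I would mirror the calculation in the proof of Proposition \ref{cori}: given any $L \in \Psi(M)$ and any $N \in \Lambda^{fi}(M)$ with $L + \Ann_M(N) = M$, the containment $N\cdot\Ann_M(N) \subseteq N \cap \Ann_M(N) = 0$ forces $N \subseteq \Ann_M(\Ann_M(N))$; summing such $N$ then yields $L \leq Ler(L)$, so that $Ler^{2}=Ler$ and the regular-core iteration stabilizes immediately at $\Psi(M)$, exhibiting $\Psi(M)$ as its own regular core and hence as a regular frame.

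The main potential obstacle is the dictionary step: verifying that $\Psi(M)$, defined via the module-theoretic predicate $N + \Ann_M(Rn) = M$ for all $n \in N$, really does coincide with $\Psi(\Lambda^{fi}(M))$ defined via the abstract operator $r$, and that the rather-below relation agrees under either description. This is precisely what the self-progenerator hypothesis is designed for, and it is secured by \cite[Proposition 5.19]{medina2018attaching}; once that identification is taken for granted, the remainder is pure transport of the results established in Section \ref{DML}.
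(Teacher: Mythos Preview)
Your proposal is correct and follows essentially the same route as the paper: both invoke Theorem \ref{Baer1} to get FI-Baer, then Proposition \ref{Semiler} to place $\Ann_M(N)$ inside $\Psi(M)$, use semiprimeness to identify $\neg N=\Ann_M(N)$ there, and read off DML for $\Psi(M)$. The only difference is in the regularity step: the paper simply cites \cite[Corollary 5.22]{medina2018attaching}, whereas you unpack that citation by replaying the argument of Proposition \ref{cori} (replacing each $N$ witnessing $N\curlyeqprec L$ by $\Ann_M\Ann_M(N)\in\Psi(M)$ so that the rather-below supremum computed inside $\Psi(M)$ already returns $L$); this is the same proof, just with one external reference made explicit.
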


\begin{proof}
	By Theorem \ref{Baer1}, $M$ is FI-Baer. It follows from Proposition \ref{Semiler} that $Ler(\Ann_M(N))=\Ann_M(N)$ for all $N\in\Lambda^{fi}(M)$, that is, $\Ann_M(N)$ is in $\Psi(M)$ for all $N\in\Lambda^{fi}(M)$. Since $M$ is semiprime $\Ann_M(N)_MN=0$ implies that $\Ann_M(N)\cap N=0$. Hence, if $N\in\Psi(M)$ then $\neg N=\Ann_M(N)$ is in $\Psi(M)$. Thus $\Psi(M)$ satisfies DML. It follows from \cite[Corollary 5.22]{medina2018attaching} 
that $\Psi(M)$ is a regular frame.
	
\end{proof}

For the definition of \emph{Strongly harmonic } module see \cite{medina2018strongly}.

\begin{cor}
	Let $M$ be a self-progenerator in $\sm$. Suppose $M$ is strongly harmonic such that $\Lambda^{fi}(M)$ is compact. If $M$ is semiprime and satisfies DML then $Max^{fi}(M)$ is extremely disconnected.
\end{cor}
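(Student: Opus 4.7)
The plan is to reduce the corollary to the idiomatic–quantale version, Corollary \ref{nor}, applied to $A=\Lambda^{fi}(M)$. Since $M$ is a self-progenerator in $\sm$, $\Lambda^{fi}(M)$ is an idiomatic-quantale satisfying condition $(\star)$, and the product of fully invariant submodules is associative, distributive and subcommutative (i.e. $N_M L\leq N\wedge L$). Three of the four hypotheses of Corollary \ref{nor} are therefore immediate from our assumptions: compactness of $\Lambda^{fi}(M)$ is assumed; semiprimeness of $A$ is exactly the semiprimeness of $M$; and DML in $A$ is precisely the module-theoretic DML hypothesis.

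The remaining hypothesis is normality of the iq $\Lambda^{fi}(M)$, and this is where the strongly harmonic assumption is used. My plan is to invoke the characterizations of strongly harmonic modules from \cite{medina2018strongly}: given $N,L\in\Lambda^{fi}(M)$ with $N+L=M$, the strongly harmonic property together with compactness produces $N',L'\in\Lambda^{fi}(M)$ with $N+L'=M=N'+L$ and $N'_M L'=0$, which is exactly the normality condition needed before Corollary \ref{nor}. Thus $\Lambda^{fi}(M)$ satisfies all four hypotheses and Corollary \ref{nor} yields that $\pt\Psi(\Lambda^{fi}(M))$ is an extremely disconnected Hausdorff space.

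The final step is to identify $\pt\Psi(\Lambda^{fi}(M))$ with $\mx^{fi}(M)$. By Corollary \ref{psidml} the frame $\Psi(M)$ is regular, and since $\Lambda^{fi}(M)$ is compact so is $\Psi(M)$; by \cite[Theorem 3.5]{simmons1989compact} the points of a compact regular frame built this way correspond to the maximal elements of the underlying iq, which on the module side are exactly the maximal fully invariant submodules of $M$. Combining this homeomorphism with the extremely disconnected Hausdorff conclusion of Corollary \ref{nor} gives the result.

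The main obstacle is the normality step: translating the ring/endomorphism-flavoured definition of strongly harmonic module into the order-theoretic normality required to apply Corollary \ref{nor}. Once that translation is in place — via the equivalent formulations developed in \cite{medina2018strongly} and the compactness of $\Lambda^{fi}(M)$ — the rest is a bookkeeping exercise in transporting the conclusion across the already established homeomorphism $\pt\Psi(\Lambda^{fi}(M))\cong\mx^{fi}(M)$.
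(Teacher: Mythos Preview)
Your proposal is correct and lands on essentially the same mechanism as the paper, but the packaging differs. The paper's proof is shorter: it cites \cite[Corollary~4.27]{medina2018strongly} directly to obtain the frame isomorphism $\Psi(M)\cong\mathcal{O}(\mx^{fi}(M))$ (this is where the ``strongly harmonic $+$ compact'' hypotheses are consumed in one step), then applies Corollary~\ref{psidml} to see that $\Psi(M)$ satisfies DML, and finishes with Johnstone's characterization of extremal disconnectedness. You instead unpack that first citation: you translate ``strongly harmonic'' into normality of the iq $\Lambda^{fi}(M)$, feed this into Corollary~\ref{nor}, and then separately recover the identification $\pt\Psi(\Lambda^{fi}(M))\cong\mx^{fi}(M)$. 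Note that this last identification is already contained in the proof of Corollary~\ref{nor} (via \cite[Theorem~3.5]{simmons1989compact}), so your final paragraph is somewhat redundant with the corollary you just invoked; also, since $M$ is a self-progenerator, $\Psi(M)=\Psi(\Lambda^{fi}(M))$ by \cite[Proposition~5.19]{medina2018attaching}, so your iq-level and the paper's module-level objects coincide. In short: both arguments run the same engine (DML $\Rightarrow$ $\Psi$ regular, strongly harmonic $+$ compact $\Rightarrow$ $\Psi\cong\mathcal{O}(\mx^{fi})$, then Johnstone), but the paper bundles the normality/spatiality step into a single citation while you route it through the abstract Corollary~\ref{nor}.
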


\begin{proof}
	By \cite[Corollary 4.27]{medina2018strongly}, $\Psi(M)\cong\mathcal{O}(Max^{fi}(M))$. It follows from Corollary \ref{psidml} that $\mathcal{O}(Max^{fi}(M))$ satisfies DML. Thus $Max^{fi}(M)$ is extremely disconnected by \cite{johnstone1979conditions}.
\end{proof}

Following \cite[pp. 130]{niefield1995algebraic}, a ring $R$ is Baer if $R$ is semiprime and $\Ann(I)$ is generated by a central idempotent for every ideal $I$ of $R$.

\begin{cor}
	Let $R$ be a Baer ring. If $R$ is strongly harmonic, then $Max(R)$ is extremely disconnected.
\end{cor}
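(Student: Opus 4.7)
The plan is to reduce this corollary to the previous one by taking $M = R$ as a left module over itself, so that $\sigma[M] = R\text{-Mod}$ and $\Lambda^{fi}(M) = \Lambda^{fi}(R)$ is the lattice of two-sided ideals. The module $R$ is a self-progenerator in $\sigma[R] = R\text{-Mod}$, and $\Lambda^{fi}(R)$ is compact because the top element is $R$ itself, which is finitely (in fact singly) generated. Also $Max^{fi}(R) = Max(R)$ in this setting, and being strongly harmonic as a ring agrees with being strongly harmonic as a module over itself.

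The only real content is to check that the hypothesis \emph{$R$ is a Baer ring} translates into \emph{$R$ is semiprime and satisfies DML} in the sense of the module-theoretic definition. The Baer hypothesis gives us that for every ideal $I\leq_{fi} R$, there is a central idempotent $e\in R$ with $\Ann(I)=Re$. Centrality of $e$ implies the decomposition $R=Re\oplus R(1-e)$ is a decomposition of $R$ into two-sided ideals, and the projection $\pi\colon R\to \Ann(I)=Re$ is multiplication by $e$, which lies in the center of $R=\End_R(R)^{\op}$; equivalently it is a central idempotent of $\End_R(R)$. Hence condition (4) of Theorem \ref{Baer1} is satisfied (FI-retractability of $R$ is automatic since $\Hom_R(R,N)\cong N\neq 0$ for $0\neq N\leq_{fi} R$).

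Applying the implication (4)$\Rightarrow$(1) of Theorem \ref{Baer1} yields that $R$, as a module over itself, is semiprime and satisfies DML. All the hypotheses of the preceding corollary are then in place: $R$ is a self-progenerator in $\sigma[R]$, strongly harmonic by assumption, $\Lambda^{fi}(R)$ is compact, and $R$ is semiprime satisfying DML. That corollary gives that $Max^{fi}(R)=Max(R)$ is extremely disconnected.

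I do not foresee any genuine obstacle, since the work is essentially translation: the one small point to be careful about is confirming that ``$\Ann(I)$ is generated by a central idempotent for every ideal'' delivers exactly the central-projection condition in Theorem \ref{Baer1}(4), rather than merely the direct-summand condition of FI-Baer. Centrality of the idempotent is what is needed, and it is precisely what Niefield's definition of a Baer ring supplies.
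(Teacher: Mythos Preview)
Your proposal is correct and follows exactly the approach the paper intends: the corollary is stated without proof because it is the specialization $M=R$ of the preceding corollary, and your verification that Niefield's Baer condition (semiprime plus $\Ann(I)$ generated by a central idempotent) feeds into condition~(4) of Theorem~\ref{Baer1} is precisely the translation needed.
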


\section{Strong Algebraic De Morgan's Law}\label{sec 6}

In \cite{niefield1985strong}, the authors show that the strong algebraic de Morgan's law characterize commutative Dedekind domains, as follows:

\begin{thm}\cite[Theorem 2.4]{niefield1985strong}
The following are equivalent for a Noetherian domain $R$.
\begin{enumerate}
\item[(1)] $R$ is a Dedekind domain;
\item[(2)] $(A:B)+(B:A)=R$, for all $A,B\in \Lambda(R)$;
\item[(3)] $(A+B):C=(A:C)+(B:C)$, for all $A,B,C\in \Lambda(R)$;
\item[(4)] $A:(B\cap C)=(A:B)+(A:C)$, for all $A,B,C\in \Lambda(R)$.
\item[(5)] $\Lambda(R_P)$ is totally ordered, for every prime ideal $P$ of $R$, where $R_P$ denotes the localization of $R$ at $P$.
\item[(6)] $A\cap(B+C)=(A\cap B)+(A\cap C)$, for all $A,B,C\in\Lambda(R)$;
\item[(7)] $A(B\cap C)=AB\cap AC$, for all $A,B,C\in\Lambda(R)$;
\item[(8)] $A+(B\cap C)=(A+ B)\cap(A+ C)$, for all $A,B,C\in\Lambda(R)$.
\end{enumerate}
\end{thm}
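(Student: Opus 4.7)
The strategy is to pivot on condition (5). First I would establish $(1)\Leftrightarrow(5)$ via the classical local--global characterization of Dedekind domains: a Noetherian domain $R$ is Dedekind iff $R_P$ is a DVR for every nonzero prime $P$, iff the ideal lattice of $R_P$ is totally ordered (since the nonzero ideals of a DVR are precisely $\mathfrak{m}^n$ for $n\geq 0$). Then I would show $(5)\Rightarrow(2),(3),(4),(6),(7),(8)$ uniformly. The point is that in a Noetherian ring each of the operations $+,\cap,\cdot$ and the ideal quotient $(A:B)$ commutes with localization on finitely generated ideals, and equality of finitely generated ideals may be tested locally at every prime; since each of the identities (2)--(4), (6)--(8) holds trivially in any totally ordered lattice, (5) forces each of them at every localization $R_P$ and hence globally on $R$.

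For the converse direction I would proceed condition by condition, always using localization together with the fact that a sum of two proper ideals in a local ring is proper. Identity (2) yields (5) directly: localizing $(A:B)+(B:A)=R$ at any prime $P$ gives $(A_P:B_P)+(B_P:A_P)=R_P$, so one of the two quotients must equal $R_P$, and therefore $A_P\subseteq B_P$ or $B_P\subseteq A_P$. Identities (3) and (4) reduce to (2) by substitution: setting $C=A+B$ in (3) gives $R=(A+B):(A+B)=(A:(A+B))+(B:(A+B))=(A:B)+(B:A)$, and setting $A=B\cap C$ in (4) gives $R=(B\cap C):(B\cap C)=((B\cap C):B)+((B\cap C):C)=(C:B)+(B:C)$.

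Conditions (6), (7), (8) are the three standard equivalent forms of distributivity for a modular lattice, and since the ideal lattice of any ring is modular, each is equivalent to saying that $\Lambda(R)$ is distributive. I would then invoke the classical characterization of Pr\"ufer domains as precisely the domains with distributive ideal lattice; combined with the Noetherian hypothesis this yields a Dedekind domain, closing the cycle to (1). Alternatively, one can verify that localizing (6) with $A=(a)$, $B=(b)$, $C=(a+b)$ forces principal ideals in $R_P$ to be comparable, from which (5) follows directly in the Noetherian local case.

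The main obstacle will be the last step, from the lattice-distributivity identities (6)--(8) to (5) or (1): either one appeals to the Pr\"ufer-domain characterization or one runs a careful localization/substitution argument to promote distributivity to a total ordering of the local ideal lattice. The rest of the cycle consists of essentially routine localization manipulations together with the standard equivalences among modular distributive identities.
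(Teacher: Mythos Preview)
The paper does not prove this theorem: it is quoted verbatim from \cite{niefield1985strong} (as Theorem 2.4 there) to motivate the authors' noncommutative generalization, and no argument for it appears in the manuscript. So there is no ``paper's own proof'' to compare against; the authors later recover the equivalence of (1)--(5) in the commutative case as a corollary of their noncommutative Theorem on Asano prime rings, but the full eight-condition statement is left to the cited reference.

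Your outline is mathematically sound and follows the standard route: pivot on the local condition (5), use the classical $R$ Dedekind $\Leftrightarrow$ $R_P$ DVR for all $P$, push the identities (2)--(8) through localization, and close the loop from distributivity via the Pr\"ufer characterization plus Noetherianity. The substitutions you give for $(3)\Rightarrow(2)$ and $(4)\Rightarrow(2)$ are correct once one notes $(A:A+B)=(A:A)\cap(A:B)=(A:B)$ and similarly for the other. One small imprecision: (7) is not a purely lattice-theoretic distributive law (it involves the ideal product, not just $\cap$ and $+$), so it is not literally one of ``the three standard equivalent forms of distributivity for a modular lattice''; rather, $A(B\cap C)=AB\cap AC$ is an independent, well-known characterization of Pr\"ufer domains (Jensen), which under the Noetherian hypothesis again yields Dedekind. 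With that caveat the cycle closes as you describe.
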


Our aim is to generalize this theorem to the non-commutative case. We want to know which of those eight conditions remain equivalent for noncommutative rings and which kind of rings are characterized by them. So, let $M$ be an arbitrary $R$-module over an associative ring $R$. Given two fully invariant submodules $N$ and $L$ of $M$, let $(N:L)$ denote the following subset of $M$:
\[(N:L)=\{m\in M\mid f(m)\in N,\;\forall f\in\Hom_R(M,L)\}.\]

Some basic properties of $(N:L)$ can be found in \cite[Lemma 1.6]{beachy2019fully}, and \cite{MR3912148}. In particular, $(N:L)$ is a fully invariant submodule of $M$, and it is easy to see that $(N:\_)$ is the right adjoint of $N_M\_$ provided that $M$ is projective in $\sm$. 

Note that if $I$ and $J$ are ideals of a ring $R$, then 
\[(I:_\ell J)=\{r\in R\mid f(r)\in I,\;\forall f\in\Hom_R(M,J)\}=\{r\in R\mid rJ\subseteq I\}.\]

\begin{prop}\label{dospuntos}
Let $M$ be a module and $N,L,K\in\Lambda^{fi}(M)$. Then,
\begin{enumerate}
\item $N\geq L$ $\Rightarrow$ $(N:L)=M$.
\item $N\geq L$ $\Rightarrow$ $(N:L)\geq(L:K)$ and $(K:L)\geq(K:N)$.
\item $(N\cap L:K)=(N:K)\cap (L:K)$.
\item $(N:L+K)\subseteq (N:L)\cap (N:K)$.
\item $(N:L)=M$ $\Rightarrow$ $N\geq L$, provided that $M$ generates all its fully invariant submodules.
\item $(N:L)\cap (N:K)\subseteq (N:L+K)$, provided that $M$ is quasi-projective.
\end{enumerate}
\end{prop}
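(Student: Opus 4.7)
The plan is to handle items (1)--(4) first as formal manipulations of the definition of $(N:L)$. For (1), I would note that $N \geq L$ forces $f(M) \subseteq L \subseteq N$ for every $f \in \Hom_R(M,L)$, so every element of $M$ belongs to $(N:L)$. Then (2) follows: its first half is an immediate consequence of (1), and for its second half I would observe that any morphism $M \to L$ extends, via the inclusion $L \hookrightarrow N$, to a morphism $M \to N$, whence $m \in (K:N)$ implies $m \in (K:L)$. Item (3) is dispatched by the usual remark that $f(m) \in N \cap L$ if and only if $f(m) \in N$ and $f(m) \in L$. For (4), every $f \in \Hom_R(M,L)$ yields, by post-composition with the inclusion $L \hookrightarrow L+K$, a morphism $M \to L+K$, so $m \in (N:L+K)$ forces $f(m) \in N$; the case of $K$ is symmetric. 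For (5), I would use the hypothesis that $M$ generates every fully invariant submodule, which means $L$ coincides with its $M$-trace $\sum\{f(M) \mid f \in \Hom_R(M,L)\}$; the assumption $(N:L)=M$ says precisely that $f(M) \subseteq N$ for every such $f$, so $L \leq N$.

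The substantive point is (6), which I would reduce to the following decomposition lemma: if $M$ is quasi-projective and $L,K \leq_{fi} M$, then every $h \in \Hom_R(M, L+K)$ can be written as $h = f + g$ with $f \in \Hom_R(M,L)$ and $g \in \Hom_R(M,K)$. Granting this, $m \in (N:L) \cap (N:K)$ would immediately yield $h(m) = f(m) + g(m) \in N$, so $m \in (N:L+K)$. To produce the decomposition my plan is to work modulo $L \cap K$: the second isomorphism theorem gives the internal direct sum
\[
(L+K)/(L \cap K) \;=\; L/(L \cap K) \,\oplus\, K/(L \cap K)
\]
inside $M/(L \cap K)$ (the intersection of the two summands collapses because an element of $L$ that lies in $K + (L \cap K) = K$ already lies in $L \cap K$). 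Composing $h$ with the canonical projection $\pi \colon M \to M/(L \cap K)$ and splitting along this direct sum would yield $R$-linear maps $\bar f \colon M \to L/(L \cap K)$ and $\bar g \colon M \to K/(L \cap K)$ with $\pi h = \bar f + \bar g$.

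The key technical step, and the place where quasi-projectivity enters decisively, is the lifting of $\bar f$ and $\bar g$ back to honest morphisms into $L$ and $K$. Viewing $\bar f$ as a map $M \to M/(L \cap K)$ through the inclusion $L/(L \cap K) \hookrightarrow M/(L \cap K)$, quasi-projectivity of $M$ with respect to the epimorphism $\pi$ would produce $\tilde f \in \End_R(M)$ with $\pi \tilde f = \bar f$; since $\pi \tilde f(M) \subseteq L/(L \cap K)$, one obtains $\tilde f(M) \subseteq \pi^{-1}(L/(L \cap K)) = L$, so $\tilde f \in \Hom_R(M, L)$. The same argument would produce $\tilde g \in \Hom_R(M, K)$ with $\pi \tilde g = \bar g$. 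The remainder $h - \tilde f - \tilde g$ has image in $\ker \pi = L \cap K \subseteq K$, and absorbing it into $\tilde g$ would deliver the decomposition
\[
h \;=\; \tilde f \,+\, \bigl(\tilde g + (h - \tilde f - \tilde g)\bigr)
\]
with first summand in $\Hom_R(M,L)$ and second in $\Hom_R(M,K)$. The obstacle I anticipate lies precisely in this lifting step: quasi-projectivity only supplies lifts against epimorphisms of the form $M \to M/X$, and the trick of inflating $L/(L \cap K)$ and $K/(L \cap K)$ into the ambient $M/(L \cap K)$ so that such an epimorphism is available is what makes the full invariance of $L$ and $K$ essential, since without it the preimage $\pi^{-1}(L/(L \cap K))$ need not equal $L$.
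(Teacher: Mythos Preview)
Your argument is correct. Items (1), (3), (4), and (5) match the paper's proof essentially word for word; the paper phrases (5) via the product $M_ML$, which is exactly your $M$-trace. For the first half of (2) you take the literal statement $(N:L)\geq(L:K)$ and deduce it instantly from (1), which is valid; the paper's own proof in fact establishes the monotonicity $(N:K)\geq(L:K)$ instead, so there is a mismatch between the paper's statement and its proof, and your reading of the printed statement is the defensible one.

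The one genuine difference is in (6). The paper simply invokes \cite[18.4]{wisbauerfoundations} for the identity $\Hom_R(M,L+K)=\Hom_R(M,L)+\Hom_R(M,K)$ under quasi-projectivity and then finishes in one line. You instead reprove this decomposition from scratch: pass to $M/(L\cap K)$, split $(L+K)/(L\cap K)$ as an internal direct sum, lift each component through $\pi$ using quasi-projectivity, and absorb the defect $h-\tilde f-\tilde g\in\Hom_R(M,L\cap K)$ into the $K$-piece. This is exactly the standard proof of the cited result, so your route is more self-contained but not conceptually new. One small inaccuracy in your closing remark: the equality $\pi^{-1}(L/(L\cap K))=L$ holds for any submodule $L$ containing $L\cap K$ and does not require full invariance; full invariance of $L,K$ is not actually used anywhere in your proof of (6), and indeed Wisbauer's 18.4 is stated for arbitrary submodules.
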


\begin{proof}
\textit{(1)} Since $L\leq N$, $f(M)\leq L\leq N$ for all $f\in\Hom_R(M,L)$. Thus, $M=(N:L)$.

\textit{(2)} Suppose $L\leq N$. Let $x\in (L:K)$ and $f:M\to K$. Then $f(x)\in L\leq N$. This implies that $x\in (N:K)$. Thus, $(L:K)\leq (N:K)$. Now, let $x\in(K:N)$, i.e., $f(x)\in K$ for all $f\in\Hom_R(M,N)$. Let $g\colon M\to L$ be any homomorphism. Since $L\leq N$, $g$ can be seen as a homomorphism $g:M\to N$. Then $g(x)\in K$. Thus, $x\in (K:L)$ and so, $(K:N)\leq (K:L)$.

\textit{(3)} By definition 
\[(N\cap L:K)=\{m\in M\mid f(m)\in N\cap L\;\forall f\in\Hom_R(M,K)\}.\]
Hence, $(N\cap L:K)\leq (N:K)\cap (L:K)$. On the other hand, let $x\in (N:K)\cap (L:K)$. Then, for each $f\in\Hom_R(M,K)$ we have that $f(x)\in N$ and $f(x)\in L$. Thus, $x\in (N\cap L:K)$.

\textit{(4)} Let $m\in (N:K+L)$. Since $\Hom_R(M,L)\subseteq \Hom_R(M,K+L)$, $f(m)\in N$ for all $f\in\Hom_R(M,L)$. Analogously, $g(m)\in M$ for all $g\in\Hom_R(M,K)$. Then $m\in (N:L)\cap(N:K)$.

\textit{(5)} If $(N:L)=M$, then $M_ML\leq N$. By hypothesis, $L=M_ML$. Thus, $L\leq N$.

\textit{(6)} By \cite[18.4]{wisbauerfoundations}, $\Hom_R(M,L+K)=\Hom_R(M,L)+\Hom_R(M,K)$. Let $x\in (N:L)\cap (N:K)$ and $f\in\Hom_R(M,L+K)$. Then, there exist $g\in\Hom_R(M,L)$ and $h\in\Hom_R(M,K)$ such that $f=g+h$. Then $f(x)=g(x)+h(x)\in N$. Thus, $x\in (N:L+K)$.
\end{proof}

\begin{prop}\label{sdmls}
Consider the following conditions for fully invariant submodules $N,L,K$ of $M$:
\begin{itemize}
\item[SDML] $(N:L)+(L:N)=M$.
\item[SDML1] $(N+L):K=(N:K)+(L:K)$.
\item[SDML2] $N:(L\cap K)=(N:L)+(N:K)$.
\end{itemize}
Then,
\begin{enumerate}
\item \emph{SDML2} $\Rightarrow$ \emph{SDML}.
\item If $M$ quasi-projective and generates all its fully invariant submodules, then \emph{SDML2} implies that $\Lambda^{fi}(M)$ is distributive.
\item If $\Lambda^{fi}(M)$ is a quantale, then the three conditions are equivalent.
\end{enumerate}
\end{prop}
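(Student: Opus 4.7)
\textbf{Part (1)} is a one-step substitution in SDML2, taking $(N,L,K)\mapsto(N\cap L,N,L)$: Proposition~\ref{dospuntos}(1) and (3), together with $(N:N)=(L:L)=M$, reduce the resulting identity to $M=(L:N)+(N:L)$.

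\textbf{Part (2)} I would attack by a multiplicative expansion. The only nontrivial inclusion is $A\cap(B+C)\le (A\cap B)+(A\cap C)$. Since $M$ generates every fully invariant submodule, $M_{M}(A\cap(B+C))=A\cap(B+C)$. By part~(1), $M=(A:B)+(B:A)=(A:C)+(C:A)$, and under quasi-projectivity the paper records that $N_{M}(L+K)=N_{M}L+N_{M}K$; the corresponding identity in the first argument follows at once from the definition of the product. Hence
\[M=M_{M}M=\bigl((A:B)+(B:A)\bigr)_{M}\bigl((A:C)+(C:A)\bigr)=P_{1}+P_{2}+P_{3}+P_{4},\]
where the $P_{i}$ are the four products $X_{M}Y$ with $X\in\{(A:B),(B:A)\}$ and $Y\in\{(A:C),(C:A)\}$. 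Multiplying on the right by $A\cap(B+C)$ and using first-variable distributivity yields $A\cap(B+C)=\sum_{i}(P_{i})_{M}(A\cap(B+C))$; it then suffices to bound each summand by $(A\cap B)+(A\cap C)$. The workhorse is the inequality $(X:Y)_{M}Y\subseteq X\cap Y$, immediate from the definition of $(X:Y)$. For $P_{1}=(A:B)_{M}(A:C)$ I would write a typical element as $\sum g_{j}(x_{j})$ with $g_{j}:M\to(A:C)$, $x_{j}\in(A:B)$; given $f:M\to B$, the image $\sum(fg_{j})(x_{j})$ lies in $A\cap B$ because $fg_{j}:M\to B$ pairs with $x_{j}\in(A:B)$, and analogously $(P_{1})_{M}C\le A\cap C$, so $(P_{1})_{M}(B+C)\le (A\cap B)+(A\cap C)$. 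For each of $P_{2},P_{3},P_{4}$ I would instead use the bound $A\cap(B+C)\le A$ and apply the same trick on whichever of $(B:A)$ or $(C:A)$ appears, exploiting the variants $(B:A)_{M}A\le A\cap B$ and $(C:A)_{M}A\le A\cap C$.

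\textbf{Part (3).} SDML2$\Rightarrow$SDML is part~(1), and SDML1$\Rightarrow$SDML follows by setting $K=N+L$ in SDML1 and invoking Proposition~\ref{dospuntos}(1),(2). Under the quantale hypothesis I would use associativity together with the adjoint property $X_{M}L\le N\iff X\le(N:L)$ (equivalently the counit $(N:L)_{M}L\le N$) to close the loop. For SDML$\Rightarrow$SDML2, expand $(N:L\cap K)=(N:L\cap K)_{M}M$ along $M=(L:K)+(K:L)$; associativity and two counit applications give
\[\bigl((N:L\cap K)_{M}(L:K)\bigr)_{M}K=(N:L\cap K)_{M}\bigl((L:K)_{M}K\bigr)\le(N:L\cap K)_{M}(L\cap K)\le N,\]
so by the adjunction $(N:L\cap K)_{M}(L:K)\le(N:K)$, and symmetrically the $(K:L)$-piece lies in $(N:L)$. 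The proof of SDML$\Rightarrow$SDML1 is the mirror: write $(N+L:K)=M_{M}(N+L:K)$, split $M=(N:L)+(L:N)$, and bound each resulting term via $\bigl((N:L)_{M}(N+L:K)\bigr)_{M}K\le(N:L)_{M}(N+L)\le N$, using $(N:L)_{M}N\le N$ trivially and $(N:L)_{M}L\le N$ from the counit.

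\textbf{Main obstacle.} The technical difficulty is concentrated in part~(2): without associativity one cannot rewrite the outer product $(P_{i})_{M}(A\cap(B+C))$ by reassociating, so the four cross-terms must be bounded by direct element-level inspection, choosing in each case the correct factor of $P_{i}$ and the sharper of the bounds $A\cap(B+C)\le B+C$ or $A\cap(B+C)\le A$. Once associativity is available in part~(3), the same identity becomes a short chain of counit applications and the adjunction.
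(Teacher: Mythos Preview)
Your Part~(1) is exactly the paper's argument.

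Your Part~(2) is correct but takes a genuinely different route. The paper never touches the product: it applies SDML2 directly to compute $(N:(N+L)\cap(N+K))=(N:N+L)+(N:N+K)$, shows this equals $M$ whenever $L\cap K\subseteq N$, and then invokes Proposition~\ref{dospuntos}(5),(6) to conclude $(N+L)\cap(N+K)\subseteq N$; general distributivity follows by replacing $N$ with $N+(L\cap K)$. Your four-term multiplicative expansion also works, but you have to track each cross-term $P_i$ at the element level precisely because associativity is unavailable. The paper's approach buys brevity by staying entirely in the colon calculus; yours is more hands-on and makes the role of SDML (rather than SDML2) explicit.

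For Part~(3) the paper simply cites \cite{niefield1995algebraic}, so your explicit arguments add content. Your SDML2$\Rightarrow$SDML, SDML1$\Rightarrow$SDML, and SDML$\Rightarrow$SDML2 steps are fine: the identity $X_{M}M=X$ that you use in the last one always holds for $X\in\Lambda^{fi}(M)$, since $\mathrm{id}_M\in\End_R(M)$ gives $X\subseteq X_{M}M$ and full invariance gives the reverse. The gap is in your SDML$\Rightarrow$SDML1: you begin with $(N+L:K)=M_{M}(N+L:K)$, but $M_{M}X=\sum\{f(M)\mid f:M\to X\}$ is only the $M$-trace of $X$, and this equals $X$ exactly when $M$ generates $X$. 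That is \emph{not} guaranteed by the hypothesis ``$\Lambda^{fi}(M)$ is a quantale''. Your subsequent bound $\bigl((N:L)_{M}(N+L:K)\bigr)_{M}K\le(N:L)_{M}(N+L)\le N$ is correct, so the obstruction is solely this opening identification. The ``mirror'' is not automatic: the colon $(N:L)$ is adjoint to $\text{\underline{\hspace{1.2ex}}}\,{}_{M}L$, not to $L_{M}\,\text{\underline{\hspace{1.2ex}}}$, so switching sides costs you the unit property. Replacing left multiplication by right multiplication, $(N+L:K)=(N+L:K)_{M}M$, is valid but then the pieces $(N+L:K)_{M}(N:L)$ no longer admit the bound you want. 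You should either add the generating hypothesis here (matching Part~(2)) or locate an argument that avoids $M_{M}X=X$ altogether.
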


\begin{proof}
\textit{(1)} Let $N,L\in\Lambda^{fi}(M)$ and suppose SDML2. By Proposition \ref{dospuntos}\textit{(3)} and \textit{(1)}, it follows that:
\[(N\cap L):N=(N:N)\cap (L:N)=M\cap (L:N)=(L:N),\]
and
\[(N\cap L):L=(N:L)\cap (L:L)=(N:L)\cap M=(N:L).\]
Therefore, 
\[(N:L)+(L:N)=((N\cap L):L)+((N\cap L):N)=(N\cap L):(L\cap N)=M.\]

\textit{(2)} Let $N,L,K\in\Lambda^{fi}(M)$ and suppose $L\cap K\subseteq N$. By Proposition \ref{dospuntos} and SDML2,
\begin{equation*}
\begin{split}
(N:(N+L)\cap(N+K)) & =(N:N+L)+(N:N+K) \\
& \subseteq ((N:N)\cap(N:L))+((N:N)+(N:K)) \\ 
& =(N:L)+(N:K) \\ 
& =N:(L\cap K) \\
& =M.
\end{split}
\end{equation*}
Since $M$ is quasi-projective and generates all its fully invariant submodules, by Proposition \ref{dospuntos}\textit{(5)} and \textit{(6)}, we have that $(N:(N+L)\cap(N+K))=M$. Therefore, $(N+L)\cap(N+K)\subseteq N$. Since always $N\subseteq (N+L)\cap(N+K)$, $N=(N+L)\cap(N+K)$. Now, let $N,L,K\in\Lambda^{fi}(M)$ any fully invariant submodules. Then $L\cap K\subseteq N+(L\cap K)$. By the above, 
\[N+(L\cap K)=(L+N+(L\cap K))\cap(K+N+(L\cap K))=(N+L)\cap(N+K).\]

\textit{(3)} It follows from \cite[Proposition 3.2]{niefield1995algebraic} and the paragraph below that proposition. 
\end{proof}

Suppose that $S$ is a multiplicatively closed subset of $R$ consisting of regular elements such that $S^{-1}R$ exists. Recall that the \emph{extension} of an ideal $I$ of $R$ is given by the subset $I^e=\{s^{-1}a\mid s\in S\;a\in I\}\subseteq S^{-1}R$. If $R$ is Noetherian then $I^e$ is an ideal of $S^{-1}R$ \cite[Theorem 10.18]{goodearl2004introduction}.

\begin{lem}\label{extdospuntos}
Let $I$ and $J$ be ideals of the Noetherian ring $R$ and let $S$ be a multiplicatively closed subset of $R$ consisting of regular elements such that $S^{-1}R$ exists. Then $(I^e:J^e)\subseteq (J:I)^e$.
\end{lem}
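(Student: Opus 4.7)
The plan is to adapt the standard commutative ``clearing of denominators'' argument to the noncommutative Ore setting: given $y\in(I^e:J^e)$, the goal is to produce a common denominator $u\in S$ such that $uy$, reinterpreted in $R$, witnesses the desired colon inclusion.

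First I would write $y=s^{-1}a$ with $a\in R$ and $s\in S$, using the left Ore presentation of elements of $S^{-1}R$. Invoking the Noetherian hypothesis, fix a finite set of generators of $J$ as a right ideal, say $J=b_1R+\cdots+b_nR$ with each $b_i\in J$. For each $i$ the element $yb_i=s^{-1}(ab_i)$ lies in $I^e$, and a standard property of Ore localizations at a set of regular elements asserts that $z\in I^e$ if and only if there exists $u\in S$ with $uz\in I$. Applying this to each $yb_i$ produces elements $u_i\in S$ with $u_iab_i\in I$.

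The next step is to build a single denominator serving all $i$ at once. By an inductive use of the left Ore condition one produces $u\in S$ lying simultaneously in $Ru_1\cap\cdots\cap Ru_n$; writing $u=w_iu_i$ with $w_i\in R$ and using that $I$ is a left ideal yields $uab_i=w_i(u_iab_i)\in I$ for every $i$. Since $I$ is also a right ideal and $J=\sum b_iR$, this gives $ua\cdot J\subseteq\sum(uab_i)R\subseteq I$, so $ua$ lies in the colon ideal appearing in the statement, and therefore $y=s^{-1}a=(us)^{-1}(ua)$ presents $y$ as an element of the corresponding extension.

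The main obstacle is producing the simultaneous denominator $u\in S\cap\bigcap_iRu_i$: this is where one genuinely needs both the Ore condition (supplied by the assumption that $S^{-1}R$ exists) and the Noetherian hypothesis, which bounds the intersection to finitely many indices. A secondary subtlety is that one must choose the $b_i$ as \emph{right}-ideal generators of $J$, so that $uab_i\in I$ upgrades to $uab_iR\subseteq I$ via the fact that $I$ is two-sided; choosing left-ideal generators would leave a nontrivial gap at the last step.
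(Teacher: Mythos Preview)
Your proof is correct and follows the same approach as the paper's: fix right-ideal generators $b_1,\dots,b_n$ of $J$, clear denominators so that a left multiple of $a$ (with the multiplier yielding an element of $S$ against $s$) carries each $b_i$ into $I$, and re-express $s^{-1}a$ with the new denominator; the only cosmetic difference is that the paper interleaves the clearing with the induction over $i$ (rewriting $s^{-1}a=(x_i\cdots x_1s)^{-1}(x_i\cdots x_1a)$ at each step via the common-denominator lemma) while you first handle each $b_i$ separately and then build a common left multiple $u\in S\cap\bigcap_iRu_i$ at the end. One remark: the conclusion as printed, $(J:I)^e$, is a typo for $(I:J)^e$ --- the paper's own proof ends with $x_n\cdots x_1a\in(I:J)$, and its application in Theorem~\ref{asano} uses $(A^e:B^e)\subseteq(A:B)^e$ --- and your computation $ua\cdot J\subseteq I$ correctly gives $ua\in(I:J)$, not $(J:I)$.
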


\begin{proof}
Since $R$ is Noetherian, we can assume $J=b_1R+\cdots+b_nR$. Let $s^{-1}a\in (I^e:J^e)$, then $s^{-1}ab_1\in I^e$. Hence, there exists $c\in I$ and $t\in S$ such that $s^{-1}ab_1=t^{-1}c$. This implies that there exist $x_1,y_1\in R$ such that $x_1ab_1=y_1c\in I$ and $x_1s=y_1t\in S$ \cite[Lemma 6.1]{goodearl2004introduction}. Note that $(x_1s)^{-1}x_1a=s^{-1}a\in (I^e:J^e)$. Then, for $b_2$ there exists $x_2\in R$ such that $x_2x_1ab_2\in I$ and $x_2x_1s\in S$. Moreover $(x_2x_1s)^{-1}x_2x_1a=s^{-1}a\in (I^e:J^e)$. Doing this for each $b_i$, there exist $x_1,\dots,x_n\in R$ and such that $x_i\cdots x_1ab_i\in I$ and $x_i\cdots x_1s\in S$ for all $1\leq i\leq n$. Thus, $(x_n\cdots x_1a)(b_1+\cdots +b_n)\in I$. This implies that $x_n\cdots x_1a\in (I:J)$, and so $s^{-1}a=(x_n\cdots x_1s)^{-1}x_n\cdots x_1a\in (I:J)^e$.
\end{proof}

\begin{prop}\label{idealesiguales}
Let $R$ be a Noetherian ring such that each maximal ideal of $R$ is localizable, and let $I$ and $J$ be ideals of $R$. If $IR_P=JR_P$ for each maximal ideal $P$, then $I=J$.
\end{prop}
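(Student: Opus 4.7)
The strategy is to reduce the problem to showing that a certain finitely generated module vanishes. Set $N=(I+J)/I$, which is a finitely generated left $R$-module (by Noetherianity) and in fact an $R$-bimodule since $I$ and $J$ are two-sided. Localization is right exact and commutes with sums, so for each maximal ideal $P$,
\[
N\otimes_{R}R_{P}\;\cong\;(IR_{P}+JR_{P})/IR_{P}\;=\;0,
\]
using the hypothesis $IR_{P}=JR_{P}$. If I can show that a finitely generated left $R$-module whose localization vanishes at every maximal ideal must itself be zero, then $J\subseteq I$; reversing the roles of $I$ and $J$ gives the opposite inclusion, hence $I=J$.

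The key step is the following claim: for a finitely generated left $R$-module $N$, the localization $N_{P}$ is zero if and only if the two-sided annihilator $\Ann_{R}(N)=\{r\in R\mid rN=0\}$ meets the Ore set $C(P)$ of elements regular modulo $P$. One direction is immediate from the description of the kernel of $N\to N_{P}$ as $\{n\in N\mid tn=0\text{ for some }t\in C(P)\}$. For the converse, given generators $n_{1},\dots,n_{k}$ of $N$ with $t_{i}n_{i}=0$ and $t_{i}\in C(P)$, I iterate the left Ore condition: for any $t_{i},t_{j}\in C(P)$ there exist $s\in C(P)$ and $r\in R$ with $st_{i}=rt_{j}$, yielding a common left multiple inside $C(P)$. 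Inductively this produces a single $t\in C(P)$ with $tn_{i}=0$ for every $i$, so $t\in\Ann_{R}(N)\cap C(P)$.

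To finish, observe that $\Ann_{R}((I+J)/I)=\{r\in R\mid rJ\subseteq I\}$ is a \emph{two-sided} ideal of $R$, because $J$ is two-sided. If this annihilator were proper, Noetherianity would place it inside some maximal ideal $P_{0}$, which by hypothesis is localizable. The claim above then yields some $t\in\Ann_{R}(N)\cap C(P_{0})$; but $t\in P_{0}$ forces $t+P_{0}=0$ in the nonzero ring $R/P_{0}$, contradicting regularity of $t$ modulo $P_{0}$. Hence $\Ann_{R}(N)=R$, so $N=0$ and $J\subseteq I$; symmetry gives $I=J$.

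The main obstacle I anticipate is the common-left-multiple construction inside $C(P)$: this is where the Ore axiom does the real work, and one must be careful that multiplicative closure of $C(P)$ keeps the common multiple inside the Ore set. Everything else amounts to bookkeeping with annihilators and a standard appeal to Zorn/Noether to embed a proper two-sided ideal in a maximal one.
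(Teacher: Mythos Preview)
Your strategy coincides with the paper's: both arguments show that the two-sided ideal $(I:_\ell J)=\Ann_R\bigl((I+J)/I\bigr)$ meets $\mathcal{C}_R(P)$ for every maximal $P$, hence lies in no maximal ideal and must equal $R$. The paper does this by explicit element-chasing; you repackage the same computation via the bimodule $N=(I+J)/I$.

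There is, however, one genuine noncommutative slip. You take generators $n_1,\dots,n_k$ of $N$ as a \emph{left} $R$-module, manufacture a single $t\in\mathcal{C}(P)$ with $tn_i=0$ for all $i$, and then assert $t\in\Ann_R(N)$. But for an arbitrary $n=\sum_i r_in_i$ you only obtain $tn=\sum_i (tr_i)n_i$, and nothing forces $(tr_i)n_i=0$; killing a set of left generators on the left does not kill the left module they generate. The repair is to use \emph{right}-module generators of the bimodule $N$: write $J=b_1R+\cdots+b_kR$ (right Noetherianity), so that $tb_i\in I$ for all $i$ gives $tJ=\sum_i tb_iR\subseteq I$, i.e.\ $tN=0$. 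This is exactly what the paper does---it writes $I=a_1R+\cdots+a_nR$ and iterates over the right generators, exploiting that $I$ is also a left ideal so that each intermediate product $t\cdot a_i$ stays in $I$ and can again be pushed into $J$ by a further $\mathcal{C}(P)$-element. With that correction (and writing $R_P\otimes_R N$ rather than $N\otimes_R R_P$, to match your left-torsion description of the kernel), your argument is complete and is essentially the paper's proof in module-theoretic language.
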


\begin{proof}
Let $P$ be a maximal ideal of $R$ and let $\lambda:R\to R_P$ denote the canonical homomorphism. If $I^e=IR_P=JR_P=J^e$ then for each $a\in I$ there exist $b\in J$ and $s,u\in\mathcal{C}_R(P)$ such that $\lambda(u)^{-1}\lambda(b)=\lambda(s)^{-1}\lambda(a)$. It follows form \cite[Lemma 10.1]{goodearl2004introduction} that there exist $r\in R$ and $t\in\mathcal{C}_R(P)$ such that $rb=ta$ and $ru=ts$. Since $b\in J$, then $ta\in J$. Therefore, for each $a\in I$ there exists $t^a_P\in\mathcal{C}_R(P)$ such that $t^a_Pa\in J$ for all maximal ideals $P$ of $R$.

Since $R$ is Noetherian, we can write $I=a_1R+a_2R+\cdots+a_nR$ for some $a_i\in I$. Fix a maximal ideal $P$ of $R$ and consider the element $t_P^{a_1}\in\mathcal{C}_R(P)$. Then 
\[t_P^{a_1}I=t_P^{a_1}a_1R+t_P^{a_1}(a_2R+\cdots+a_nR)\subseteq J+t_P^{a_1}a_2R+\cdots t_P^{a_1}a_nR.\]
Now, set $x_2=t_P^{a_1}a_2\in I$. Consider $t_P^{x_2}\in\mathcal{C}_R(P)$ such that $t_P^{x_2}x_2\in J$. Then,
\begin{equation*}
\begin{split}
t_P^{x_2}t_P^{a_1}I & \subseteq t_P^{x_2}J+t_P^{x_2}t_P^{a_1}a_2R+t_P^{x_2}(t_P^{a_1}a_3R\cdots t_P^{a_1}a_nR) \\
& \subseteq J+t_P^{x_2}t_P^{a_1}a_3R\cdots t_P^{x_2}t_P^{a_1}a_nR.
\end{split}
\end{equation*}
Continuing in this way, we get an element $u_P:=t_P^{x_n}\cdots t_P^{x_2}t_P^{a_1}\in\mathcal{C}_R(P)$ such that $u_PI\subseteq J$. This implies that for every maximal ideal $P$ of $R$, $(J:I)\cap\mathcal{C}_R(P)\neq\emptyset$. Therefore $R=(I:J)$ and so $I\subseteq J$. Symmetrically, $J\subseteq I$. 
\end{proof}

\begin{cor}\label{totalloc}
Let $R$ be a Noetherian ring such that each maximal ideal of $R$ is localizable, and let $I$ be an ideal of $R$. If $I^e=R_P$ for each maximal ideal $P$, then $I=R$.
\end{cor}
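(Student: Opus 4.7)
The plan is to deduce this statement as an essentially immediate consequence of Proposition \ref{idealesiguales}. The key observation is that the extension of $R$ itself to the localization $R_P$ equals $R_P$: indeed, writing any element $s^{-1}a \in R_P$ as $s^{-1}a$ with $a \in R$, we see that $R_P = R^e$, where $R^e$ denotes the extension of the ideal $R$ of $R$.

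Given this, I would set $J = R$ in Proposition \ref{idealesiguales}. The hypothesis of that proposition requires $I R_P = J R_P$ for every maximal ideal $P$ of $R$, which becomes $I^e = R^e = R_P$ for every maximal $P$; this is precisely what we are assuming in the corollary. The conclusion of the proposition then gives $I = J = R$, which is exactly what we want.

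There is essentially no obstacle here beyond observing that $R^e = R_P$ and that the hypothesis on localizability of maximal ideals is inherited directly from the assumption. The proof therefore reduces to a single invocation of the preceding proposition.
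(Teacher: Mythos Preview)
Your argument is correct and is exactly the intended one: the corollary is the special case $J=R$ of Proposition~\ref{idealesiguales}, using that $R^{e}=R_{P}$.
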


\begin{lem}
Let $P$ be a localizable prime ideal of the prime Goldie ring $R$. Then, $R$ is $\mathcal{C}_R(P)$-torsionfree, here $\mathcal{C}_R(P)$ denotes the set of regular elements modulo $P$.
\end{lem}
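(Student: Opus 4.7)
The plan is to show that the $\mathcal{C}_R(P)$-torsion submodule
\[ t=\{a\in R \mid ca=0\ \text{ for some }\ c\in\mathcal{C}_R(P)\} \]
is the zero ideal. The strategy is to produce a regular element of $R$ inside $t$ using Goldie's theorem, and then derive a contradiction with the regularity of elements of $\mathcal{C}_R(P)$.

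First I would verify that $t$ is a two-sided ideal of $R$. That $t$ is a right ideal is immediate: if $ca=0$ and $r\in R$, then $c(ar)=0$, so $ar\in t$. To see it is also a left ideal, I would use the left Ore condition for $\mathcal{C}_R(P)$ (part of the hypothesis that $P$ is localizable): given $a\in t$ with $ca=0$ and $r\in R$, apply left Ore to $c$ and $r$ to obtain $c'\in\mathcal{C}_R(P)$ and $r'\in R$ with $c'r=r'c$; then $c'(ra)=r'(ca)=0$, so $ra\in t$.

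Now suppose for contradiction that $t\neq 0$. Since $R$ is prime, every nonzero two-sided ideal is essential as a right ideal: for any nonzero right ideal $J$, the product $Jt$ lies in $J\cap t$ and is nonzero by primeness. Hence $t$ is an essential right ideal, and Goldie's theorem for semiprime Goldie rings guarantees that $t$ contains a regular element $d$. Choose $c\in\mathcal{C}_R(P)$ with $cd=0$; then the regularity of $d$ forces $c=0$, contradicting $c\in\mathcal{C}_R(P)$ (because $R/P$ is a nonzero ring and $0+P$ is always a zero-divisor there). Therefore $t=0$, and a symmetric argument using the right Ore condition handles the right-hand torsion. The only nontrivial step is Goldie's extraction of a regular element from $t$; once that is in place, the annihilator argument closes the proof in one line, so I do not anticipate a serious obstacle.
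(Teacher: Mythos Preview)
Your proof is correct and follows essentially the same line as the paper's own argument: assume the torsion ideal is nonzero, use primeness to see it is essential, extract a regular element via Goldie's theorem, and obtain a contradiction from $cd=0$ with $d$ regular. The paper is terser, simply asserting that $t_P(R)$ is an ideal and essential, whereas you supply the routine verifications (the left Ore condition for closure under left multiplication, and the $jRa\subseteq Jt$ trick for essentiality); these extra details are correct and do not change the strategy.
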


\begin{proof}
Let $t_P(R)$ denote the $\mathcal{C}(P)$-torsion ideal of $R$. If $0\neq t_P(R)$, then it is essential in $R$ as left (and right) ideal because $R$ is prime. Hence there is a $c\in\mathcal{C}_R(0)$ with $c\in t_P(R)$, since $R$ is Goldie. Therefore, there exists $s\in\mathcal{C}_R(P)$ such that $sc=0$. Since $c$ is regular, $s=0$ which is a contradiction. Thus, $t_P(R)=0$.
\end{proof}

Under the hypothesis of last lemma, we can identify $R$ with its image in $R_P$. Moreover, $\mathcal{C}_R(P)\subseteq\mathcal{C}_R(0)$ and hence $R_P$ is a subring of $Q_{cl}(R)$. We write the following statements for the convenience of the reader.

\begin{thm}[\cite{mcconnell2001noncommutative}, 5.2.6]
Let $R$ be a prime Goldie ring. The following conditions are equivalent:
\begin{enumerate}
\item[(1)] each nonzero submodule of a (left or right) progenerator is a generator;
\item[(2)] $R$ is a maximal order and each ideal is finitely generated projective as a left or right module;
\item[(3)] $R$ is a maximal order and each ideal is reflexive;
\item[(4)] each nonzero ideal of $R$ is invertible.
\end{enumerate}
\end{thm}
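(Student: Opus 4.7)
The plan is to prove this classical equivalence by closing a cycle $(4) \Rightarrow (2) \Rightarrow (3) \Rightarrow (4)$ and then handling $(1) \Leftrightarrow (4)$ separately. Throughout I would work inside the classical quotient ring $Q = Q_{cl}(R)$, which exists and is simple Artinian because $R$ is prime Goldie. For a nonzero fractional ideal $I \subseteq Q$, I set $O_\ell(I) = \{q \in Q \mid qI \subseteq I\}$, $O_r(I) = \{q \in Q \mid Iq \subseteq I\}$ and $I^{-1} = \{q \in Q \mid IqI \subseteq I\}$, and I recall that $R$ is a \emph{maximal order} when $O_\ell(I) = R = O_r(I)$ for all such $I$, while $I$ is \emph{invertible} when $II^{-1} = R = I^{-1}I$.

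For $(4) \Rightarrow (2)$: given invertibility of $I$, writing $1 = \sum_{i=1}^n a_i b_i$ with $a_i \in I$ and $b_i \in I^{-1}$ yields a dual basis, since for any $x \in I$ one has $x = \sum a_i (b_i x)$ with $b_i x \in I^{-1}I = R$; this shows $I$ is finitely generated projective on both sides. The same identity also forces $O_\ell(I) \subseteq II^{-1} = R$ (and symmetrically on the right), and an analogous argument extended to fractional ideals gives that $R$ is a maximal order. The step $(2) \Rightarrow (3)$ is immediate: any finitely generated projective module is reflexive, so in particular finitely generated projective ideals are reflexive as fractional ideals inside $Q$.

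The crucial step is $(3) \Rightarrow (4)$. The strategy is to show that in a maximal order, the set of reflexive (i.e.\ divisorial) fractional ideals forms a group under the modified product $I * J := (IJ)^{**}$, with inverse $I^{-1}$ and identity $R$. Maximality of the order is exactly what makes $O_\ell(I) = R = O_r(I)$, which in turn is equivalent to $(I^{-1}I)^{**} = R = (II^{-1})^{**}$. If every ideal is already reflexive then $(IJ)^{**} = IJ$, so the starred product reduces to the ordinary product and invertibility follows. This equivalence between the double-annihilator closure and the two-sided order conditions is the technical heart and where I expect most of the work to lie.

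Finally, for $(1) \Leftrightarrow (4)$ I would pass through Morita theory. For $(4) \Rightarrow (1)$, let $P$ be a progenerator and $0 \neq N \leq P$; since $S = \End_R(P)$ is Morita equivalent to $R$, inherits the property that every nonzero ideal is invertible, and since $\Hom_R(P, N)$ is a left ideal of $S$, its trace in $P$ (which coincides with $N$ via the counit of the Morita equivalence) generates $P$ because one can use an invertibility relation to explicitly exhibit surjective maps $N^k \twoheadrightarrow P$. Conversely, for $(1) \Rightarrow (4)$, apply the hypothesis to the progenerator $R \oplus I$: the submodule corresponding to $I$ must be a generator, which in the prime Goldie context is enough to extract an invertibility relation $II^{-1} = R$. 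The main obstacle is genuinely the $(3) \Rightarrow (4)$ implication, where correctly matching left and right fractional ideals inside $Q$ and exploiting the maximal-order hypothesis requires the most care.
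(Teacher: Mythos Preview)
The paper does not prove this theorem at all: it is quoted verbatim from \cite[5.2.6]{mcconnell2001noncommutative} with the explicit remark ``We write the following statements for the convenience of the reader,'' and no proof is given. So there is no ``paper's own proof'' to compare your proposal against; the authors simply cite the result and use it as a black box (notably to feed into Theorem~\ref{asano}).

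That said, your sketch follows the standard architecture one finds in McConnell--Robson and related treatments. The cycle $(4)\Rightarrow(2)\Rightarrow(3)\Rightarrow(4)$ is the usual route, and you have correctly identified $(3)\Rightarrow(4)$ as the substantive step. One place where your outline is thin: in $(1)\Rightarrow(4)$, saying ``apply the hypothesis to the progenerator $R\oplus I$'' and that the submodule ``corresponding to $I$'' being a generator yields invertibility is not quite enough on its own; you need to extract both $II^{-1}=R$ and $I^{-1}I=R$, and the generator condition most directly gives you information about the trace ideal, so some additional argument (typically using that $R$ is already a maximal order, or a left/right symmetry argument) is needed to close the loop. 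Similarly, in $(4)\Rightarrow(2)$ you should be explicit that maximality as an order must be checked for \emph{all} fractional $R$-ideals, not just integral ones, which requires a small reduction. None of this is a fatal gap, but if you intend to write a self-contained proof rather than a sketch, those are the points that will need fleshing out.
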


\begin{dfn}
A prime Goldie ring satisfying the conditions of last theorem is called an \emph{Asano prime ring}.
\end{dfn}

\begin{thm}[\cite{mcconnell2001noncommutative}, 5.2.9]\label{prodmax}
If $R$ is an Asano prime ring then each nonzero ideal is a unique (commutative) product of maximal ideals.
\end{thm}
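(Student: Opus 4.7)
The plan is to prove the theorem in three stages: existence of a factorization of every nonzero ideal into maximal ideals, commutativity of the ideal product on distinct maximal ideals, and uniqueness of the factorization. Throughout I would lean on the equivalent characterizations of Asano prime rings just stated: $R$ is a maximal order, every nonzero ideal is invertible and finitely generated projective (so in particular $R$ satisfies ACC on two-sided ideals).

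For existence, I would argue by Noetherian descent. Given a proper nonzero ideal $I$, choose a maximal ideal $M_{1}$ containing $I$. Invertibility provides a two-sided fractional inverse $M_{1}^{-1}$ with $M_{1}M_{1}^{-1}=M_{1}^{-1}M_{1}=R$, so $M_{1}^{-1}I\subseteq R$ is itself an ideal. If $M_{1}^{-1}I=I$, then $I=M_{1}I$, and multiplying on the right by $I^{-1}$ forces $M_{1}=R$, a contradiction. Hence $I\subsetneq M_{1}^{-1}I$; iterating with $M_{1}^{-1}I$ in place of $I$ produces a strictly ascending chain of ideals that must terminate at $R$ by ACC, yielding $I=M_{1}M_{2}\cdots M_{n}$.

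For commutativity of distinct maximal ideals $M,N$, observe first that $M+N=R$, since $M+N$ is an ideal properly containing the maximal $M$. I would then invoke the fact that in a maximal order the group of invertible two-sided fractional ideals is abelian; a hands-on derivation shows $M\cap N=MN=NM$ by combining $M\cap N=(M+N)(M\cap N)\subseteq M(M\cap N)+N(M\cap N)\subseteq NM+MN$ with the reverse containments $MN,NM\subseteq M\cap N$ and cancelling via $M^{-1}$ and $N^{-1}$.

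For uniqueness, suppose $M_{1}\cdots M_{n}=N_{1}\cdots N_{m}$ with all factors maximal. If some $M_{i}$ differs from every $N_{j}$, the coprimalities $M_{i}+N_{j}=R$ combined with the commutativity of step two iterate to $M_{i}+N_{1}\cdots N_{m}=R$; since $N_{1}\cdots N_{m}\subseteq M_{i}$, this forces $M_{i}=R$, absurd. Hence the multisets $\{M_{i}\}$ and $\{N_{j}\}$ coincide, and cancelling common factors via invertibility equates multiplicities. The principal obstacle is the commutativity step: mere invertibility of nonzero ideals does not force the ideal product to be commutative, so one must genuinely use the maximal-order condition, the deepest of the equivalent conditions in the cited theorem, which is precisely what makes the word ``commutative'' in the conclusion meaningful in the noncommutative setting.
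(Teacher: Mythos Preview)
The paper does not give its own proof of this theorem: it is quoted verbatim from McConnell--Robson \cite[5.2.9]{mcconnell2001noncommutative} and used as a black box (in the proof of Theorem~\ref{asano}). So there is nothing to compare your attempt against here; the authors simply cite the literature.

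As to the substance of your sketch: the existence argument via ACC on two-sided ideals and invertibility is standard and fine, and your uniqueness argument is correct (the step ``$M_i+N_j=R$ for all $j$ implies $M_i+N_1\cdots N_m=R$'' holds by the usual inductive computation $(M_i+N_1)(M_i+N_2\cdots N_m)\subseteq M_i+N_1N_2\cdots N_m$). The commutativity step, however, is not self-contained. Your ``hands-on derivation'' only yields $M\cap N=MN+NM$, and the phrase ``cancelling via $M^{-1}$ and $N^{-1}$'' does not finish it: cancelling $M$ on the left of $MN+NM$ gives $N+M^{-1}NM$, not $N$. You are right that the honest route is to invoke the fact that in a maximal order the invertible fractional $R$-ideals form an \emph{abelian} group (this is precisely what is established in the lead-up to 5.2.9 in McConnell--Robson), and that this is where the maximal-order hypothesis is genuinely used. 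If you want to present this as a proof rather than a proof outline, you should either import that result explicitly or reproduce its argument, rather than gesturing at a cancellation that does not work as stated.
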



\begin{thm}[\cite{chatters1972localisation}, Theorem 1.3]\label{orepinv}
Let $R$ be Noetherian ring and let $P$ be an invertible prime ideal of $R$. Then $R$ satisfies the Ore condition with respect to $\mathcal{C}_R(P)$.
\end{thm}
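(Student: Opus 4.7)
The goal is to produce, for every $a \in R$ and $c \in \mathcal{C}_R(P)$, elements $a' \in R$ and $c' \in \mathcal{C}_R(P)$ satisfying $c'a = a'c$; the right Ore condition then follows symmetrically since invertibility of $P$ is two-sided. My plan rests on three ingredients, classical in the non-commutative Noetherian setting when one localizes at an invertible prime.

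First, I would establish the Artin--Rees (AR) property for $P$: for every left ideal $I$ of $R$ there exists $n \geq 1$ with $I \cap P^n \subseteq IP$. Invertibility forces $P$ to be finitely generated and projective on each side, so the Rees ring $S = \bigoplus_{k \geq 0} P^k t^k$ is left Noetherian; the graded submodule $\bigoplus_k (I \cap P^k)t^k$ is then finitely generated over $S$, which is the standard way to extract AR for the filtration $(I\cap P^k)_k$.

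Second, I would show that $\mathcal{C}_R(P) \subseteq \mathcal{C}_R(P^n)$ for all $n \geq 1$. Using the short exact sequence
\[0 \to P^{n-1}/P^n \to R/P^n \to R/P^{n-1} \to 0\]
and induction on $n$, it suffices to check that left multiplication by $c$ is injective on $P^{n-1}/P^n$ whenever $c + P$ is regular in $R/P$. This is where invertibility of $P$ is decisive: $P^{n-1}/P^n$ is an invertible $(R/P)$-bimodule, with two-sided inverse furnished by $P^{-(n-1)}\otimes_R(-)$, and in particular a progenerator on each side over $R/P$. Regularity of $c+P$ on $R/P$ therefore transfers to injectivity of $c\cdot$ on $P^{n-1}/P^n$, and a snake-lemma argument completes the induction.

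Third, given $a$ and $c$, consider $\bar R = R/P^n$. Its prime radical $\bar P = P/P^n$ is nilpotent, and by Step~2 the image $\bar c$ of $c$ is regular in $\bar R$. By Small's theorem, $\bar R$ then possesses a classical Artinian left quotient ring, so $\mathcal{C}_{\bar R}(0)$ is left Ore in $\bar R$; this yields $a_n, c_n \in R$ with $c_n \in \mathcal{C}_R(P)$ and $c_n a - a_n c \in P^n$. To finish, I would apply the AR property of Step~1 to the left ideal $Rc$: choosing $n$ with $Rc \cap P^n \subseteq (Rc)P$, an iterative correction absorbs the error $c_n a - a_n c$ into a modification of $a_n$, producing an exact equality $c'a = a'c$ in $R$. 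The main obstacle is precisely this last lifting---the delicate bookkeeping that uses $Rc \cap P^n \subseteq (Rc)P$ to promote the approximate Ore identity modulo $P^n$ to an equality in $R$ while keeping $c' \notin P$; one transparent framework is to assemble the successive approximations in the $P$-adic completion and then argue that they descend to an honest Ore partner in $R$ by a convergence/compactness argument.
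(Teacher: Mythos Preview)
The paper does not supply its own proof of this statement: Theorem~\ref{orepinv} is quoted verbatim from \cite{chatters1972localisation} and invoked only as a black box, together with \cite[Proposition~10.7]{goodearl2004introduction}, to obtain Corollary~\ref{primosloc}. There is therefore no in-paper argument to compare your proposal against.

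For what it is worth, your sketch is recognisably the classical route due to Chatters and reproduced in the standard references (McConnell--Robson, Goodearl--Warfield): (i)~an invertible ideal has the left and right AR property, via Noetherianity of the Rees ring; (ii)~invertibility makes each layer $P^{k-1}/P^{k}$ an invertible $R/P$-bimodule, so regularity modulo $P$ propagates to regularity modulo every $P^{n}$; (iii)~Small's theorem then equips each $R/P^{n}$ with an Artinian classical quotient ring, whence the Ore condition holds there; (iv)~AR is used to lift the approximate Ore relation to an exact one in $R$. Steps (i)--(iii) are correctly formulated. Your caveat about step~(iv) is well placed, and one detail deserves sharpening: applying AR to the left ideal $I=Rc$ gives $Rc\cap P^{n}\subseteq Pc$, but the error term $c_{n}a-a_{n}c$ lies in $P^{n}$ with no reason to lie in $Rc$, so that inclusion cannot be invoked as you wrote it. In the textbook treatments one instead organises the argument around the descending family $I_{n}=\{r\in R: ra\in Rc+P^{n}\}$ (each of which meets $\mathcal{C}_R(P)$ by step~(iii)) and uses AR together with step~(ii) to produce an element of $\mathcal{C}_R(P)$ in $\{r:ra\in Rc\}$ itself. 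Your $P$-adic-completion suggestion is a legitimate alternative packaging of the same bookkeeping, though heavier than what Chatters' original argument needs.
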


\begin{cor}\label{primosloc}
Let $R$ be a Noetherian ring. If $R$ is an Asano prime ring then, every prime ideal of $R$ is localizable.
\end{cor}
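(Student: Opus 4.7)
The plan is to reduce the corollary directly to Theorem \ref{orepinv} after splitting on whether $P$ is zero or nonzero. The definition of an Asano prime ring immediately supplies the invertibility hypothesis needed for nonzero $P$, so the work amounts to packaging the two cases and verifying that ``localizable'' here means precisely the Ore condition with respect to $\mathcal{C}_R(P)$.

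First I would treat the nonzero case, which is where the Asano hypothesis is used in an essential way. Fix a prime ideal $P$ of $R$ with $P\neq 0$. By definition of an Asano prime ring (condition (4) of the theorem from \cite{mcconnell2001noncommutative}, 5.2.6), every nonzero ideal of $R$ is invertible, and in particular $P$ is an invertible prime ideal. Since $R$ is Noetherian by hypothesis, Theorem \ref{orepinv} applies directly and gives that $R$ satisfies the Ore condition with respect to $\mathcal{C}_R(P)$; hence $P$ is localizable.

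Next I would dispose of the case $P=0$. An Asano prime ring is by definition a prime Goldie ring, so Goldie's theorem guarantees the existence of the classical ring of quotients $Q_{cl}(R)$; equivalently, $\mathcal{C}_R(0)$ is an Ore set in $R$. Therefore the zero prime is also localizable, and combining the two cases finishes the corollary.

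There is no serious obstacle here: the statement is essentially a repackaging of the Asano definition through Chatters' theorem, and the only small point worth being explicit about is the treatment of $P=0$, which is sometimes excluded from the definition of ``localizable prime'' but is handled by Goldie's theorem exactly when $R$ is prime Goldie, as it is in our setting.
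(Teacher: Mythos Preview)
Your argument is correct and follows essentially the same route as the paper: the paper's proof simply cites Theorem~\ref{orepinv} (Chatters) together with \cite[Proposition 10.7]{goodearl2004introduction}, which is exactly your reduction of the nonzero case to invertibility plus the Ore condition. Your explicit treatment of $P=0$ via Goldie's theorem is a welcome clarification that the paper leaves implicit in its citation.
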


\begin{proof}
It follows from Theorem \ref{orepinv} and \cite[Proposition 10.7]{goodearl2004introduction}.
\end{proof}

\begin{lem}\label{trasladados}
Let $R$ be such that the product of ideals is commutative. Then, $(I:_\ell J)=(I:_rJ)$ for all ideals $I,J$ of $R$.
\end{lem}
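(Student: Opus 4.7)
The plan is to reduce the elementwise containment conditions $rJ\subseteq I$ and $Jr\subseteq I$ that define $(I:_\ell J)$ and $(I:_r J)$ to statements about the principal two-sided ideal $(r)=RrR$, and then to invoke the commutativity of the ideal product.

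First I would verify the two (symmetric) equivalences
\[
rJ\subseteq I\iff (r)J\subseteq I\qquad\text{and}\qquad Jr\subseteq I\iff J(r)\subseteq I.
\]
The implication $\Leftarrow$ in each case is immediate, since $r\in(r)$ because $R$ is unital. For $\Rightarrow$ in the first equivalence: a generator of $(r)J=(RrR)J$ has the form $(arb)c$ with $a,b\in R$ and $c\in J$; since $J$ is a two-sided ideal, $bc\in RJ\subseteq J$, so $r(bc)\in rJ\subseteq I$, and since $I$ is a two-sided ideal, $a\cdot r(bc)\in RI\subseteq I$. Hence every generator, and so every element, of $(r)J$ lies in $I$. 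The same argument with the sides swapped, using $JR\subseteq J$ and $IR\subseteq I$, handles the second equivalence.

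With these reformulations in hand,
\[
(I:_\ell J)=\{r\in R\mid (r)J\subseteq I\},\qquad (I:_r J)=\{r\in R\mid J(r)\subseteq I\},
\]
and the hypothesis that the product of ideals is commutative gives $(r)J=J(r)$, so the two sets coincide.

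There is no real obstacle here: the only point requiring a moment of care is the passage between the element-level inclusions ($rJ\subseteq I$) and the ideal-product inclusions ($(r)J\subseteq I$), which is carried out by the short bookkeeping above.
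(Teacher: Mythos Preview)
Your argument is correct. The paper's proof uses the same idea---reduce to a product of two-sided ideals and invoke commutativity---but organizes it more globally: rather than passing through the principal ideal $(r)$ for each element, it uses that $(I:_\ell J)$ is itself a two-sided ideal, so $(I:_\ell J)J\subseteq I$ immediately gives $J(I:_\ell J)\subseteq I$ by the hypothesis, whence $(I:_\ell J)\subseteq (I:_r J)$, and symmetrically for the reverse inclusion. Your version trades that single observation for the elementwise bookkeeping with $(r)=RrR$; both work, and yours has the mild advantage of not needing to first observe that $(I:_\ell J)$ is two-sided.
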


\begin{proof}
Let $I$ and $J$ ideals of $R$. Then $(I:_\ell J)$ is an ideal and $(I:_\ell J)J\subseteq I$. By hypothesis, $J(I:_\ell I)\subseteq I$. Hence, $(I:_\ell J)\subseteq (I:_rJ)$. Analogously, $(I:_rJ)\subseteq (I:_\ell J)$.
\end{proof}

%

\begin{prop}\label{sdmlasano}
Let $R$ be a prime Noetherian ring. Suppose that each ideal $I$ can be written as $Rc_1+\cdots+Rc_m=I=d_1R+\cdots +d_nR$ with $Rc_i$ and $d_iR$ ideals for $1\leq j\leq m$ and $1\leq i\leq n$. If the product of ideals of $R$ is commutative and $R$ satisfies \emph{SDML}, then $R$ is an Asano prime ring.
\end{prop}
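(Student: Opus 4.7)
The plan is to show every nonzero ideal of $R$ is invertible, which by definition makes $R$ an Asano prime ring. I proceed in three stages: upgrade SDML to distributivity of the ideal lattice, localize at each maximal ideal to obtain chain ideal lattices, and lift invertibility back to $R$ via a local-to-global argument.

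First, since $R$ is projective in $\sigma[R]=R\text{-Mod}$, the lattice $\Lambda^{fi}(R)$ of two-sided ideals is an idiomatic-quantale. Moreover $R$ is self-projective and trivially generates every one of its two-sided ideals, so by Proposition \ref{sdmls}(3) our hypothesis SDML is equivalent to SDML2, and then by Proposition \ref{sdmls}(2) the lattice $\Lambda^{fi}(R)$ is distributive. This places us in the setting of a distributive quantale of ideals with commuting product, in which one can hope to emulate the commutative Dedekind argument.

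The central step is to show that each maximal ideal $P$ of $R$ is invertible. Writing $P=d_1R+\cdots+d_nR$ with each $d_iR$ a two-sided ideal, SDML applied to $P$ against comparison ideals, combined with distributivity of $\Lambda^{fi}(R)$ and the commutativity of products, should let one build an inverse of $P$ inside $Q_{cl}(R)$; the two-sided finite-generation hypothesis provides the reflexivity needed for the candidate inverse to actually multiply $P$ back to $R$. Granted invertibility of $P$, Theorem \ref{orepinv} gives that $P$ is localizable. At the localization $R_P$, SDML descends: for ideals $I,J$ of $R_P$, pulling back to $R$, applying SDML there, and extending (using Lemma \ref{extdospuntos} and Lemma \ref{trasladados} to identify left and right quotients) yields $(I:J)+(J:I)=R_P$. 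Since $R_P$ is local with unique maximal $PR_P$, one of these summands must equal $R_P$, forcing $I\subseteq J$ or $J\subseteq I$. Thus $\Lambda^{fi}(R_P)$ is totally ordered, and in a local Noetherian prime ring with chain ideal lattice every nonzero ideal is a power of the maximal, hence invertible. Finally, for any nonzero ideal $I$ of $R$, one checks $II^{-1}=R$ locally at each maximal via Corollary \ref{totalloc}, giving the required global invertibility and completing the proof that $R$ is Asano.

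The main obstacle is the invertibility of a single maximal ideal of $R$, which is the bridge between the purely lattice-theoretic hypothesis SDML and the concrete ring-theoretic invertibility statement inside $Q_{cl}(R)$. The commutativity of products and the distributivity derived from SDML are what permit the commutative Dedekind-style argument to be carried out, while the two-sided finite-generation hypothesis is precisely what furnishes the reflexivity and makes the Chatters-type Ore argument work non-commutatively; getting this step right is the delicate point on which the whole proof hinges.
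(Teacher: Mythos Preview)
Your proposal has a genuine gap at precisely the point you yourself flag as ``the main obstacle'': you never actually prove that a maximal ideal $P$ is invertible. You write that SDML ``should let one build an inverse of $P$ inside $Q_{cl}(R)$,'' but no construction is given; invoking distributivity and two-sided finite generation as ``what permits'' the argument is not a proof. Since your entire localization scheme (via Theorem~\ref{orepinv}) rests on first having $P$ invertible, the circularity is fatal: you need invertibility to localize, and you propose to deduce invertibility from the localized picture. The final local-to-global step is also incomplete---Corollary~\ref{totalloc} concerns ideals of $R$, and you have not explained what candidate $I^{-1}\subseteq Q_{cl}(R)$ you are testing or why $II^{-1}$ is an ideal to which that corollary applies.

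The paper's proof avoids all of this by a direct, elementary induction on the number of right-ideal generators $d_1R,\dots,d_nR$, with no localization at all. The base case $I=d_1R$ is invertible because $d_1$ is regular in the prime ring. For two generators $J=d_1R$, $K=d_2R$, SDML gives $1=x+y$ with $x\in(J:K)$, $y\in(K:J)$; using commutativity of ideal products one checks the key identity
\[
JK=(xK+yJ)(J+K),
\]
and since $JK$ is left-invertible (being a product of principal two-sided ideals with regular generators), so is $J+K$. The same trick handles the inductive step $L+d_nR$ once $L$ is known to be invertible. This factoring identity is the actual content that your sketch is missing; it is what converts the lattice-theoretic hypothesis SDML into a concrete invertibility statement, without any appeal to localization, reflexivity, or local-to-global transfer.
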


\begin{proof}
Let $Rc_1+\cdots+Rc_m=I=d_1R+\cdots +d_nR$ be an ideal of $R$ with $Rc_i$ and $d_iR$ ideals for $1\leq j\leq m$ and $1\leq i\leq n$. 
We proceed by induction on $n$ to prove that $I$ is left invertible. Note that if $I=d_1R$. Since $I$ is an ideal $d_1\in\mathcal{C}_R(0)$. Hence $(Rd_1^{-1})I=(Rd_1^{-1})d_1R=R$. Thus, $I$ is left invertible. Let $0\neq I=d_1R+d_2R$ and denote $J=d_1R$ and $K=d_2R$. By Lemma \ref{trasladados}, $(J:_\ell K)=(J:_rK)$, so we just write $(J:K)$.  Assume $J\neq 0$ and $K\neq 0$. By hypothesis, $R=(J:K)+(K:J)$ and so $1=x+y$ with $x\in(J:K)$ and $y\in (K:J)$. Hence, $xK\subseteq J$ and $yJ\subseteq K$. Therefore, $xK^2\subseteq JK$ and $yJ^2\subseteq KJ$. Thus,
\[(xK+yJ)(K+J)=xK^2+yJK+xKJ+yJ^2\subseteq JK+KJ=JK,\]
since the product of ideals is commutative. On the other hand,
\[JK=xJK+yJK=xKJ+yJK\subseteq (xK+yJ)(K+J).\]
Then, $JK=(xK+yJ)(K+J)$. Since $J$ and $K$ are ideals, $d_1,d_2\in\mathcal{C}_R(0)$. Consider $Rd_2^{-1}Rd_1^{-1}$. Then,
\[(Rd_2^{-1}Rd_1^{-1})JK=(Rd_2^{-1}Rd_1^{-1})d_1Rd_2R=Rd_2^{-1}Rd_2R=Rd_2^{-1}d_2R=R,\]
since $d_2R$ is an ideal. Therefore, $JK$ is left invertible. This implies that $I=J+K$ is left invertible. Now suppose $n\geq 3$ and $I=d_1R+\cdots d_nR$. By induction hypothesis, $L=d_1R+\cdots+d_{n-1}R$ is left invertible, that is, there exists, and $R$-ideal $L^{-1}$ such that $L^{-1}L=R$. Since $d_nR$ is an ideal, $d_n\in\mathcal{C}_R(0)$. As we did above, $d_nRL=(xd_nR+yL)(d_nR+L)$. Then
\[L^{-1}Rd_n^{-1}d_nRL=L^{-1}RL=R.\]
Thus, $I=L+d_nR$ is left invertible. Analogously, by induction over $m$, $I$ is right invertible. Then $I$ is invertible and so $R$ is a prime Asano ring.
\end{proof}

\begin{obs}\label{commhyp}
Note that any commutative Noetherian domain satisfies the general hypothesis of Proposition \ref{sdmlasano} by \cite[Corollary 3.3.7]{mcconnell2001noncommutative}.
\end{obs}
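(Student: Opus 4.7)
The remark is essentially a sanity check on the hypotheses of Proposition \ref{sdmlasano}, and my plan is to verify its two structural assumptions in turn for a commutative Noetherian domain $R$, each one becoming trivial once commutativity is exploited.

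First I would address the decomposition assumption: every ideal $I$ of $R$ must admit expressions $Rc_1+\cdots+Rc_m = I = d_1R+\cdots+d_nR$ in which each $Rc_j$ and each $d_iR$ is itself a two-sided ideal. In a commutative ring any principal one-sided ideal $Rc$ coincides with $cR$, so it is automatically a (two-sided) ideal. Noetherianity then provides a finite generating set $c_1,\dots,c_n \in R$ with $I = Rc_1+\cdots+Rc_n$, and setting $d_i := c_i$ yields the symmetric right-handed decomposition $I = c_1R+\cdots+c_nR$ with every summand a two-sided ideal. Next I would verify commutativity of the ideal product: for ideals $I,J \leq R$, the product $IJ$ is generated by elements $ab$ with $a \in I$, $b \in J$, and each such $ab$ equals $ba$ by commutativity of $R$, so $IJ$ and $JI$ have the same generating set and therefore coincide.

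Together these two observations give exactly the hypotheses invoked in Proposition \ref{sdmlasano}, which is what the remark asserts; the citation \cite[Corollary 3.3.7]{mcconnell2001noncommutative} packages the same facts inside the McConnell--Robson framework. I do not anticipate any obstacle: the whole point of the remark is to confirm that Proposition \ref{sdmlasano} does recover (the noncommutative analogue of) the classical Niefield characterization of commutative Dedekind domains listed at the start of Section \ref{sec 6}, so the verification is meant to be immediate from elementary commutative algebra rather than to require substantive argument.
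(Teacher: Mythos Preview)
Your proposal is correct: the two structural hypotheses of Proposition \ref{sdmlasano} reduce, in the commutative case, to the elementary observations that principal ideals are automatically two-sided and that Noetherianity supplies finite generating sets, together with the trivial commutativity of ideal products. The paper itself gives no argument beyond the citation to \cite[Corollary 3.3.7]{mcconnell2001noncommutative}, so your direct verification is more self-contained than, but entirely consistent with, what the remark intends.
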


\begin{thm}\label{asano}
Consider the following conditions for a Noetherian prime ring $R$:
\begin{enumerate}
\item[(1)] $R$ is an Asano prime ring;
\item[(2)] Every prime ideal $P$ of $R$ is localizable and each ideal $I\in \Lambda^{fi}(R_P)$ is a power of $J(R_P)$, the Jacobson radical of $R_P$, where $R_P$ denotes the localization of $R$ at $P$. In particular, $\Lambda^{fi}(R_P)$ is totally ordered.
\item[(3)] The product in $\Lambda^{fi}(R)$ is commutative and 
\[(A:B)+(B:A)=R,\] 
for all $A,B\in \Lambda^{fi}(R)$;
\item[(4)] The product in $\Lambda^{fi}(R)$ is commutative and 
\[(A+B):C=(A:C)+(B:C),\]
for all $A,B,C\in \Lambda^{fi}(R)$;
\item[(5)] The product in $\Lambda^{fi}(R)$ is commutative and 
\[A:(B\cap C)=(A:B)+(A:C),\]
for all $A,B,C\in \Lambda^{fi}(R)$;
\end{enumerate}
Then, $(1)\Rightarrow(2)\Rightarrow(3)\Leftrightarrow(4)\Leftrightarrow(5)$. If each ideal $A$ can be written as $Rc_1+\cdots+Rc_m=A=d_1R+\cdots +d_nR$ with $Rc_i$ and $d_iR$ ideals for $1\leq j\leq m$ and $1\leq i\leq n$ then all the conditions are equivalent.
\end{thm}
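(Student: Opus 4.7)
The plan is to prove the cycle $(1)\Rightarrow(2)\Rightarrow(3)$, then invoke Proposition \ref{sdmls}\textit{(3)} for $(3)\Leftrightarrow(4)\Leftrightarrow(5)$, and finally close with Proposition \ref{sdmlasano} for the reverse $(3)\Rightarrow(1)$ under the extra generating hypothesis. Throughout, we rely heavily on the localization technology already assembled in the paper, in particular Corollary \ref{primosloc}, Proposition \ref{idealesiguales}, Corollary \ref{totalloc} and Lemma \ref{extdospuntos}.

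For $(1)\Rightarrow(2)$, Corollary \ref{primosloc} furnishes localizability of every prime ideal $P$ of $R$. The key next step is that the localization $R_P$ is again an Asano prime Noetherian ring whose unique nonzero prime is $P^e = J(R_P)$; this rests on the fact that in a Noetherian Asano prime ring every nonzero prime is maximal, together with the observation that invertibility of ideals is preserved under Ore localization. Once this is in place, Theorem \ref{prodmax} forces every nonzero ideal of $R_P$ to be a product of maximal ideals, hence a power of $J(R_P)$, and $\Lambda^{fi}(R_P)$ is automatically totally ordered.

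For $(2)\Rightarrow(3)$, fix ideals $A,B$ of $R$. By Corollary \ref{totalloc} it is enough to show $((A:B)+(B:A))^e=R_P$ for every maximal ideal $P$. In $R_P$ the ideal lattice is totally ordered, so one of $(A^e:B^e)$ or $(B^e:A^e)$ equals $R_P$, and the intended direction of Lemma \ref{extdospuntos}, namely $(A^e:B^e)\subseteq (A:B)^e$, lifts this to
\[
R_P\subseteq (A:B)^e+(B:A)^e\subseteq ((A:B)+(B:A))^e.
\]
Commutativity of the ideal product is handled the same way: $(AB)^e = A^eB^e$ and $(BA)^e = B^eA^e$ are both powers of $J(R_P)$, hence coincide in each $R_P$, and Proposition \ref{idealesiguales} then yields $AB=BA$ in $R$.

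The equivalences $(3)\Leftrightarrow(4)\Leftrightarrow(5)$ are a direct application of Proposition \ref{sdmls}\textit{(3)} to the quantale $\Lambda^{fi}(R)$, since the commutativity clause is common to all three statements. The implication $(3)\Rightarrow(1)$ under the generating hypothesis is exactly the content of Proposition \ref{sdmlasano} and requires nothing new. The main obstacle will be $(1)\Rightarrow(2)$: pinning down that in a Noetherian Asano prime ring every nonzero prime is maximal and that the Asano property genuinely descends to $R_P$, so that the reduction to the local case produces a ring with a totally ordered lattice of ideals. A secondary technical issue is the bookkeeping in $(2)\Rightarrow(3)$, where Lemma \ref{extdospuntos} must be used in the direction $(A^e:B^e)\subseteq (A:B)^e$ rather than the printed one.
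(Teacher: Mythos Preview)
Your overall architecture matches the paper exactly: $(1)\Rightarrow(2)\Rightarrow(3)$, then the quantale equivalence for $(3)\Leftrightarrow(4)\Leftrightarrow(5)$ (the paper cites \cite[Proposition 3.2]{niefield1995algebraic} directly, which is the same content as Proposition~\ref{sdmls}\textit{(3)}), and Proposition~\ref{sdmlasano} for $(3)\Rightarrow(1)$. Your treatment of $(2)\Rightarrow(3)$ is line-for-line the paper's argument, and your remark that Lemma~\ref{extdospuntos} must be read as $(A^e:B^e)\subseteq (A:B)^e$ is correct---the stated conclusion $(J:I)^e$ in that lemma is a typo for $(I:J)^e$, as its own proof shows.

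The one genuine divergence is in $(1)\Rightarrow(2)$. You propose to show that $R_P$ is itself a Noetherian Asano prime ring and then apply Theorem~\ref{prodmax} \emph{inside} $R_P$. The paper does not do this: it never claims $R_P$ is Asano. Instead it applies Theorem~\ref{prodmax} \emph{in $R$}, writing an arbitrary ideal $A$ of $R$ as a commuting product $\mathcal{M}_1^{\alpha_1}\cdots\mathcal{M}_n^{\alpha_n}$ of maximal ideals, and then computes the extension: for $\mathcal{M}_i\neq P$ one has $\mathcal{M}_i\cap\mathcal{C}_R(P)\neq\emptyset$, so $\mathcal{M}_i^e=R_P$, whence $A^e=(P^e)^{\alpha_j}=J(R_P)^{\alpha_j}$. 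Since every ideal of $R_P$ is of the form $A^e$, this gives the result. Your route is conceptually clean but saddles you with two extra verifications the paper avoids---that $R_P$ is prime Goldie and that invertibility of ideals survives the passage to $R_P$---neither of which is proved or cited in the paper. The paper's computation trades those structural facts for a direct calculation with extensions, which is cheaper given what has already been set up.
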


\begin{proof}
\textit{(1)}$\Rightarrow$\textit{(2)} By Corollary \ref{primosloc}, each prime ideal of $R$ is localizable. Moreover, by Proposition \ref{prodmax} each nonzero prime ideal is maximal. Let $P$ be any prime ideal of $R$. If $P=0$, then the classical ring of quotients $Q$ of $R$ is simple and hence $\Lambda^{fi}(Q)$ is totally ordered. Suppose $P\neq 0$, and so $P$ is maximal. Note that any ideal $A$ such that $A\nsubseteq P$ contains an element regular modulo $P$. Let $A$ and $B$ ideals of $R$. By Proposition \ref{prodmax} there exist natural numbers $\alpha_1,\dots,\alpha_n,\beta_1,\dots,\beta_n$ and distinct maximal ideals of $R$, $\mathcal{M}_1,\dots,\mathcal{M}_n$ such that $A=\mathcal{M}_1^{\alpha_1}\cdots\mathcal{M}_n^{\alpha_n}$ and $B=\mathcal{M}_1^{\beta_1}\cdots\mathcal{M}_n^{\beta_n}$. If $P\neq \mathcal{M}_i$ for all $1\leq i\leq n$, then $\mathcal{C}_R(P)\cap\mathcal{M}_i\neq\emptyset$. This implies that $A^e=R_P=B^e$. So, assume $P=\mathcal{M}_j$ for some $1\leq j\leq n$. Then,
\begin{equation*}
\begin{split}
A^e & =(\mathcal{M}_1^{\alpha_1}\cdots\mathcal{M}_n^{\alpha_n})^e \\
& =(\mathcal{M}_1^{\alpha_1})^e\cdots(\mathcal{M}_n^{\alpha_n})^e \\
& =R_P\cdots R_P(\mathcal{M}_j^{\alpha_j})^eR_P\cdots R_P \\
& =(P^e)^{\alpha_j} \\
& =J(R_P)^{\alpha_j}.
\end{split}
\end{equation*}
Analogously, $B^e=J(R_P)^{\beta_j}$. Since any ideal $I$ of $R_P$ has the form $I=A^e$ for some ideal $A$ of $R$, this proves \textit{(2)}.

\textit{(2)}$\Rightarrow$\textit{(3)} Let $A,B\in\Lambda^{fi}(M)$. Then, for each maximal ideal of $R$, 
\[(AB)^e=A^eB^e=J(R_P)^nJ(R_P)^k=B^eA^e=(BA)^e.\]
By Proposition \ref{idealesiguales}, $AB=BA$. On the other hand, by Lemma \ref{extdospuntos} $(A^e:B^e)\subseteq(A:B)^e$ and $(B^e:A^e)\subseteq (B:A)^e$ for each maximal ideal of $R$. Therefore,
\[(A^e:B^e)+(B^e:A^e)\subseteq (A:B)^e + (B:A)^e=((A:B) + (B:A))^e,\]
for each maximal ideal of $R$. By hypothesis, $A^e\subseteq B^e$ or $B^e\subseteq A^e$ for each maximal ideal of $R$. This implies that $(A^e:B^e)=R_P$ or $(B^e:A^e)=R_P$ for each maximal ideal $P$ of $R$. Hence $((A:B) + (B:A))^e=R_P$ for all maximal ideal $P$ of $R$. By Corollary \ref{totalloc}, $(A:B) + (B:A)=R$.

$(3)\Leftrightarrow(4)\Leftrightarrow(5)$ Follows by \cite[Proposition 3.2]{niefield1995algebraic}. 


\textit{(3)}$\Rightarrow$\textit{(1)} It follows from Proposition \ref{sdmlasano}.
\end{proof}

\begin{cor}
Let $R$ be a Noetherian Asano prime ring. Then, $\Lambda^{fi}(R)$ is distributive. 
\end{cor}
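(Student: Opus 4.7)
The plan is to obtain the distributivity of $\Lambda^{fi}(R)$ as an immediate consequence of the strong algebraic De Morgan law, combining Theorem \ref{asano} with Proposition \ref{sdmls}(2).

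First, I would invoke Theorem \ref{asano}: since $R$ is a Noetherian Asano prime ring, condition (1) of that theorem holds, and by the chain of implications $(1)\Rightarrow(2)\Rightarrow(3)\Leftrightarrow(5)$ already established there, the SDML2 condition is valid in $\Lambda^{fi}(R)$, namely
\[A:(B\cap C)=(A:B)+(A:C)\]
for all $A,B,C\in\Lambda^{fi}(R)$.

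Next, I would verify that the hypotheses of Proposition \ref{sdmls}(2) are satisfied by taking $M={}_RR$. The regular left module ${}_RR$ is projective, hence \emph{a fortiori} quasi-projective. Moreover, every two-sided ideal $I$ of $R$, i.e., every fully invariant submodule of ${}_RR$, can be written as $I=\sum_{x\in I}Rx$, and so it is an epimorphic image of $R^{(I)}$; thus $R$ generates each of its fully invariant submodules. With both hypotheses in place, Proposition \ref{sdmls}(2) applies directly and yields that $\Lambda^{fi}(R)$ is distributive.

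I do not anticipate any genuine obstacle in this argument: Theorem \ref{asano} does the nontrivial work of translating the Asano condition into SDML2, and Proposition \ref{sdmls}(2) then delivers distributivity under conditions that are automatic for the regular module. The only place where one must be a little careful is in checking that fully invariant submodules of ${}_RR$ are precisely two-sided ideals and that each of them is $R$-generated in the sense required by Proposition \ref{dospuntos}(5), but both facts are standard.
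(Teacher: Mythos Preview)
Your proof is correct and matches the paper's own argument: the paper simply cites Theorem \ref{asano} to obtain SDML2 and then Proposition \ref{sdmls} to conclude distributivity. Your additional verification that $M={}_RR$ is quasi-projective and generates its two-sided ideals makes explicit the hypotheses of Proposition \ref{sdmls}(2) that the paper leaves implicit.
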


\begin{proof}
By Theorem \ref{asano}, $R$ satisfies SDML2 and by Proposition \ref{sdmls}, $\Lambda^{fi}(R)$ is distributive.
\end{proof}

By Remark \ref{commhyp} and Theorem \ref{asano} we get the following corollary which is part of \cite[Theroem]{niefield1985strong}.

\begin{cor}
The following are equivalent for a commutative Noetherian domain $R$.
\begin{enumerate}
\item[(1)] $R$ is a Dedekind domain;
\item[(2)] $(A:B)+(B:A)=R$, for all $A,B\in \Lambda(R)$;
\item[(3)] $(A+B):C=(A:C)+(B:C)$, for all $A,B,C\in \Lambda(R)$;
\item[(4)] $A:(B\cap C)=(A:B)+(A:C)$, for all $A,B,C\in \Lambda(R)$.
\item[(5)] $\Lambda(R_P)$ is totally ordered, for every prime ideal $P$ of $R$, where $R_P$ denotes the localization of $R$ at $P$.
\end{enumerate}
\end{cor}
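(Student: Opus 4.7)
The plan is to derive this corollary directly from Theorem \ref{asano} together with Remark \ref{commhyp}, checking that the commutative setting simplifies the hypotheses so that all the auxiliary conditions become automatic.

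First I would observe that for a commutative ring $R$, every submodule of $R$ (as an $R$-module over itself) is an ideal, and since $\End_R(R)\cong R$ acts by multiplication, every ideal is fully invariant. Hence $\Lambda(R)=\Lambda^{fi}(R)$, and likewise $\Lambda(R_P)=\Lambda^{fi}(R_P)$ for every prime $P$. Moreover, the product of ideals in a commutative ring is trivially commutative, so the hypothesis ``the product in $\Lambda^{fi}(R)$ is commutative'' appearing in Theorem \ref{asano}(3)-(5) is free. By Remark \ref{commhyp}, the additional finite generation hypothesis at the end of Theorem \ref{asano} also holds automatically for a commutative Noetherian domain (using \cite[Corollary 3.3.7]{mcconnell2001noncommutative}). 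Consequently, Theorem \ref{asano} applies in full strength and conditions (2), (3), (4) here are literally conditions (3), (4), (5) of Theorem \ref{asano}, while condition (5) here is (the relevant part of) condition (2) of Theorem \ref{asano}.

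Next I would identify ``Asano prime'' with ``Dedekind'' in the commutative setting. A commutative Noetherian domain is prime Goldie, and by the equivalent characterizations in \cite[5.2.6]{mcconnell2001noncommutative}, being Asano means every nonzero ideal is invertible, which is precisely the classical definition of a Dedekind domain. This gives $(1)\Leftrightarrow$(Asano prime), so Theorem \ref{asano} yields $(1)\Rightarrow(5)\Rightarrow(2)\Leftrightarrow(3)\Leftrightarrow(4)$ and, using the finite-generation hypothesis that is now automatic, $(2)\Rightarrow(1)$.

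What remains is just a small bookkeeping step: the statement of Theorem \ref{asano}(2) asserts not only that $\Lambda^{fi}(R_P)$ is totally ordered but also that each ideal of $R_P$ is a power of $J(R_P)$ and that every prime ideal of $R$ is localizable; conversely, condition (5) in the corollary only requires that $\Lambda(R_P)$ is totally ordered. In the commutative case localizability is automatic, so the implication $(1)\Rightarrow(5)$ follows trivially from Theorem \ref{asano}$(1)\Rightarrow(2)$ by forgetting extra information; and in the other direction $(5)\Rightarrow(2)$, total order of $\Lambda(R_P)$ forces $(A^e\colon B^e)=R_P$ or $(B^e\colon A^e)=R_P$ at every maximal $P$, exactly as in the proof of $(2)\Rightarrow(3)$ of Theorem \ref{asano}, so the same argument carries (5) to (2) here.

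The only genuine step in all of this is recognizing that no obstacle arises: each additional hypothesis in Theorem \ref{asano} is either automatic in commutative algebra or is supplied by Remark \ref{commhyp}. I would therefore present the proof as two short observations (commutativity of the ideal product and of $\Lambda^{fi}=\Lambda$; finite two-sided generation from Remark \ref{commhyp}) followed by a direct appeal to Theorem \ref{asano}, identifying Asano prime with Dedekind via \cite[5.2.6]{mcconnell2001noncommutative}.
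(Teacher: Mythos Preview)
Your proposal is correct and takes essentially the same approach as the paper: the paper's proof is simply the one-line remark ``By Remark \ref{commhyp} and Theorem \ref{asano} we get the following corollary,'' and your write-up just unpacks the identifications ($\Lambda=\Lambda^{fi}$, commutative ideal product, Asano $=$ Dedekind) that make that citation work.
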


\section{Acknowledgements}
Part of this investigation was made on a visit of the first author to the Universidad de Guadalajara, he wishes to thank the members of the Department of Mathematics for their kind hospitality. This visit was supported by the program PROSNI 2019 of the third author. The second author thanks  project  {\it PRODEP PTC-2019 grant UAM-PTC-700 Num. 12613411 awarded by SEP.} 

The documents \cite{simmonsintroduction}, \cite{simmonsvarious} that we mentioned in the manuscript were available on the author's personal web page. Unfortunately these references are not available anymore.
\bibliographystyle{amsalpha}
\bibliography{BNota}

\scriptsize
 \vskip5mm
\begin{itemize}\item	 \textit{Mauricio Medina-B\'arcenas$^{\ast}$, }
Facultad de Ciencias F\'isico-Matem\'{a}ticas, Benem\'erita Universidad Aut\'{o}noma de Puebla. Av. San Claudio y 18 Sur, Col. San Manuel, Ciudad Universitaria, 72570, Puebla, M\'exico

\item \textit{Martha Lizbeth Shaid Sandoval Miranda$^{(\ddag)}$ }
Departamento de Matem\'aticas, Universidad Aut\'{o}noma Metropolitana - Unidad Iztapalapa, Av. San Rafael Atlixco 186, Col. Vicentina
Iztapalapa, 09340, Ciudad de M\'exico, M\'exico. 
\item \textit{\'Angel Zald\'ivar Corichi$^{\dag}$.}
Departamento de Matem\'aticas, Centro Universitario de Ciencias Exactas e Ingenier\'ias, Universidad de Guadalajara, Blvd. Marcelino Garc\'ia Barrag\'an, 44430, Guadalajara, Jalisco, M\'exico.\vspace{5pt}
\end{itemize}
\begin{itemize}
\item[\hspace{33pt}] $\ast$ mmedina@fcfm.buap.mx
\item[\hspace{33pt}] $\ddag$ marlisha@xanum.uam.mx
\item[\hspace{33pt}]  $\dag$ luis.zaldivar@academicos.udg.mx (Corresponding author).
\end{itemize}

\end{document}